\newtheorem{thm}{Theorem}[section]
\newtheorem{prop}[thm]{Proposition}
\newtheorem{lem}[thm]{Lemma}
\newtheorem{cor}[thm]{Corollary}
\numberwithin{equation}{section}
\theoremstyle{definition}
\newtheorem{definition}[thm]{Definition}
\newtheorem{remark}[thm]{Remark}
\newtheorem{ex}[thm]{Example}
\DeclareMathOperator{\im}{im} 
\DeclareMathOperator{\cok}{coker} 
\DeclareMathOperator{\spec}{Spec} 
\newcommand{\iso}{\cong}
\newcommand{\id}{\mathrm{id}}
\newcommand{\dual}{^{\vee}} 
\newcommand{\comp}{\circ} 
\newcommand{\kcomp}{\star} 
\newcommand{\mor}[1]{\xrightarrow{#1}}
\newcommand{\mono}{\hookrightarrow} 
\newcommand{\epi}{\twoheadrightarrow} 
\newcommand{\isomor}{\mor{\sim}} 
\newcommand{\rest}[1]{|_{#1}} 
\newcommand{\cp}[1]{#1^{\bullet}} 
\newcommand{\K}{\Bbbk} 
\newcommand{\cat}[1]{{\mathbf{#1}}} 
\newcommand{\opp}{^{\circ}} 
\newcommand{\Hom}{\mathrm{Hom}}
\newcommand{\Ext}{\mathrm{Ext}}
\newcommand{\tot}{\mathrm{tot}} 
\newcommand{\dg}{\mathrm{dg}}
\newcommand{\FM}[2][]{\Phi^{#1}_{#2}} 
\newcommand{\FMdg}[1]{\FM[\dg]{#1}} 
\newcommand{\FMS}[1]{\FM[\mathrm{s}]{#1}}
\newcommand{\D}[1][]{\mathrm{D}^{#1}} 
\newcommand{\Db}{\D[b]} 
\newcommand{\Dp}[1][]{\cat{Perf}_{#1}} 
\newcommand{\Dg}{\D[\dg]} 
\newcommand{\dgD}{\mathcal{D}} 
\newcommand{\Perf}{\mathrm{Perf}^{\,\dg}} 
\newcommand{\SF}{\mathrm{SF}} 
\newcommand{\SFfg}{\mathrm{SF_{fg}}} 
\newcommand{\rep}{\mathrm{rep}} 
\newcommand{\Ac}{\ka c} 
\newcommand{\Cdg}{\mathrm{C}^{\dg}} 
\newcommand{\Acdg}{\mathrm{Ac}^{\dg}} 
\newcommand{\rd}{\mathbf{R}} 
\newcommand{\ld}{\mathbf{L}} 
\newcommand{\lotimes}{\overset{\ld}{\otimes}} 
\newcommand{\fun}[1]{\mathsf{#1}} 
\newcommand{\dgfun}[1]{\mathsf{#1}^{\dg}} 
\newcommand{\Mod}[1]{\mathrm{Mod}\text{-}#1} 
\newcommand{\dgMod}[1]{\cal{M}od\text{-}#1} 
\newcommand{\Coh}{\cat{Coh}}
\newcommand{\Qcoh}{\cat{Qcoh}}
\newcommand{\p}{\mathrm{p}} 
\newcommand{\lto}{\longrightarrow}
\newcommand{\Hqe}{\mathbf{Hqe}}
\newcommand{\dgYon}[1][]{\fun{Y}^{#1}} 
\newcommand{\dYon}[1][]{\fun{Y}^{#1}} 
\newcommand{\newrho}{\fun{G}}
\newcommand{\Cc}{(\ast)}
\newcommand{\Ca}{(\vartriangle)}
\newcommand{\Cb}{(\diamond)}
\newcommand{\Ob}{\mathrm{Ob}}
\newcommand{\cal}{\mathcal}
\newcommand{\ka}{{\cal A}}
\newcommand{\kb}{{\cal B}}
\newcommand{\kc}{{\cal C}}
\newcommand{\kd}{{\cal D}}
\newcommand{\ke}{{\cal E}}
\newcommand{\kf}{{\cal F}}
\newcommand{\kg}{{\cal G}}
\newcommand{\kk}{{\cal K}}
\newcommand{\ki}{{\cal I}}
\newcommand{\kl}{{\cal L}}
\newcommand{\km}{{\cal M}}
\newcommand{\ko}{{\cal O}}
\newcommand{\NN}{\mathbb{N}}
\newcommand{\ZZ}{\mathbb{Z}}
\newcommand{\RR}{\mathbb{R}}
\newcommand{\CC}{\mathbb{C}}
\newcommand{\PP}{\mathbb{P}}
\newcommand{\Ind}{\fun{Ind}}
\newcommand{\Res}{\fun{Res}}
\begin{document}

	\title[Fourier--Mukai functors in the supported case]{Fourier--Mukai functors in the supported case}

	\author{Alberto Canonaco and Paolo Stellari}

	\address{A.C.: Dipartimento di Matematica ``F. Casorati'', Universit{\`a}
	degli Studi di Pavia, Via Ferrata 1, 27100 Pavia, Italy}
	\email{alberto.canonaco@unipv.it}

	\address{P.S.: Dipartimento di Matematica ``F.
	Enriques'', Universit{\`a} degli Studi di Milano, Via Cesare Saldini
	50, 20133 Milano, Italy}
	\email{paolo.stellari@unimi.it}
 \urladdr{http://users.unimi.it/stellari}

\thanks{A.~C.~ is partially supported by the national research project
  ``Moduli, strutture geometriche e loro applicazioni'' (PRIN 2009).
P.~S.~ is partially supported by the grants FIRB 2012 ``Moduli Spaces and Their Applications'' and
the national research project ``Geometria delle Variet\`a Proiettive'' (PRIN 2010-11).}

	\keywords{Derived categories, Supported sheaves, Fourier--Mukai
	functors}

	\subjclass[2010]{14F05, 18E10, 18E30}
	
		\begin{abstract}
                  We prove that exact functors between the categories
                  of perfect complexes supported on projective schemes
                  are of Fourier--Mukai type if the functor satisfies
                  a condition weaker than being fully faithful. We
                  also get generalizations of the results in the
                  literature in the case without support conditions. Some
                  applications are discussed and, along the way, we
                  prove that the category of perfect supported complexes has a
                  strongly unique enhancement.
		\end{abstract}

		\maketitle

	\section{Introduction}\label{Intro}

One of the most intriguing open questions in the theory of derived categories is whether all exact functors between the categories of perfect complexes (or between the bounded derived categories of coherent sheaves) on projective schemes are of Fourier--Mukai type.
It might be worth recalling that, if $X_1$ and $X_2$ are projective schemes, an exact functor
$\fun{F}\colon\Dp(X_1)\to\Dp(X_2)$ between the corresponding categories of
perfect complexes is a \emph{Fourier--Mukai functor} (or of
\emph{Fourier--Mukai type}) if there exists $\ke\in\Db(X_1\times X_2)$
and an isomorphism of exact functors $\fun{F}\iso\FM{\ke}$. Here
$\FM{\ke}\colon\Dp(X_1)\to\Dp(X_2)$ is the exact functor defined by
\[
\FM{\ke}:=\rd(p_2)_*(\ke\lotimes p_1^*(-)),
\]
where $p_i\colon X_1\times X_2\to X_i$ is the natural projection. The complex $\ke$ is called a \emph{kernel} of $\fun{F}$.

While, in general, the kernel is certainly not unique (up to
isomorphism) due to \cite{CS1}, the question about the existence of
such kernels is widely open. Indeed, despite the fact that a
conjecture in \cite{BLL} would suggest a positive answer to it, for
the time being, only partial results in this direction are
available. Let us recall some of them. In \cite{Or} (together with
\cite{BB}) the case of exact fully faithful functors between the
bounded derived categories of coherent sheaves on smooth projective
varieties is completely solved by Orlov. Various generalizations to
quotient stacks and twisted categories were given in \cite{Ka} by
Kawamata and in \cite{CS} respectively. In particular, the main result
of \cite{CS} shows that all exact functors
$\fun{F}\colon\Db(X_1)\to\Db(X_2)$ such that
	\begin{equation}\label{eqn:cCS}
		\Hom_{\Db(X_2)}(\fun{F}(\ka),\fun{F}(\kb)[k])=0,
	\end{equation}
for any $\ka,\kb\in\Coh(X_1)$ and any integer $k<0$, are Fourier--Mukai functors and their kernels are unique, up to isomorphism.

\medskip

The inspiration for our results in this paper comes from the new
approach to the representability problem in \cite{LO}, where the
authors show that all exact fully faithful functors
$\fun{F}\colon\Dp(X_1)\to\Dp(X_2)$ between the categories of perfect
complexes on the projective schemes $X_1$ and $X_2$ are of
Fourier--Mukai type. To show this, Lunts and Orlov prove that such
fully faithful functors admit dg lifts. At that point, they can invoke
the representability result in \cite{To}. Indeed, To\"en proved that,
in the dg setting, all morphisms in the localization of the category of dg categories by quasi-equivalences are of Fourier--Mukai
type (in an appropriate dg sense). Notice that the strategy in \cite{LO} allows the authors to
improve the results in \cite{Ba1}.

To make clear the categorical setting we are going to work with, let
$X_1$ be a quasi-projective scheme containing a projective subscheme
$Z_1$ such that the structure sheaf $\ko_{iZ_1}$ of the $i$-th
infinitesimal neighbourhood of $Z_1$ in $X_1$ is in $\Dp(X_1)$,
for every $i>0$. This last condition is verified for instance when
either $Z_1=X_1$ or $X_1$ is smooth. Moreover let $X_2$ be a separated
scheme of finite type over the base field $\K$ with a closed subscheme $Z_2$. One can then consider the categories $\Dp[Z_i](X_i)$ of perfect complexes on $X_i$ with cohomology sheaves supported on $Z_i$. The definition of Fourier--Mukai functor makes perfect sense also in this context (see Definition \ref{def:FMs}).

\medskip

A rewriting of \eqref{eqn:cCS} in the supported setting which weakens the fully-faithfulness condition in \cite{LO,Or} requires a bit of care. Indeed, assuming $X_1$, $X_2$, $Z_1$ and $Z_2$ to be as above, one can consider exact functors $\fun{F}\colon\Dp[Z_1](X_1)\to\Dp[Z_2](X_2)$ such that

\medskip

\parbox{17pt}{$\Cc$}
\parbox{436pt}{\begin{itemize}
\item[(1)] \emph{$\Hom(\fun{F}(\ka),\fun{F}(\kb)[k])=0$, for
any $\ka,\kb\in\Coh_{Z_1}(X_1)\cap\Dp[Z_1](X_1)$ and any
integer $k<0$;
\item[(2)] For all $\ka\in\Dp[Z_1](X_1)$ with trivial
cohomologies in positive degrees, there is
$N\in\ZZ$ such that
\[
\Hom(\fun{F}(\ka),\fun{F}(\ko_{|i|Z_1}(jH_1)))=0,
\]
for any $i<N$ and any $j\ll i$, where $H_1$ is an ample (Cartier)
divisor on $X_1$.}
\end{itemize}}

\medskip

At first sight this condition may look a bit involved, but if
$Z_1=X_1$ is smooth with $\dim(X_1)>0$, then part $(2)$ of $\Cc$ is redundant and thus
$\Cc$ turns out to be equivalent to \eqref{eqn:cCS} (see Proposition
\ref{prop:smooth}).  In general full functors always satisfy $\Cc$, if we assume further that the maximal $0$-dimensional torsion subsheaf $T_0(\ko_{Z_1})$ of
$\ko_{Z_1}$ is trivial. Actually, due to \cite{COS}, a non-trivial full functor is automatically faithful if
$Z_1$ is connected. We will discuss in Section \ref{subsec:smoothex}
the existence of non-full functors with property $\Cc$.

We are now ready to state our first main result.

\begin{thm}\label{thm:main2}
Let $X_1$ be a quasi-projective scheme containing a projective
subscheme $Z_1$ such that $\ko_{iZ_1}\in\Dp(X_1)$, for all $i>0$, and let
$X_2$ be a separated scheme of finite type over the base field $\K$ with a
closed subscheme $Z_2$. Let
\[
\fun{F}\colon\Dp[Z_1](X_1)\lto\Dp[Z_2](X_2)
\]
be an exact functor.

If $\fun{F}$ satisfies $\Cc$, then there exist
$\ke\in\Db_{Z_1\times Z_2}(\Qcoh(X_1\times X_2))$ and an isomorphism
of functors $\fun{F}\iso\FMS{\ke}$. Moreover, if $X_i$ is smooth
quasi-projective, for $i=1,2$, and $\K$ is perfect, then $\ke$ is
unique up to isomorphism.
\end{thm}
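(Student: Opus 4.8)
The plan is to follow the strategy of \cite{LO}, adapted to the supported setting, which splits into two parts: first producing a dg-lift of $\fun{F}$ and then invoking To\"en's representability theorem, and second establishing uniqueness under the smoothness hypotheses. For the existence part, I would first observe that $\Dp[Z_i](X_i)$ admits a natural enhancement $\Perf_{Z_i}(X_i)$, and that by the strong uniqueness of enhancements for $\Dp[Z_i](X_i)$ (proved in the paper, as mentioned in the abstract), any two such enhancements are isomorphic in a way compatible with the triangulated structure. The key technical step is to promote $\fun{F}$ to a dg-quasi-functor between these enhancements. Following \cite{LO}, the idea is to restrict attention to a suitable admissible (pretriangulated) subcategory generated by the building blocks $\ko_{iZ_1}(jH_1)$, show that $\fun{F}$ restricted there is determined by its action on a tilting-type object or an appropriate ample sequence, and then use the condition $\Cc$ to control the negative $\Hom$'s that obstruct lifting. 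Once the dg-lift exists, \cite{To} immediately yields a kernel $\ke$ living on the product, which one identifies with an object of $\Db_{Z_1\times Z_2}(\Qcoh(X_1\times X_2))$ by tracking supports; the resulting functor is $\FMS{\ke}$ by construction.

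For the uniqueness statement, I would assume $X_1$, $X_2$ smooth quasi-projective and $\K$ perfect, and suppose $\FMS{\ke}\iso\FMS{\ke'}$ as functors $\Dp[Z_1](X_1)\to\Dp[Z_2](X_2)$. The strategy is the classical one: evaluate both sides on the structure sheaves (or twists) of points and infinitesimal neighbourhoods of $Z_1$, and use that these objects, together with their images under $p_1^*$ followed by tensoring, detect $\ke$ up to isomorphism. More precisely, in the smooth quasi-projective case one has enough room to apply a version of the ``convolution'' or ``descent'' argument: the set of objects $\{\ko_{iZ_1}(jH_1)\}$ spans $\Dp[Z_1](X_1)$, the diagonal-type resolution techniques work over $X_1\times X_2$, and one reduces to showing that an object of $\Db_{Z_1\times Z_2}(\Qcoh(X_1\times X_2))$ with vanishing Fourier--Mukai transform is zero. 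The perfectness of $\K$ enters to guarantee that base change and the identification of $\RHom$ groups behave well, and that the relevant spanning/ample-sequence arguments (in the spirit of \cite{Or}) go through without separability pathologies.

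The main obstacle I anticipate is the dg-lifting step under the weakened hypothesis $\Cc$ rather than full faithfulness. In \cite{LO} full faithfulness is used crucially to identify morphism complexes on the nose; here one only has the one-sided vanishing in $\Cc(1)$ together with the auxiliary asymptotic vanishing $\Cc(2)$, so the construction of the quasi-functor must be done by hand on the subcategory generated by the ample sequence $\ko_{|i|Z_1}(jH_1)$, checking that the higher coherences (the $A_\infty$-type data) can be chosen compatibly. Condition $\Cc(2)$ is precisely what is needed to kill the potential obstructions coming from morphisms into the building blocks in very negative twists, so the heart of the argument is a careful bookkeeping of these $\Ext$-vanishings along the lines of the analogous step in \cite{CS}, now carried out at the dg-level. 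A secondary subtlety is ensuring that the kernel produced by \cite{To} — which a priori lives in a category of dg-modules — is genuinely represented by a bounded complex of quasi-coherent sheaves with the correct support in $Z_1\times Z_2$; this requires the properness of $Z_2$ over $\K$ and the hypothesis $\ko_{iZ_1}\in\Dp(X_1)$ to control boundedness and coherence of the cohomology sheaves.
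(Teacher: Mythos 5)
Your existence strategy is broadly aligned with the paper's, but you misidentify where each piece of condition $\Cc$ enters, and the uniqueness argument takes a route that does not work.

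On existence: the paper does not carry out any $A_\infty$-coherence bookkeeping. The dg-lift uses only $\Cc(1)$: since negative self-$\Hom$'s of objects in the image of the weakly ample set $\cat{Amp}(Z_1,X_1,H_1)$ vanish, the truncation functor $p\colon\tau_{\le 0}\cat{B}\to H^0(\cat{B})$ is a quasi-equivalence, and one imports the semi-free module machinery of \cite{LO} verbatim. Condition $\Cc(2)$ enters \emph{afterwards}, at the purely triangulated level, in the extension criterion (Proposition \ref{prop:extending1} / Corollary \ref{cor:critiso2}): it is what allows one to extend the isomorphism $\fun{F}\rest{\cat{A}}\iso H^0(\dgfun{F})\comp\varphi\rest{\cat{A}}$ from the weakly ample set to all of $\Dp[Z_1](X_1)$, by an induction on cohomological length where at each step one needs a vanishing $\Hom(\fun{F}(C),\fun{F}(P_i))=0$ for suitably negative $i$. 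You should also note that invoking \emph{strong} uniqueness of enhancements (your Theorem \ref{thm:main1}) is not available here, since that result requires $T_0(\ko_{Z_1})=0$, which Theorem \ref{thm:main2} does not assume; the paper only uses plain uniqueness of the enhancement of $\D_{Z_1}(\Qcoh(X_1))$ (Lemma \ref{lem:uniqueenhance}). Finally, ``tracking supports'' undersells the mechanism for landing the kernel in $\Db_{Z_1\times Z_2}(\Qcoh(X_1\times X_2))$: To\"en's theorem only applies to the non-supported $\Dg(X_i)$, so one must pass through the composition $\iota\comp\dgfun{F}_2\comp\iota^!$, apply To\"en there, and then return to the supported category by $(\iota\times\iota)^!$ (Lemma \ref{lem:kernel1}); boundedness of the cohomology then comes from a compact-generator argument (Lemma \ref{lem:boundedcohom}), not from coherence of the pieces.

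On uniqueness: your proposed reduction to ``an object of $\Db_{Z_1\times Z_2}(\Qcoh(X_1\times X_2))$ with vanishing Fourier--Mukai transform is zero'' is both not what the paper does and false as stated -- if it held one would get uniqueness of kernels unconditionally, contradicting \cite{CS1}. The actual argument is a convolution argument in the spirit of \cite{CS}: set $\kf_i=(\iota\times\iota)\ke_i$, consider $\Phi_i:=\FM{\ko_\Delta\boxtimes\kf_i}$, and resolve the Fourier--Mukai kernel $\ki$ of the identity on $\Db_{Z_1}(X_1)$ by a bounded-above complex whose terms are external products $P_j\boxtimes M_j$ of (direct sums of) objects from $\cat{Amp}(Z_1,X_1,H_1)$. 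Since $\Phi_i(\ki)\iso\kf_i$ and $\Phi_i$ sends each term of the resolution to $\iota P_j\boxtimes\FM{\kf_i}(\iota M_j)$, condition $\Cc$ supplies exactly the $\Hom$-vanishings needed to apply Lemmas \ref{lem:conv1} and \ref{lem:conv2} and obtain an isomorphism between the two right convolutions; boundedness of $\Phi_i$ (Lemma \ref{lem:bound}, using $\K$ perfect) kills the spurious $\kk_i[m]$ summands for $m\gg 0$, yielding $\kf_1\iso\kf_2$ and hence $\ke_1\iso\ke_2$. Without this convolution comparison the uniqueness claim does not go through.
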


This result summarizes the content of Proposition \ref{prop:rep1},
Lemma \ref{lem:boundedcohom} and Proposition \ref{prop:uniq}. The
proof is contained in Sections \ref{sec:existence} and
\ref{sect:uniqueness} and it uses the approach via dg categories
proposed in \cite{LO}. Clearly, assuming $X_i=Z_i$ for $i=1,2$, our result extends the one in \cite{LO}
about singular projective schemes (see Corollary \ref{cor:experf}). Notice that the symbol $\FMS{\ke}$
stands for the `supported' Fourier--Mukai functor with kernel $\ke$
defined precisely in \eqref{eqn:FMdef}.

\smallskip

Our second main result concerns the uniqueness of the enhancement for
the category of perfect supported complexes mentioned above and it is
proved in Section \ref{subsec:enh}.

\begin{thm}\label{thm:main1}
Let $X$ be a quasi-projective scheme containing a projective
subscheme $Z$ such that $\ko_{iZ}\in\Dp(X)$, for all $i>0$, and
$T_0(\ko_{Z})=0$. Then $\Dp[Z](X)$ has a strongly unique enhancement.
\end{thm}

The notion of enhancement and its strong uniqueness is discussed in
Section \ref{subsec:dg}. For the moment we can roughly think of an
enhancement of $\Dp[Z](X)$ as a (pretriangulated) dg category whose
homotopy category is equivalent to $\Dp[Z](X)$. The enhancement is
strongly unique if two such are (quasi-)equivalent at the dg category
level and such an equivalence satisfies some additional condition. It
is worth noticing that the particular case $X=Z$ is one of the main
results in \cite{LO} (see Corollary \ref{cor:LO1}).

\medskip

\noindent{\bf Motivations.} Due to the technical nature of Theorems \ref{thm:main2} and \ref{thm:main1}, some geometric motivations are certainly in order here. From our point of view the reason for studying exact functors between categories with support conditions is two-fold. On one side the conjecture in \cite{BLL} concerning the fact that all `geometric' functors are of Fourier--Mukai type appears extremely difficult to prove in complete generality. Thus it makes sense to test its validity weakening the assumptions on the geometric nature of the triangulated categories involved and on the exact functors between them. In this sense, this paper is in the same spirit as \cite{CS} and \cite{LO}.

\smallskip

On the other hand, one would like to study easy-to-handle
\emph{$d$-Calabi--Yau categories}, i.e.\ triangulated categories whose
Serre functor is isomorphic to the shift by the positive integer
$d$. Challenging examples are certainly provided by the
derived categories of smooth projective Calabi--Yau
threefolds. Indeed, the homological version of the Mirror Symmetry
conjecture \cite{Ko} for those threefolds involves these categories with implications for the
manifolds parametrizing stability conditions \cite{Br1} into which (up
to the quotient by the group of autoequivalences) the K\"{a}hler
moduli spaces embed. One big open problem in this direction is the
lack of examples of stability conditions for Calabi--Yau threefolds.

The group of autoequivalences of the derived category, besides being an interesting algebraic object in itself, acts on the stability manifold. Already for Calabi--Yau manifolds of dimension $2$ (i.e.\ K3 surfaces), this group is very complicated and one of the main motivations of \cite{Or} is to stimulate its study. As for stability conditions, in higher dimension the situation becomes much more involved.

Therefore, following suggestions from the physics literature, one may
start from the non-compact or the so called `open' Calabi--Yau's. Let
us be more precise discussing some explicit examples where the ambient
space $X_1$ is smooth and Theorem \ref{thm:main2} (or a variant of it) applies.

\smallskip

Following \cite{FY} and \cite{KYZ}, one can consider the triangulated
category $\cat{T}_S$ classically generated by a $d$-spherical object $S$ (here $d$
is a positive integer) in an idempotent complete triangulated category
$\cat{T}$. An object $S$ is $d$-spherical if the graded
algebra $\Ext^*(S,S)$ is isomorphic to the cohomology of a
$d$-sphere. We will study this example in Section
\ref{subsec:spherical} when $d=1$ as in this case $\cat{T}_S$ is
nothing but $\Db_\p(C)$, where $C$ is a smooth curve and $\p\in C$ is
a $\K$-rational point. Thus we obtain the following result, which is a
particular case of Proposition \ref{prop:spherical}.

\begin{prop}\label{prop:sph}
	Every exact autoequivalence of $\cat{T}_S$ is of Fourier--Mukai type if $S$ is a $1$-spherical object.
\end{prop}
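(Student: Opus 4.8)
The plan is to reduce the statement to a property of exact autoequivalences of a concrete supported category, and then to run the proof of Theorem \ref{thm:main2} in a slightly modified form. By the discussion in Section \ref{subsec:spherical}, a $1$-spherical object $S$ has derived endomorphism dga quasi-isomorphic to $\RHom(\ko_\p,\ko_\p)$ for a smooth curve $C$ and a closed point $\p\in C$, and $\ko_\p$ generates $\Db_\p(C)$, as a thick triangulated subcategory of $\Dp(C)=\Db(C)$; hence $\cat{T}_S\simeq\Dp[\{\p\}](C)=\Db_\p(C)$, and an exact autoequivalence $\fun{F}$ of $\cat{T}_S$ becomes an exact autoequivalence of $\Db_\p(C)$. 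One should note that the hypotheses of Theorems \ref{thm:main2} and \ref{thm:main1} genuinely fail in this example: $T_0(\ko_\p)=\ko_\p\neq0$, and part $(2)$ of $\Cc$ cannot hold because $\ko_{|i|\p}(jH_1)\iso\ko_{|i|\p}$ for all $i,j$ (the twisting sheaf is a line bundle, trivial near $\p$) while full faithfulness of $\fun{F}$ forces $\Hom(\fun{F}(\ko_\p),\fun{F}(\ko_{|i|\p}))\iso\Hom(\ko_\p,\ko_{|i|\p})\neq0$ for every $i<0$. This is the ``variant'' of Theorem \ref{thm:main2} referred to in the introduction.

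The key point is that for an \emph{equivalence} the only input really needed to conclude is that the category carries a strongly unique enhancement, and for $\Db_\p(C)$ this can be verified directly, bypassing the hypothesis $T_0(\ko_Z)=0$ of Theorem \ref{thm:main1}. Indeed $\Db_\p(C)$ is generated, as a thick triangulated subcategory, by the single object $\ko_\p$, whose graded endomorphism algebra $\End^{*}(\ko_\p)$ is $\K[\epsilon]/(\epsilon^2)$ with $\epsilon$ in degree $1$; as this is concentrated in degrees $0$ and $1$, its reduced part lies in degree $1$, so a strict unit together with the fact that $m_n$ has degree $2-n$ forces $m_n=0$ for all $n\geq3$ (such a product would take values in degree $2$, where the cohomology vanishes). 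Hence $\RHom(\ko_\p,\ko_\p)$ is formal, every enhancement of $\Db_\p(C)$ is equivalent to $\Perf(\K[\epsilon]/(\epsilon^2))$, and a small adaptation of the arguments in Sections \ref{subsec:almostample} and \ref{sec:extending} upgrades this to strong uniqueness in the sense of Section \ref{subsec:dg}.

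Granting this, the argument concludes as in Sections \ref{sec:existence} and \ref{subsect:supportunique}: since $\fun{F}$ is an equivalence, strong uniqueness of the enhancement produces a dg-autoequivalence of the enhancement of $\Db_\p(C)$ lifting $\fun{F}$, and the supported version of To\"en's representability theorem established in Section \ref{sec:existence} then shows that this dg-autoequivalence --- hence $\fun{F}$ itself --- is isomorphic to $\FMS{\ke}$ for some $\ke\in\Db_{\{\p\}\times\{\p\}}(\Qcoh(C\times C))$; transporting along $\cat{T}_S\simeq\Db_\p(C)$ yields the assertion. I expect the main obstacle to lie in the second paragraph: checking that the representability result and the lifting step of Sections \ref{sec:existence}--\ref{sec:extending} really do go through with ``$\Db_\p(C)$ has a strongly unique enhancement'' as a black-box substitute for condition $\Cc$, and, within that, securing \emph{strong} rather than merely ordinary uniqueness --- the latter being the easy formality computation above, the former requiring one to carry the extra compatibility condition through the whole argument.
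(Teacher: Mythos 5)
There is a genuine gap exactly where you flag the obstacle, and it is not small. Your formality computation for $\K[\epsilon]/(\epsilon^2)$ with $\abs{\epsilon}=1$ is correct, and plausibly yields \emph{ordinary} uniqueness of the enhancement of $\Db_\p(C)$ (though even that requires translating intrinsic formality of the generator's $A_\infty$-algebra into the Lunts--Orlov formalism). But to produce a dg-lift of a \emph{given} autoequivalence $\fun{F}$ you genuinely need \emph{strong} uniqueness: only strong uniqueness, applied to the two enhancements $(\cat{A},\alpha)$ and $(\cat{A},\fun{F}^{-1}\comp\alpha)$, forces the resulting quasi-equivalence $\gamma$ to satisfy $\fun{F}\iso\alpha\comp H^0(\gamma)\comp\alpha^{-1}$; ordinary uniqueness gives only some quasi-autoequivalence, with no control relating it to $\fun{F}$. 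The ``small adaptation'' you gesture at is in fact the whole content of the paper's proof of Proposition \ref{prop:spherical}(ii), and that proof runs in the opposite logical direction: it first constructs the kernel and a natural isomorphism on the subcategory $\{\ko_{n\p}\}_{n>0}$ (using that this family satisfies \eqref{ample1} and \eqref{ample2} of Definition \ref{def:almostample} — enough for the dg-lift machinery of Section \ref{subsec:kernel1} and for Corollary \ref{cor:critiso1}, even though \eqref{ample3} and part $(2)$ of $\Cc$ fail), and only afterwards extracts strong uniqueness as a by-product. You propose to deduce (i) of Proposition \ref{prop:spherical} from (ii), whereas the paper derives (ii) from (i); filling your gap would require reproducing the paper's argument essentially in full.

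The load-bearing step your proposal omits entirely is the curve-specific structural fact: since $C$ is a smooth curve, every object of $\Db_\p(C)$ decomposes, essentially uniquely, as a finite direct sum of shifts of objects of $\Coh_\p(C)$. This is what allows the paper to extend the natural isomorphism $\fun{F}\rest{\cat{C}}\isomor\FMS{\ke}\rest{\cat{C}}$ first to shifts of sheaves via Corollary \ref{cor:critiso1}, and then to all of $\Db_\p(C)$ without invoking Corollary \ref{cor:critiso2} (which would need the failing part $(2)$ of $\Ca$). Without this decomposition neither the extension step in (i) nor the passage to strong uniqueness in (ii) goes through, and strong uniqueness cannot serve as a black box substitute — it is a consequence, not an input.
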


This completes the picture in \cite{FY} which provides a description of the subgroup of Fourier--Mukai autoequivalences. We should remark here that the result above is not a direct consequence of Theorem \ref{thm:main2} as the maximal $0$-dimensional torsion subsheaf of $\ko_\p$ is obviously not trivial and part (2) of $\Cc$ does not hold true.

\smallskip

Interesting examples of $2$-Calabi--Yau categories are provided by the
local resolutions of $A_n$-singularities on surfaces which were
studied in \cite{IU,IUU}. More precisely, one considers
$Y=\spec(\CC[[x,y,z]]/(x^2+y^2+z^{n+1}))$ (the $A_n$-singularity), the
minimal resolution $f\colon X\to Y$ and $Z:=f^{-1}(p)$, where $p$ is the closed point in $Y$. Notice that, in this case, $T_0(\ko_{Z})=0$. The category one wants to consider is then $\Db_Z(X)=\Dp[Z](X)$ and using Theorem \ref{thm:main2} we can reprove in a direct way the following result already contained in \cite{IUU}.

\begin{cor}\label{cor:An}
Every exact autoequivalence of $\Db_Z(X)$ is of Fourier--Mukai type.
\end{cor}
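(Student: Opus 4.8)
The plan is to deduce Corollary \ref{cor:An} directly from Theorem \ref{thm:main2} by verifying that the hypotheses are met in this geometric situation. First I would set $X_1=X_2=X$ and $Z_1=Z_2=Z$, where $f\colon X\to Y$ is the minimal resolution of the $A_n$-surface singularity $Y=\spec(\CC[[x,y,z]]/(x^2+y^2+z^{n+1}))$ and $Z=f^{-1}(p)$. Here $X$ is smooth (so the condition $\ko_{iZ}\in\Dp(X)$ holds automatically, as remarked in the text after the definition of the categorical setting), and $Y$ — hence $X$ via $f$ — is of finite type over $\K=\CC$; the exceptional fibre $Z$ is proper over $\CC$ since it is a projective curve (a chain of $n$ copies of $\PP^1$). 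Moreover $\Db_Z(X)=\Dp[Z](X)$ because $X$ is smooth, so an exact autoequivalence of $\Db_Z(X)$ is precisely an exact functor $\fun{F}\colon\Dp[Z](X)\to\Dp[Z](X)$ of the kind Theorem \ref{thm:main2} addresses.

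Next I would check that any autoequivalence satisfies condition $\Cc$. Since $X$ is smooth quasi-projective but $Z\neq X$, one cannot literally invoke Proposition \ref{prop:smooth}; instead one uses the discussion preceding Theorem \ref{thm:main2}: a full functor satisfies $\Cc$ provided the maximal $0$-dimensional torsion subsheaf $T_0(\ko_Z)$ is trivial, and the text explicitly records that $T_0(\ko_Z)=0$ in the $A_n$ case (the exceptional curve $Z$ is Cohen--Macaulay of pure dimension one, being a nodal chain of $\PP^1$'s, so $\ko_Z$ has no embedded $0$-dimensional components). An equivalence is in particular full, so part (1) and part (2) of $\Cc$ hold. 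One should briefly note why $T_0(\ko_Z)=0$: the scheme-theoretic fibre $Z=f^{-1}(p)$ is reduced with the reduced structure being a chain of $(-2)$-curves, hence has no $0$-dimensional associated primes.

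Then Theorem \ref{thm:main2} applies and yields $\ke\in\Db_{Z\times Z}(\Qcoh(X\times X))$ with $\fun{F}\iso\FMS{\ke}$; this is exactly the assertion that $\fun{F}$ is of Fourier--Mukai type in the supported sense of Definition \ref{def:FMs}. The uniqueness clause of Theorem \ref{thm:main2} also applies, since $X$ is smooth quasi-projective and $\CC$ is perfect, though uniqueness is not needed for the statement of the corollary. I would close by remarking that this recovers — via a uniform argument rather than the ad hoc analysis of \cite{IUU} — the known representability of autoequivalences of $\Db_Z(X)$.

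The only real point requiring care, and hence the main (minor) obstacle, is the verification that every autoequivalence of $\Db_Z(X)$ satisfies condition $\Cc$: one must be careful that fullness together with $T_0(\ko_Z)=0$ genuinely suffices, invoking the result from \cite{COS} alluded to in the text (that a full functor is automatically faithful when $Z$ is connected — and $Z$ here is connected, being a connected chain of $\PP^1$'s), and one must confirm the claimed vanishing $T_0(\ko_Z)=0$ from the explicit geometry of the $A_n$ resolution. Everything else is a direct substitution into Theorem \ref{thm:main2}.
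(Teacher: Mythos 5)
Your proposal is correct and matches the paper's own (unwritten) proof: Corollary \ref{cor:An} is a direct application of Theorem \ref{thm:main2}, with the hypotheses verified exactly as you describe --- smoothness of $X$ gives $\ko_{iZ}\in\Dp(X)$ and $\Db_Z(X)=\Dp[Z](X)$, $Z$ is projective, $T_0(\ko_Z)=0$, and fullness of the autoequivalence yields condition $\Cc$ via Example \ref{ex:condfun}(i). One imprecision you inherit from the paper is worth flagging: $\spec(\CC[[x,y,z]]/(x^2+y^2+z^{n+1}))$ is not of finite type over $\CC$, so to invoke Theorem \ref{thm:main2} literally one should take $Y$ to be the affine (hence quasi-projective) model $\spec(\CC[x,y,z]/(x^2+y^2+z^{n+1}))$ as in \cite{IUU}, making $X$ a genuine smooth quasi-projective scheme; your line ``$Y$ --- hence $X$ via $f$ --- is of finite type'' is false for the completed ring and should be read in that corrected sense.
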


\smallskip

Finally, to get examples of $3$-Calabi--Yau categories one can take the total space $\tot(\omega_{\PP^2})$ of the canonical bundle of $\PP^2$. In this case, if $Z$ denotes the zero section of the projection $\tot(\omega_{\PP^2})\to\PP^2$, the derived category $\Dp[Z](\tot(\omega_{\PP^2}))=\Db_Z(\tot(\omega_{\PP^2}))$ is a $3$-Calabi--Yau category and may be seen as an interesting example to test predictions about Mirror Symmetry and the topology of the space of stability conditions according to Bridgeland's definition (see \cite{BM} for results in this direction). Here again $T_0(\ko_{Z})=0$ and so Theorem \ref{thm:main2} yields the following.

\begin{cor}\label{cor:omega}
Every exact autoequivalence of $\Db_{Z}(\tot(\omega_{\PP^2}))$ is of Fourier--Mukai type.
\end{cor}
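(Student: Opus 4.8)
The plan is to derive the statement as a direct application of Theorem \ref{thm:main2}, with $X_1=X_2=X:=\tot(\omega_{\PP^2})$ and $Z_1=Z_2=Z$ equal to the zero section of the bundle projection $\tot(\omega_{\PP^2})\to\PP^2$. Thus I must check two things: that the pair $(X,Z)$ meets the geometric hypotheses of the theorem, and that an arbitrary exact autoequivalence $\fun{F}\colon\Dp[Z](X)\to\Dp[Z](X)$ satisfies condition $\Cc$. Once this is done, Theorem \ref{thm:main2} provides $\ke\in\Db_{Z\times Z}(\Qcoh(X\times X))$ together with an isomorphism of functors $\fun{F}\iso\FMS{\ke}$, which is exactly the assertion that $\fun{F}$ is of Fourier--Mukai type in the sense of Definition \ref{def:FMs}; since in addition $X$ is smooth quasi-projective over the perfect field $\CC$, the kernel $\ke$ is even unique up to isomorphism.

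For the geometric hypotheses, recall that $\tot(\omega_{\PP^2})$ is the total space of a line bundle over the smooth projective variety $\PP^2$, hence it is smooth, and it is quasi-projective because it is the open complement of the divisor at infinity inside the $\PP^1$-bundle $\PP(\omega_{\PP^2}\oplus\so_{\PP^2})\to\PP^2$, which is projective; in particular $X$ is separated and of finite type over $\CC$. The zero section $Z\iso\PP^2$ is a closed projective subscheme of $X$, so it is proper over $\CC$. Since $X$ is smooth, every bounded complex with coherent cohomology is perfect, whence $\ko_{iZ}\in\Dp(X)$ for all $i>0$ and, as recorded in the introduction, $\Dp[Z](X)=\Db_Z(X)$. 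Finally $Z$ is integral of dimension $2$, so $\ko_Z$ carries no nonzero torsion subsheaf supported in dimension $0$, that is $T_0(\ko_Z)=0$.

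It remains to verify $\Cc$ for $\fun{F}$. An exact equivalence is in particular a full functor, and, as recalled right after the statement of $\Cc$ in the introduction, every full exact functor between supported categories of perfect complexes satisfies $\Cc$ once $T_0(\ko_{Z_1})=0$; we have just checked the latter, so $\fun{F}$ satisfies $\Cc$ and the corollary follows. I do not expect a genuine obstacle in this argument: all the real work is carried by Theorem \ref{thm:main2}, and the only mildly subtle ingredient is the implication ``full and $T_0(\ko_Z)=0$'' $\Rightarrow$ $\Cc$, which is established earlier and is precisely why the vanishing $T_0(\ko_Z)=0$ was singled out in this example.
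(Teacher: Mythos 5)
Your proposal is correct and follows exactly the route the paper intends: verify that $X=\tot(\omega_{\PP^2})$ is smooth quasi-projective with $Z\iso\PP^2$ projective, deduce $\ko_{iZ}\in\Dp(X)$ and $T_0(\ko_Z)=0$, observe that a full functor then satisfies $\Cc$ (Example \ref{ex:condfun}), and apply Theorem \ref{thm:main2}. The paper gives no separate proof beyond the remark that $T_0(\ko_Z)=0$ so Theorem \ref{thm:main2} applies, and you have supplied precisely the missing verifications.
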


As an application of Proposition \ref{prop:spherical} and Theorem \ref{thm:main1}, the triangulated categories in the three examples above have strongly unique enhancements.

\medskip

\noindent{\bf The plan of the paper.} In Section \ref{sec:ample} we
provide the necessary preliminary material concerning derived
categories of supported sheaves, and we introduce the notion of an almost
ample set. Then we prove a criterion (generalizing others present in
the literature) for extending a morphism defined on a suitable subset
of the source category
between exact functors satisfying a condition related to $\Cc$. This is done in Section
\ref{sec:extending} using the notion of convolution. In Section
\ref{sec:existence} we deal with the existence of Fourier--Mukai
kernels and the strong uniqueness of enhancements. In particular, we need to generalize and to modify the argument in \cite{LO} to make it work in our setting. In the same section we also discuss the case of $1$-spherical objects. Section \ref{sect:uniqueness} deals with various questions about boundedness and uniqueness of Fourier--Mukai kernels.

\medskip

\noindent{\bf Notation.} In the paper, $\K$ is a field. All schemes
are assumed to be at least of finite type and separated over $\K$. All additive
(in particular, triangulated) categories and all additive (in
particular, exact) functors will be assumed to be $\K$-linear. If $\cat{A}$ is an abelian (or more generally an exact) category,
$\D(\cat{A})$ denotes the derived category of $\cat{A}$ and
$\Db(\cat{A})$ its full subcategory of complexes with bounded cohomology.
For an object
\[
A:=\{\cdots\to A^j\mor{d^j}A^{j+1}\mor{d^{j+1}}\cdots\mor{d^{i-1}}A^i\mor{d^i}A^{i+1}\to\cdots\}
\]
in $\D(\cat{A})$, we can consider the \emph{gentle truncations}
$\tau_{\leq i}A$, $\tau_{\geq i}A$, defined as
\begin{gather*}
\tau_{\leq i}A:=\{\cdots\to
A^j\mor{d^j}A^{j+1}\mor{d^{j+1}}\cdots\mor{d^{i-1}}\ker d^i\to0\to\cdots\}\\
\tau_{\geq i}A:=\{\cdots\to 0\to \cok d^{i-1}\to A^{i+1}\mor{d^{i+1}}\cdots\to A^j\mor{d^j}\cdots\}.
\end{gather*}
Unless clearly stated, all functors are derived even if, for simplicity, we
use the same symbol for a functor and its derived version. Natural
transformations (in particular, isomorphisms) between exact functors
are always assumed to be compatible with shifts.

\section{Preliminaries}\label{sec:ample}

The first part of this section provides a quick introduction to some basic and
well-known facts concerning the derived categories of supported
sheaves. Then we define and discuss the notion of an almost ample set.

\subsection{Categories with support conditions}\label{subsec:prelcat}

Let $X$ be a scheme and let $Z$ be
a closed subscheme of $X$. We denote by $\D_Z(\Qcoh(X))$ the derived
category of unbounded complexes of quasi-coherent sheaves on $X$ with
cohomologies supported on $Z$. We will be particularly interested in
the triangulated categories
\begin{equation}\label{eqn:categories}
\begin{split}
&\Db_Z(\Qcoh(X)):=\D_Z(\Qcoh(X))\cap\Db(\Qcoh(X))\\
&\Db_Z(X):=\D_Z(\Qcoh(X))\cap\Db(X),
\end{split}
\end{equation}
where $\Db(X):=\Db_\Coh(\Qcoh(X))$ is the full subcategory of
$\Db(\Qcoh(X))$ consisting of complexes with coherent cohomologies.
Denote by $\Dp(X)\subset\D(\Qcoh(X))$ the full subcategory of perfect
complexes on $X$.
Notice that $\Dp(X)\subseteq\Db(X)$ and, if $X$ is
quasi-projective, equality holds if and only if $X$ is regular. In the
supported case we set
\[
\Dp[Z](X):=\D_Z(\Qcoh(X))\cap\Dp(X).
\]
Thus, if $X$ is smooth, $\Dp[Z](X)=\Db_Z(X)$.

\begin{prop}\label{prop:catcomp} {\bf (\cite{R}, Theorems 5.3(i) and 6.8.)}
The category $\D_Z(\Qcoh(X))$ is compactly generated and the subcategory
of compact objects $\D_Z(\Qcoh(X))^c$ coincides with $\Dp[Z](X)$.
\end{prop}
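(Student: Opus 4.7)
The plan is, via Neeman's theorem on compactly generated triangulated categories, to reduce everything to two checks: (i) every object of $\Dp[Z](X)$ is compact in $\D_Z(\Qcoh(X))$, and (ii) $\D_Z(\Qcoh(X))$ admits a set of compact generators consisting of objects of $\Dp[Z](X)$. Once both are established, Neeman's characterization identifies $\D_Z(\Qcoh(X))^c$ with the thick closure of such a generating set, which is automatically contained in the thick subcategory $\Dp[Z](X)$, and combined with (i) this yields the equality.

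For (i), since $X$ is separated of finite type over $\K$, hence quasi-compact and quasi-separated, the theorem of Bondal--Van den Bergh--Neeman identifies the compact objects of $\D(\Qcoh(X))$ with the perfect complexes. The inclusion $\D_Z(\Qcoh(X))\hookrightarrow\D(\Qcoh(X))$ preserves arbitrary coproducts, since cohomology commutes with coproducts in $\Qcoh(X)$ and the set-theoretic support of a coproduct is the union of the individual supports, hence remains inside $Z$. Consequently any object that is compact in the ambient category and lies in $\D_Z(\Qcoh(X))$ remains compact inside the full subcategory, giving $\Dp[Z](X)\subseteq\D_Z(\Qcoh(X))^c$.

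For (ii), I would produce perfect complexes with support in $Z$ by a Koszul-type construction. Cover $X$ by finitely many affines $U_\alpha=\spec(A_\alpha)$ on which a coherent ideal $\ki$ cutting out $Z$ has finitely many generators $f_{\alpha,1},\dots,f_{\alpha,r_\alpha}$, and form the associated Koszul complexes $K_\alpha$, which are perfect on $U_\alpha$ with set-theoretic support inside $Z\cap U_\alpha$. After globalization (e.g.\ by twisting with a sufficiently large power of an ample line bundle to obtain global sections cutting out $Z$ set-theoretically when $X$ is quasi-projective, or via a \v{C}ech-type patching of the $K_\alpha$), one obtains perfect complexes on $X$ supported on $Z$. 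That the resulting family generates $\D_Z(\Qcoh(X))$ is a local cohomology statement: a nonzero object of $\D_Z(\Qcoh(X))$ has nonzero cohomology stalk at some point of $Z$, which is detected by a nontrivial $\Hom$ from a shift of some $K_\alpha$. The main obstacle is precisely this globalization step, and it is what makes Rouquier's argument nontrivial; abstractly, one could instead invoke the localization triangle $\D_Z(\Qcoh(X))\to\D(\Qcoh(X))\to\D(\Qcoh(X\setminus Z))$ together with Neeman's Brown representability to transfer compact generation from the ambient category to the supported one, bypassing explicit Koszul computations altogether.
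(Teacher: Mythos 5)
The paper does not prove this proposition at all --- it is stated as a citation of Rouquier (Theorem~6.8), whose argument proceeds by an induction over affine open covers using Koszul complexes together with the Thomason--Neeman localization theorem. So there is no internal proof to compare against, and I will assess your proposal on its own.

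Your part (i) is correct and clean: $\D_Z(\Qcoh(X))$ is closed under coproducts in $\D(\Qcoh(X))$, the compact objects of the ambient category are the perfect complexes by Bondal--Van den Bergh, and compactness descends along a coproduct-closed full embedding. Your use of Neeman's thick-closure characterization of the compact objects to get the reverse inclusion once compact generation is established is also correct, since $\Dp[Z](X)$ is thick. The problems are all in part (ii), and one of them is a genuine gap. First, the globalization sketch is weaker than needed: twisting by an ample line bundle to get global equations for $Z$ presupposes $X$ quasi-projective, whereas the proposition only assumes $X$ separated of finite type over $\K$; and the ``\v{C}ech-type patching'' of the local Koszul complexes $K_\alpha$ does not work naively, since those complexes do not agree on overlaps up to quasi-isomorphism --- this is exactly why Rouquier (following Bondal--Van den Bergh and Thomason--Trobaugh) runs an induction on the size of an affine cover rather than gluing. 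Second, and more seriously, the claimed ``abstract'' route is circular. The Thomason--Neeman localization theorem applied to $\D_Z(\Qcoh(X))\to\D(\Qcoh(X))\to\D(\Qcoh(X\setminus Z))$ does yield the identifications $\D_Z(\Qcoh(X))^c=\D_Z(\Qcoh(X))\cap\Dp(X)$ and compact generation of the quotient, but its hypothesis is precisely that the localizing subcategory $\D_Z(\Qcoh(X))$ is generated by objects that are compact in the ambient $\D(\Qcoh(X))$. That hypothesis is the very content of the Koszul construction; Brown representability supplies adjoints and hence the localization sequence itself, but says nothing about compact generation of the kernel. So the Koszul/Thomason--Trobaugh input cannot be bypassed, and the alternative you propose does not close the argument.
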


Recall that an object $A$ in a triangulated category $\cat{T}$ is \emph{compact} if, given $\{X_i\}_{i\in I}\subset\cat{T}$ such
that $I$ is a set and $\bigoplus_i X_i$ exists in $\cat{T}$, the canonical map
\[
\bigoplus_i\Hom(A,X_i)\lto\Hom\left(A,\oplus_i X_i\right)
\]
is an isomorphism. Moreover, $\cat{T}$ is \emph{compactly generated}
if there is a set $S$ of objects in the full subcategory $\cat{T}^c$ of compact objects of $\cat{T}$ such that, given $E\in\cat{T}$ with $\Hom(A,E[i])=0$ for all $A\in S$ and all $i\in\ZZ$, then $E\iso 0$. For more details, the reader can consult \cite[Sect.\ 3.1]{R}.

The category $\D_Z(\Qcoh(X))$ is a full subcategory of $\D(\Qcoh(X))$ and let
\[
\iota\colon\D_Z(\Qcoh(X))\mono\D(\Qcoh(X))
\]
be the inclusion. We use the same symbol to denote the inclusion functor for the other categories in \eqref{eqn:categories}. According to \cite[Sect.\ 3]{L}, the functor $\iota$ has a right adjoint
\[
\iota^!\colon\D(\Qcoh(X))\to\D_Z(\Qcoh(X)).
\]
Notice that the existence of $\iota^!$ could be also deduced from \cite{N2}, since $\iota$ clearly commutes with arbitrary direct sums. As $\iota$ is fully faithful, we have
$\iota^!\comp\iota\iso\id$.

\begin{remark}\label{rmk:Lipman}
Actually \cite{L} deals only with modules over a commutative
noetherian ring. On the other hand, as it is observed in the
introduction of \cite{L}, most of its results globalize to sheaves
over schemes (in particular those used in this section).
\end{remark}

\begin{lem}\label{lem:sums}
The functor $\iota^!$ sends bounded complexes to bounded complexes and
commutes with direct sums.
\end{lem}

\begin{proof}
The fact that $\iota^!$ sends bounded complexes to bounded complexes
follows from \cite[Cor.\ 3.1.4]{L}. The commutativity with direct sums
is due to \cite[Cor.\ 3.5.2]{L}.
\end{proof}

\medskip

Now, let $X_1$ and $X_2$ be schemes containing, respectively, two closed
subschemes $Z_1$ and $Z_2$. We will denote
by $\iota_i$ (respectively $\iota_{i,j}$) the inclusion morphisms
relative to the pair $(X_i,Z_i)$ (respectively $(X_i\times
X_j,Z_i\times Z_j)$), for $i,j=1,2$.

\begin{lem}\label{lem:compat}
Let $f\colon X_1\to X_2$ be a morphism of schemes such that
$f^{-1}(Z_2)=Z_1$. Then there are isomorphisms of exact functors
\begin{gather*}
f_*\comp\iota_1\comp\iota_1^!\iso\iota_2\comp\iota_2^!\comp f_*\colon
\D(\Qcoh(X_1))\to\D(\Qcoh(X_2)) \\
f^*\comp\iota_2\comp\iota_2^!\iso\iota_1\comp\iota_1^!\comp f^*\colon
\D(\Qcoh(X_2))\to\D(\Qcoh(X_1)).
\end{gather*}
Moreover, given a pair $(X,Z)$, for every $\kf\in\D(\Qcoh(X))$ there is
an isomorphism of exact functors
\[
\iota\comp\iota^!(\kf\otimes-)\iso\kf\otimes\iota\comp\iota^!(-)\colon
\D(\Qcoh(X))\to\D(\Qcoh(X)).
\]
\end{lem}

\begin{proof}
See Cor.\ 3.4.3, 3.4.4 and 3.3.1 of \cite{L}.
\end{proof}

\begin{definition}\label{def:FMs}
An exact functor
\[
\fun{F}\colon\D_{Z_1}(\Qcoh(X_1))\to\D_{Z_2}(\Qcoh(X_2))
\]
is a \emph{Fourier--Mukai functor} if there exists $\ke\in\D_{Z_1\times Z_2}(\Qcoh(X_1\times X_2))$ and an isomorphism of exact functors
\begin{equation}\label{eqn:FMdef}
\fun{F}\iso\FMS{\ke}:=\iota_2^!\comp(p_2)_*(\iota_{1,2}(\ke)\otimes p_1^*\comp\iota_1(-))
\end{equation}
where $p_i\colon X_1\times X_2\to X_i$ is the projection.
\end{definition}

Analogous definitions can be given for functors defined between bounded derived categories of quasi-coherent, coherent or perfect complexes. The object $\ke$ is called \emph{Fourier--Mukai kernel}. We will use the standard notation $\FM{\ke}$ when $Z_i= X_i$ or to denote Fourier--Mukai functors between $\D(\Qcoh(X_1))$ and $\D(\Qcoh(X_2))$.

Observe that, as $(p_2)_*(\iota_{1,2}(\ke)\otimes p_1^*\comp\iota_1(-))$
is supported on $Z_2$, $\iota_2\comp\FMS{\ke}\iso\FM{\iota_{1,2}(\ke)}
\comp\iota_1$. This clearly implies that
$\FMS{\ke}\iso\iota_2^!\comp\FM{\iota_{1,2}(\ke)}\comp\iota_1$, and
more generally we can prove the following result.

\begin{lem}\label{lem:FMS}
Under the above assumptions, for every $\widetilde{\ke}\in
\D(\Qcoh(X_1\times X_2))$ such that
$\ke\iso\iota_{1,2}^!(\widetilde{\ke})$ there exists an isomorphism of
exact functors
\[
\FMS{\ke}\iso\iota_2^!\comp\FM{\widetilde{\ke}}\comp\iota_1\colon
\D_{Z_1}(\Qcoh(X_1))\to\D_{Z_2}(\Qcoh(X_2)).
\]
\end{lem}

\begin{proof}
Denoting by $\bar{\iota}_1$ and $\bar{\iota}_2$ the inclusion
morphisms relative to the pairs $(X_1\times X_2,Z_1\times X_2)$ and
$(X_1\times X_2,X_1\times Z_2)$ respectively, it is easy to see that
$\iota_{1,2}\comp\iota_{1,2}^!\iso
\bar{\iota}_2\comp\bar{\iota}_2^!\comp\bar{\iota}_1\comp\bar{\iota}_1^!$.
Therefore, using Lemma \ref{lem:compat} repeatedly, we obtain
\begin{multline*}
\FMS{\ke}\iso\iota_2^!\comp(p_2)_*(\iota_{1,2}\comp\iota_{1,2}^!
(\widetilde{\ke})\otimes p_1^*\comp\iota_1(-))\iso
\iota_2^!\comp(p_2)_*(\bar{\iota}_2\comp\bar{\iota}_2^!\comp
\bar{\iota}_1\comp\bar{\iota}_1^!
(\widetilde{\ke})\otimes p_1^*\comp\iota_1(-)) \\
\iso\iota_2^!\comp(p_2)_*\comp\bar{\iota}_2\comp\bar{\iota}_2^!
(\bar{\iota}_1\comp\bar{\iota}_1^!
(\widetilde{\ke})\otimes p_1^*\comp\iota_1(-))\iso
\iota_2^!\comp\iota_2\comp\iota_2^!\comp(p_2)_*
(\bar{\iota}_1\comp\bar{\iota}_1^!
(\widetilde{\ke})\otimes p_1^*\comp\iota_1(-)),
\end{multline*}
whence
\begin{multline*}
\FMS{\ke}\comp\iota_1^!\iso
\iota_2^!\comp(p_2)_*(\bar{\iota}_1\comp\bar{\iota}_1^!
(\widetilde{\ke})\otimes p_1^*\comp\iota_1\comp\iota_1^!(-))\iso
\iota_2^!\comp(p_2)_*(\bar{\iota}_1\comp\bar{\iota}_1^!
(\widetilde{\ke})\otimes
\bar{\iota}_1\comp\bar{\iota}_1^!\comp p_1^*(-)) \\
\iso\iota_2^!\comp(p_2)_*(\widetilde{\ke}\otimes
\bar{\iota}_1\comp\bar{\iota}_1^!\comp p_1^*(-))\iso
\iota_2^!\comp(p_2)_*(\widetilde{\ke}\otimes
p_1^*\comp\iota_1\comp\iota_1^!(-))=
\iota_2^!\comp\FM{\widetilde{\ke}}\comp\iota_1\comp\iota_1^!.
\end{multline*}
So we conclude that
$\FMS{\ke}\iso\FMS{\ke}\comp\iota_1^!\comp\iota_1\iso
\iota_2^!\comp\FM{\widetilde{\ke}}\comp\iota_1\comp\iota_1^!\comp\iota_1
\iso\iota_2^!\comp\FM{\widetilde{\ke}}\comp\iota_1$.
\end{proof}

\medskip

Consider the abelian categories $\Qcoh_Z(X)$ and $\Coh_Z(X)$
consisting of quasi-coherent and, respectively, coherent sheaves
supported on $Z$.
The following will be implicitly used at many points of this paper.

\begin{prop}\label{prop:Ball2} {\bf (\cite{Ba}, Lemma 3.3, Corollary 3.4.)}
The natural functors $\D(\Qcoh_Z(X))\to\D_Z(\Qcoh(X))$ and $\Db(\Coh_Z(X))\to\Db_Z(X)$ are equivalences.
\end{prop}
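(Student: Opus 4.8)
The plan is to deduce Proposition \ref{prop:Ball2} from the identification of compact objects recorded in Proposition \ref{prop:catcomp} together with standard facts about compactly generated categories. First I would note that both functors in question are the natural ``inclusion'' functors: a complex of sheaves supported on $Z$ is in particular a complex with cohomology supported on $Z$, and these functors are exact, so they are well-defined triangulated functors. The first order of business is to check that they are \emph{fully faithful}. For this I would observe that $\Qcoh_Z(X)$ (respectively $\Coh_Z(X)$) is a \emph{weak Serre subcategory} (indeed a Serre subcategory, closed under subobjects, quotients and extensions) of $\Qcoh(X)$, and that every object of $\D_Z(\Qcoh(X))$ can, after a suitable truncation/resolution argument, be represented by a complex all of whose terms lie in $\Qcoh_Z(X)$ --- this uses that $\Qcoh_Z(X)\subset\Qcoh(X)$ has enough injectives (the injective hull of an object supported on $Z$ is again supported on $Z$, since $\Qcoh_Z(X)$ is a localizing subcategory of the locally noetherian category $\Qcoh(X)$). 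The standard lemma (see, e.g., the argument behind Proposition \ref{prop:Ball2}'s reference \cite{Ba}) that $\D(\cat{A})\to\D_{\cat{B}}(\cat{C})$ is an equivalence whenever $\cat{A}\hookrightarrow\cat{C}$ has enough injectives with $\cat{A}=\cat{B}$ then gives full faithfulness directly.

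Second, I would prove \emph{essential surjectivity}. For the unbounded case, the image of $\D(\Qcoh_Z(X))$ is a triangulated subcategory of $\D_Z(\Qcoh(X))$ closed under arbitrary coproducts (coproducts in $\Qcoh_Z(X)$ agree with those in $\Qcoh(X)$, since $\Qcoh_Z(X)$ is localizing), and it contains a generating set of compact objects, e.g.\ the $\ko_{nZ}(jH)$, which by Proposition \ref{prop:catcomp} compactly generate $\D_Z(\Qcoh(X))$; by the standard ``generation'' principle for compactly generated categories --- a full triangulated subcategory closed under coproducts and containing a compact generating set is the whole category --- the functor is essentially surjective, hence an equivalence. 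For the bounded coherent case, one restricts this equivalence: an object of $\Db_Z(X)$ lies in $\D_Z(\Qcoh(X))$ and has coherent, bounded cohomology, and one shows its preimage in $\D(\Qcoh_Z(X))$ in fact lies in $\Db(\Coh_Z(X))$. Concretely, since its cohomology sheaves are coherent and supported on $Z$, they are objects of $\Coh_Z(X)$, and boundedness plus the standard truncation argument lets one replace the complex by a bounded complex with terms in $\Coh_Z(X)$ (using that $\Coh_Z(X)$ generates $\Db_Z(X)$ via the truncation filtration).

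The main obstacle I anticipate is the essential-surjectivity/full-faithfulness interplay in the \emph{unbounded} quasi-coherent case: one must be careful that the passage from a complex with cohomology in $\Qcoh_Z(X)$ to a complex with \emph{terms} in $\Qcoh_Z(X)$ is available, which hinges on $\Qcoh_Z(X)$ being a Grothendieck abelian category with enough injectives \emph{and} on those injectives remaining injective (or at least $\iota$-acyclic) after applying $\iota$. This is exactly where one invokes that $\Qcoh_Z(X)=\varinjlim$ of the $\ko_{nZ}$-torsion subcategories and the description $\iota^!(\ke)=\colim\rd\sHom(\ko_{nZ},\ke)$ from \cite{L}; alternatively one simply cites \cite[Lemma 3.3, Cor.\ 3.4]{Ba} as the excerpt already does. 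Since the statement is quoted verbatim from \cite{Ba}, in the write-up I would keep this brief: establish the two functors are triangulated and fully faithful via the enough-injectives lemma, establish essential surjectivity via compact generation (Proposition \ref{prop:catcomp}) in the unbounded case and via restriction to bounded coherent cohomology in the other, and defer the remaining bookkeeping to the cited reference.
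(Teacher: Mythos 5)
The paper itself gives no proof of this proposition: it is stated as a quotation of \cite[Lemma 3.3, Cor.\ 3.4]{Ba}, followed only by the remark that ``the proof in \cite{Ba} works in our generality as well.'' So there is no argument in the paper to compare yours against, and any comparison must be with the underlying mathematics rather than with text on the page. You do acknowledge this at the end of your proposal, which is the honest conclusion; the paper takes exactly the route you describe as ``defer to the cited reference.''

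As to your reconstruction, the general strategy (full faithfulness via injective-hull stability of $\Qcoh_Z(X)$, essential surjectivity via compact generation plus truncation for the bounded coherent case) is a reasonable and essentially correct outline. However, there is a hypothesis mismatch you should be careful about. Proposition~\ref{prop:Ball2} is stated in the general setting of Section~2.1: $X$ separated of finite type over $\K$, $Z$ a subscheme proper over $\K$ --- no quasi-projectivity of $X$, no ample $H$, and no assumption that $\ko_{nZ}\in\Dp(X)$. Your essential-surjectivity step invokes ``the $\ko_{nZ}(jH)$'' as a set of compact generators, but this family only makes sense (and is only shown to have the desired properties) in Section~2.2 under precisely those stronger hypotheses (Proposition~\ref{prop:amp}). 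Also, Proposition~\ref{prop:catcomp} does not assert that the $\ko_{nZ}(jH)$ compactly generate; it only says $\D_Z(\Qcoh(X))$ is compactly generated with $\D_Z(\Qcoh(X))^c=\Dp[Z](X)$. The argument is easily repaired without changing its structure: rather than picking the specific objects $\ko_{nZ}(jH)$, use that $\Dp[Z](X)$ compactly generates $\D_Z(\Qcoh(X))$, and observe that any object of $\Dp[Z](X)$ is bounded with coherent cohomology supported on $Z$, hence (by the very truncation argument you invoke in the coherent case) lies in the essential image of $\Db(\Coh_Z(X))\subseteq\D(\Qcoh_Z(X))$. That version works in the generality of the statement and keeps the rest of your argument intact.

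One smaller caveat: for the unbounded case, full faithfulness via ``terms in $\Qcoh_Z(X)$'' needs a bit more than bounded-below resolution arguments; one should either pass through $K$-injective resolutions (noting that injectives of $\Qcoh_Z(X)$ stay injective in $\Qcoh(X)$ because $\Qcoh_Z(X)$ is localizing in the locally noetherian $\Qcoh(X)$) or appeal to the Bousfield localization picture. You gesture at this, and it is exactly the bookkeeping that \cite{Ba} and \cite{L} handle; it is fine to defer to them, as the paper does.
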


Notice that the proof in \cite{Ba} works in our generality as well.
Denoting by $i\colon Z\mono X$ the closed embedding, we will also
need the following result.

\begin{prop}\label{prop:Ball1} {\bf (\cite{Ba}, Lemma 3.6, Corollary 3.7.)}
{\rm (i)} The functor $i_*$ is exact and the image of $i_*$ generates $\Coh_Z(X)$ as an abelian category, i.e.\ the smallest abelian subcategory of $\Coh_Z(X)$ closed under extensions and containing the essential image of $i_*$ is $\Coh_Z(X)$ itself. Similarly, the  smallest abelian subcategory closed under extensions and arbitrary direct sums containing the image of $i_*$ in $\Qcoh_Z(X)$ is $\Qcoh_Z(X)$ itself.

{\rm (ii)} The image of $\Db(Z)$ under $i_*$ classically generates $\Db_Z(X)$
and the image under $i_*$ of $\D(\Qcoh(Z))$ classically completely generates $\D_Z(\Qcoh(X))$.
\end{prop}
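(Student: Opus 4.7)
The plan is to reduce both parts of Proposition \ref{prop:Ball1} to the elementary observation that a coherent sheaf supported on $Z$ is annihilated by some power of the ideal sheaf $\ki_Z$ of $Z$. Exactness of $i_*$ is automatic since $i$ is a closed immersion. For part (i) in the coherent setting, let $\kf\in\Coh_Z(X)$. Since $X$ is Noetherian and $\kf$ is supported on $Z$, there is a smallest $n\geq 1$ with $\ki_Z^n\kf=0$. The descending filtration
\[
\kf\supseteq\ki_Z\kf\supseteq\ki_Z^2\kf\supseteq\cdots\supseteq\ki_Z^n\kf=0
\]
has successive quotients annihilated by $\ki_Z$, so each $\ki_Z^k\kf/\ki_Z^{k+1}\kf$ is a coherent $\ko_Z$-module viewed on $X$ via $i_*$. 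Thus $\kf$ arises as an iterated extension of objects in the essential image of $i_*\colon\Coh(Z)\to\Coh_Z(X)$, which gives the first half of (i).

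For the quasi-coherent version I would use that every $\kf\in\Qcoh_Z(X)$ is the filtered colimit of its coherent subsheaves, each of which automatically lies in $\Coh_Z(X)$. A filtered colimit is realised as the cokernel of a morphism between two direct sums of the terms of the system, and the smallest abelian subcategory closed under extensions and arbitrary direct sums contains all cokernels (by abelianness) and all direct sums. Applied sheaf by sheaf, the coherent statement then yields the quasi-coherent one.

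For part (ii) the reduction to (i) is via truncation. For $\ke\in\Db_Z(X)$, the standard triangles
\[
\tau_{\leq j-1}\ke\lto\tau_{\leq j}\ke\lto H^j(\ke)[-j]\lto\tau_{\leq j-1}\ke[1]
\]
present $\ke$ as a finite iterated cone of shifts of its cohomology sheaves, and each $H^j(\ke)\in\Coh_Z(X)$ lies in the thick subcategory generated by $i_*\Coh(Z)\subseteq i_*\Db(Z)$ by (i). For the unbounded version, any $\ke\in\D_Z(\Qcoh(X))$ is the homotopy colimit of its smart truncations $\tau_{\leq n}\ke$, and homotopy colimits are constructed from arbitrary direct sums and cones; combining this with the coherent-to-quasi-coherent passage gives complete classical generation.

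The main obstacle I anticipate is the bookkeeping that keeps each stage of the argument inside the relevant supported subcategory, rather than the geometric content, which is minimal. One has to check that $\Coh_Z(X)$, $\Qcoh_Z(X)$ and $\D_Z(\Qcoh(X))$ are stable under all the operations (kernels, cokernels, extensions, arbitrary direct sums, homotopy colimits) used along the way. These stabilities all follow from the fact that the cohomological support condition is preserved by each such operation, but they must be verified in order to justify that the abelian or thick subcategory generated by $i_*$ is the whole ambient category, rather than a proper subcategory thereof.
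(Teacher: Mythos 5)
The paper does not actually prove this statement; it only cites \cite{Ba}, Lemma~3.6 and Corollary~3.7, so I will assess your argument on its own merits.

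Your proof of part~(i) is correct in both cases: the $\ki_Z$-adic filtration of a coherent sheaf annihilated by $\ki_Z^n$ (which exists since $X$, being of finite type over $\K$, is Noetherian) exhibits it as a finite iterated extension of $\ko_Z$-modules, and the passage from coherent to quasi-coherent via filtered colimits, realised as cokernels of maps between direct sums, is sound. Likewise, your reduction of the bounded half of part~(ii) to part~(i) via the cohomology filtration $\tau_{\leq j-1}\ke\to\tau_{\leq j}\ke\to H^j(\ke)[-j]$ and the observation that short exact sequences become distinguished triangles is correct, since each $H^j(\ke)$ lies in $\Coh_Z(X)$ and there are only finitely many of them.

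There is, however, a genuine gap in the unbounded half of part~(ii). You write $\ke\iso\mathrm{hocolim}_n\,\tau_{\leq n}\ke$ and then say the remaining issue is handled by the coherent-to-quasi-coherent passage, but $\tau_{\leq n}\ke$ is only \emph{bounded above}, not bounded. A bounded-above complex with unboundedly many nonzero cohomologies is not a finite iterated cone of its cohomology sheaves, and the natural truncation tower $\{\tau_{\geq -m}\tau_{\leq n}\ke\}_m$ is an \emph{inverse} system, whose homotopy limit requires products, not the coproducts allowed in the definition of complete classical generation; the stupid truncations $\sigma_{\geq -m}$ change the cohomology and hence leave the supported subcategory. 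So your hocolim step reduces the problem from the unbounded case to the bounded-above case, and the bounded-above case is left unaddressed. A clean way to close the gap within the toolkit this paper already sets up is to use Proposition~\ref{prop:catcomp}: by the bounded case of~(ii), the thick subcategory generated by $i_*\Db(Z)\subseteq i_*\D(\Qcoh(Z))$ contains $\Db_Z(X)\supseteq\Dp[Z](X)=\D_Z(\Qcoh(X))^c$, and since $\D_Z(\Qcoh(X))$ is compactly generated, Neeman's theorem shows that the smallest localizing subcategory containing its compact objects is the whole category; hence the localizing subcategory generated by $i_*\D(\Qcoh(Z))$ is all of $\D_Z(\Qcoh(X))$. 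This sidesteps the need to build arbitrary unbounded complexes directly out of sheaves.
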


Recall that, according to \cite{R}, a subcategory $\cat{S}$ of a triangulated category $\cat{T}$ \emph{classically generates} $\cat{T}$ if
the smallest thick triangulated subcategory of $\cat{T}$ containing $\cat{S}$ is $\cat{T}$ itself. On the other hand, $\cat{S}$ \emph{classically completely generates} $\cat{T}$ if $\cat{T}$ is the smallest thick subcategory which is closed under direct sums and contains $\cat{S}$.

\subsection{Almost ample sets}\label{subsec:almostample}

The attempt of this section is to provide a generalization of the notion of weakly ample sequence in \cite[Appendix A.2]{IUU} which, in turn, is a generalization of the usual definition of ample sequence (see, for example, \cite{Or}).

\begin{definition}\label{def:almostample}
Given an abelian category $\cat{A}$ and a set $I$, a subset
$\{P_i\}_{i\in I}\subseteq\cat{A}$ is an \emph{almost ample set}
if, for any $A\in\cat{A}$, there exists $i\in I$ such
that:
\begin{enumerate}
\item\label{ample2} there is a natural number $k$ and an epimorphism
$P_i^{\oplus k}\epi A$;
\item\label{ample3} $\Hom_{\cat{A}}(A,P_i)=0$.
\end{enumerate}
\end{definition}

\begin{remark}\label{rmk:ampleset}
Every ample sequence and, more generally, every weakly ample sequence is an almost
ample set (with $I=\ZZ$). It is also obvious by definition that every set containing
an almost ample set (respectively a set of objects satisfying \eqref{ample2} in Definition \ref{def:almostample}) is almost ample (respectively it satisfies \eqref{ample2} in Definition \ref{def:almostample}), too.
\end{remark}

To provide examples of almost ample sets which are suited for the
supported setting we are working in, let $X$ be a quasi-projective
scheme and let $Z$ be a projective subscheme of $X$. Assume further
that $\ko_{iZ}\in\Dp(X)$, for all $i>0$. Take $H$ an ample divisor on $X$ and define the subset of $\Coh_Z(X)$
\begin{equation}\label{eqn:almamp}
\cat{Amp}(Z,X,H):=\{\ko_{|i|Z}(jH))\}_{(i,j)\in\ZZ\times\ZZ}.
\end{equation}
When needed, we will think of $\cat{Amp}(Z,X,H)$ as the corresponding full subcategory of $\Coh_Z(X)$.

\begin{ex}\label{ex:geosetting}
	There are two interesting geometric situations for which $\ko_{iZ}\in\Dp(X)$, for all $i>0$, and thus $\cat{Amp}(Z,X,H)$ is contained in $\Dp[Z](X)$. Namely one can take $X$ to be a quasi-projective scheme containing a projective subscheme $Z$ such that either $Z=X$ or $X$ is smooth.
\end{ex}

The following result will be essential for the rest of the
paper. Using, for example, the notation of \cite[Def.\ 1.1.4]{HL}, we
denote by $T_0(\ko_Z)$ the maximal subsheaf of $\ko_Z$ whose support
has dimension $0$. For short, we call $T_0(\ko_Z)$ the \emph{maximal
  $0$-dimensional torsion subsheaf of $\ko_Z$}.

\begin{prop}\label{prop:amp}
Assume that $X$, $Z$ and $H$ are as above. Then $\cat{Amp}(Z,X,H)$
satisfies \eqref{ample2} in Definition
\ref{def:almostample}, and provides a set of (compact) generators of
the Grothendieck category $\Qcoh_Z(X)$. Moreover, if $T_0(\ko_Z)=0$,
then $\cat{Amp}(Z,X,H)$ is an almost ample set in $\Coh_Z(X)$.

More precisely, for any $\ka\in\Coh_Z(X)$, there is $N\in\ZZ$ such
that any $\ko_{|i|Z}(jH)$ with $i<N$ and $j\ll i$ satisfies \eqref{ample2} (and \eqref{ample3} if $T_0(\ko_Z)=0$)
in Definition \ref{def:almostample}.
\end{prop}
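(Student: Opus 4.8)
The plan is to verify the three numbered conditions of Definition \ref{def:almostample} for the family $\{\ko_{|i|Z}(jH)\}$, exploiting ampleness of $H$ together with the fact that each $\ka\in\Coh_Z(X)$ is annihilated by some power of the ideal sheaf of $Z$, hence is (scheme-theoretically) supported on some infinitesimal neighbourhood $mZ$. First I would fix $\ka\in\Coh_Z(X)$ and choose $m$ with $\ka\in\Coh(mZ)\subseteq\Coh_Z(X)$ via $i_*$ (Proposition \ref{prop:Ball1}(i)); then for all $|i|\geq m$ the restriction $\ko_{|i|Z}\to\ko_{mZ}$ makes $\ka$ a sheaf on a scheme on which $H$ is still ample, so Serre vanishing and global generation on $mZ$ apply after twisting by $jH$ with $j$ sufficiently negative. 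Concretely, for \eqref{ample1} I would argue that $\Hom(\ko_{|i|Z}(jH),\ka)=\Hom(\ko_{mZ}(jH),\ka)=H^0(\ka(-jH)\rest{mZ})$ and that, for $j\ll 0$, $\ka(-jH)$ is globally generated on $mZ$, giving a surjection $\ko_{mZ}(jH)^{\oplus k}\epi\ka$, which factors the natural evaluation map and so forces it to be surjective as well. For \eqref{ample2} I would take this same epimorphism $P:=\ko_{|i|Z}(jH)^{\oplus k}\epi\ka$ and show that the induced map $\Hom(\ka,\ka[\ell])\to\Hom(P,\ka[\ell])$ vanishes for $\ell\neq 0$: since $P$ is a perfect complex concentrated in degree $0$ on $X$ and $\ka$ is a sheaf, $\Hom(P,\ka[\ell])=\Ext^\ell(P,\ka)$, which for $\ell<0$ is automatically $0$, and for $\ell>0$ can be killed by pushing $j$ further down (using that $\ko_{|i|Z}(jH)$ is locally free on $mZ$ and Serre vanishing of $\Ext^\ell$-sheaves, computed via a finite locally free resolution of $\ka$ on $X$ — here one uses $\ko_{iZ}\in\Dp(X)$ so that these $\Ext$-computations stay perfect). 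For \eqref{ample3}, assuming $T_0(\ko_Z)=0$, I would show $\Hom(\ka,\ko_{|i|Z}(jH))=0$ for $i\ll 0$ and $j\ll i$: the point is that any nonzero map would land in the torsion-free part and, after restricting to a suitable neighbourhood and twisting, contradict Serre duality / degree considerations, where the hypothesis $T_0(\ko_Z)=0$ rules out the one obstruction (maps from $0$-dimensional sheaves) that cannot be removed by twisting.

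For the generation statement, I would invoke Proposition \ref{prop:Ball1} together with \ref{prop:Ball2}: the sheaves $\ko_{mZ}(jH)$ for varying $m,j$ already generate $\Qcoh(mZ)$ for each fixed $m$ (standard, since $\{\ko_X(jH)\}$ generates $\Qcoh(X)$ and restriction is exact and essentially surjective onto $\Qcoh(mZ)$), and $\bigcup_m\Qcoh(mZ)$ exhausts $\Qcoh_Z(X)$; hence the family generates $\Qcoh_Z(X)$ as a Grothendieck category. Compactness of each $\ko_{|i|Z}(jH)$ in $\D_Z(\Qcoh(X))$ is immediate from Proposition \ref{prop:catcomp}, as these objects lie in $\Dp[Z](X)$ by the standing hypothesis $\ko_{iZ}\in\Dp(X)$ and since twisting by the line bundle $\ko_X(jH)$ preserves perfection.

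The uniformity in the ``more precisely'' clause — that a single threshold $N$ works, with the conditions holding for \emph{all} $i<N$ and \emph{all} $j\ll i$ — I would obtain by tracking the dependence of the bounds: the integer $m=m(\ka)$ above depends only on $\ka$, and for any $i$ with $|i|\geq m$ the relevant cohomology on $|i|Z$ agrees with that on $mZ$; the Serre-type vanishing bound on $j$ then depends only on $\ka$ and $m$, not on $i$, so setting $N:=-m$ does the job. I expect the main obstacle to be condition \eqref{ample3}, i.e.\ controlling $\Hom(\ka,\ko_{|i|Z}(jH))$: unlike \eqref{ample1} and \eqref{ample2}, which are global-generation/Serre-vanishing statements that become easier as $j\to-\infty$, here one is mapping \emph{into} a negative twist, so naive vanishing arguments go the wrong way; the resolution is to pass to the neighbourhood $mZ$ on which $\ko_{|i|Z}(jH)$ is a line bundle, apply Serre duality there (or on $X$), and use $T_0(\ko_Z)=0$ to exclude the residual $0$-dimensional contributions — making this precise, especially reconciling duality on the possibly singular, non-equidimensional schemes $mZ$ with the twisting estimates, is the delicate part. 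The comparison with \cite[Appendix A.2]{IUU} should serve as a model for this last step.
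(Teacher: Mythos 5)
Your treatment of condition \eqref{ample1}, of compactness, and of generation follows the same route as the paper and is fine. The genuine gap is in your argument for condition \eqref{ample2}. You propose to verify it by showing that the target group $\Hom_{\Db_Z(X)}(P,\ka[\ell])\iso\Ext^\ell_X(\ko_{|i|Z}(jH)^{\oplus k},\ka)$ vanishes for $\ell>0$ once $j\ll0$. This is false: the sheaves $\ko_{|i|Z}(jH)$ are not locally free on $X$ (only on $|i|Z$), and for $j\ll0$ the local-to-global spectral sequence gives $\Ext^\ell_X(\ko_{|i|Z}(jH),\ka)\iso H^0(\mathcal{E}xt^\ell_X(\ko_{|i|Z},\ka)\otimes\ko_X(-jH))$, which \emph{grows} rather than vanishes as $j\to-\infty$. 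Concretely, take $X=\PP^2$, $Z=\PP^1$ a line, $\ka=\ko_Z$: the Koszul resolution $0\to\ko_X(-|i|H)\to\ko_X\to\ko_{|i|Z}\to0$ yields $\Ext^1_X(\ko_{|i|Z}(jH),\ko_Z)\iso H^0(\ko_{\PP^1}(|i|-j))\ne0$ for $j\ll0$, because the connecting map is multiplication by the section cutting out $|i|Z$, which restricts to zero on $Z$. What you are missing is that \eqref{ample2} asks only that the induced \emph{map} $\Hom(\ka,\ka[\ell])\to\Hom(P^{\oplus k},\ka[\ell])$ be zero, not that the target vanish, and one must exploit the structure of the source. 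The paper's proof fixes a basis of $\Hom_{\Db_Z(X)}(\ka,\ka[k])$, notes that each basis element is represented by an extension of complexes of $\ko_{nZ}$-modules for some fixed $n\gg0$, and then, taking $|i|\ge n$ and $j\ll0$, observes that the pullback of such a class along $\phi\colon P^{\oplus k}\epi\ka$ is already trivial in $\Db(|i|Z)$: there $P$ \emph{is} a line bundle, so $\Ext^k_{|i|Z}(P,\ka)\iso H^k(|i|Z,\ka(-jH))$ vanishes by Serre on the projective scheme $|i|Z$. Pushing forward by the exact functor $i_*$ then kills the class in $\Db_Z(X)$. The vanishing must be asserted on $|i|Z$ where $P$ is locally free, not on the ambient $X$, where it simply fails.

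On condition \eqref{ample3} your sketch points in the right direction but is over-engineered: Serre duality on the possibly singular, non-equidimensional schemes $mZ$ is not needed, and you are right to fear it would be delicate. It suffices to observe that $\Hom(\ka,\ko_{|i|Z}(jH))\iso H^0(\sHom(\ka,\ko_{|i|Z})\otimes\ko_X(jH))$, and that $T_0(\ko_{|i|Z})=0$ forces $T_0(\sHom(\ka,\ko_{|i|Z}))=0$ (a local section of the sheaf hom supported at a point would have image inside the $0$-dimensional torsion of $\ko_{|i|Z}$, hence be zero). A coherent sheaf with vanishing $T_0$ on a projective scheme has $H^0(-\otimes\ko(jH))=0$ for $j\ll0$, which gives \eqref{ample3} with the same threshold $N$ as for \eqref{ample1}.
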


\begin{proof}
Notice that under the assumption $\ko_{iZ}\in\Dp(X)$ for all $i>0$,
the objects $\ko_{|i|Z}(jH)$ are compact as well for all $i,j\in\ZZ$.

Let $\ke$ be a sheaf in $\Coh_Z(X)$. To prove property \eqref{ample2},
observe that by \cite[Lemma 7.40]{R} there is an integer $N\le0$ such
that $\ke$ is a coherent $\ko_{|i|Z}$-module for all $i<N$. As $|i|Z$
is a projective scheme, it follows that for $i<N$ and $j\ll i$ there
is an epimorphism $\ko_{|i|Z}(jH)^{\oplus k}\epi\ke$ (for some
$k\in\NN$) in $\Coh(|i|Z)$, whence also in $\Coh_Z(X)$.

Assuming $T_0(\ko_Z)=0$ (which, indeed, implies $T_0(\ko_{|i|Z})=0$),
property \eqref{ample3} is easily verified taking $i<N$ and $j\ll
i$. In fact, for $i<N$ and any $j\in\ZZ$, we have
\[
\Hom_{\Coh_Z(X)}(\ke,\ko_{|i|Z}(jH))\iso\Hom_{\Coh(|i|Z)}(\ke,\ko_{|i|Z}(jH)).
\]
Under the assumption $T_0(\ko_{|i|Z})=0$, the same argument as in the proof of \cite[Prop.\ 9.2]{LO} yields that, for $j$ sufficiently small, the latter vector space is trivial.

To prove that $\cat{Amp}(Z,X,H)$ is a set of generators for the
category $\Qcoh_Z(X)$, it is enough to observe that, in view of
\eqref{ample2} of Definition \ref{def:almostample} and the fact that
any quasi-coherent sheaf is the direct limit of its coherent
subsheaves, for any $\ke\in\Qcoh_Z(X)$ there is a surjection
$\bigoplus_{j\in S}P_j\epi\ke$ where $S$ is a set and
$P_j\in\cat{Amp}(Z,X,H)$ for all $j\in S$ (see \cite[Sect.\ 8.3]{KS}).
\end{proof}

\begin{ex}\label{ex:IUU}
If $X$ is the resolution of an $A_n$-singularity and $Z$ is the
exceptional locus, a special case of almost ample set for $\Coh_Z(X)$
is provided by the \emph{weak ample sequence} $\cat{C}$ in
\cite[Appendix A]{IUU}, where $\cat{C}=\{\ko_{|i|Z}(iH)\in\cat{Amp}(Z,X,H):i\in\ZZ\}$.
\end{ex}

\section{Extending natural transformations}\label{sec:extending}

In this section we deal with the second key ingredient in our proof,
namely a criterion to extend natural transformations (in particular,
isomorphisms) between functors. We tried to put this result in a
generality which goes beyond the scope of this paper but which may be
useful in future works (see, for example, \cite[Prop.\ 5.15]{CS3}).

\subsection{Convolutions}\label{subsec:convolutions}

In this section we collect some well-known facts about convolutions which will be used in the paper. Most of the terminology in taken from \cite{Ka,Or} (see also \cite{CS}).

A bounded complex in a
triangulated category $\cat{T}$ is a sequence of objects and
morphisms in $\cat{T}$
\begin{equation}\label{eqn:complex}
A_{m}\mor{d_{m}}A_{m-1}\mor{d_{m-1}}\cdots\mor{d_{1}}A_0
\end{equation}
such that $d_{j}\comp d_{j+1}=0$ for $0<j<m$. A \emph{right convolution} of
\eqref{eqn:complex} is an object $A$ together with a morphism
$d_0\colon A_0\to A$ such that there exists a diagram in $\cat{T}$
\[
\xymatrix{A_m \ar[rr]^-{d_m} \ar[dr]_-{\id} \ar@{}[drr]|{\circlearrowright}
& & A_{m-1} \ar[rr]^-{d_{m-1}} \ar[dr] \ar@{}[drr]|{\circlearrowright} & &
\cdots \ar[rr]^-{d_2} & & A_1 \ar[rr]^-{d_1} \ar[dr]
\ar@{}[drr]|{\circlearrowright} & & A_0 \ar[dr]_-{d_0} \\
 & A_m \ar[ru] & & C_{m-1} \ar[ll]^-{[1]} \ar[ru] & & \cdots
\ar[ll]^-{[1]} & & C_1 \ar[ll]^-{[1]} \ar[ru] & & A, \ar[ll]^-{[1]}}
\]
where the triangles with a $\circlearrowright$ are
commutative and the others are distinguished.

\smallskip

Let $d_0\colon A_0\to A$ be a right convolution of \eqref{eqn:complex}. If
$\cat{T}'$ is another triangulated category and
$\fun{G}\colon\cat{T}\to\cat{T}'$ is an exact functor, then
$\fun{G}(d_0)\colon\fun{G}(A_0)\to\fun{G}(A)$ is a right convolution of
\[
\fun{G}(A_{m})\mor{\fun{G}(d_{m})}\fun{G}(A_{m-1})\mor{\fun{G}(d_{m-1})}
\cdots\mor{\fun{G}(d_{1})}\fun{G}(A_0).
\]
The following results will be used in the rest of this section. Notice that left convolutions in the notation of \cite{Ka} correspond to right convolutions in the present paper (according to the notation in \cite{CS,Or}).

\begin{lem}\label{lem:conv1} {\bf (\cite{Ka}, Lemmas 2.1 and 2.4.)}
Let \eqref{eqn:complex} be a complex in $\cat{T}$ satisfying
\begin{equation}\label{eqn:condconv}
\Hom_{\cat{T}}(A_a,A_b[r])=0\mbox{ for any $a>b$ and $r<0$}.
\end{equation}
Then \eqref{eqn:complex} has a right convolution which is uniquely
determined up to isomorphism (in general non canonical).
\end{lem}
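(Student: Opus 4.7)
\medskip

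\noindent\emph{Proof proposal.} The plan is to proceed by induction on the length $m$ of the complex. The base case $m=0$ is trivial, as one can take $d_0=\id_{A_0}\colon A_0\to A_0$ as a right convolution. For the inductive step, complete $d_m\colon A_m\to A_{m-1}$ to a distinguished triangle
\[
A_m\mor{d_m}A_{m-1}\lto C_{m-1}\lto A_m[1].
\]
Since $d_{m-1}\comp d_m=0$, applying $\Hom_{\cat{T}}(\farg,A_{m-2})$ to this triangle and using the hypothesis $\Hom_{\cat{T}}(A_m,A_{m-2}[-1])=0$ (available because $m>m-2$ and $-1<0$), the morphism $d_{m-1}$ factors through $C_{m-1}\to A_{m-2}$; by the same vanishing, this factorization is moreover \emph{unique}.

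The next step is to check that the truncated sequence
\[
C_{m-1}\lto A_{m-2}\mor{d_{m-2}}A_{m-3}\mor{d_{m-3}}\cdots\mor{d_1}A_0
\]
is again a complex satisfying \eqref{eqn:condconv}. That the first new composition vanishes follows from the fact that, after pre-composition with the epi $A_{m-1}\to C_{m-1}$, it coincides with $d_{m-2}\comp d_{m-1}=0$, together with the injectivity of $\Hom_{\cat{T}}(C_{m-1},A_{m-3})\to\Hom_{\cat{T}}(A_{m-1},A_{m-3})$ provided by $\Hom_{\cat{T}}(A_m,A_{m-3}[-1])=0$. The hypothesis \eqref{eqn:condconv} transfers to the truncated complex: for any $b<m-1$ and $r<0$, the long exact sequence
\[
\Hom_{\cat{T}}(A_m,A_b[r-1])\lto\Hom_{\cat{T}}(C_{m-1},A_b[r])\lto\Hom_{\cat{T}}(A_{m-1},A_b[r])
\]
sandwiches the middle term between two zeros. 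By induction we obtain a right convolution $C_{m-1}\to A$ of the shortened complex, and splicing it with the triangle defining $C_{m-1}$ produces the required diagram exhibiting $d_0\colon A_0\to A$ as a right convolution of the original complex.

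For uniqueness up to (non-canonical) isomorphism, one runs a parallel induction: given two right convolutions with associated diagrams involving cones $C_i$ and $C_i'$, the triangle $A_m\to A_{m-1}\to C_{m-1}$ and the vanishing of $\Hom_{\cat{T}}(A_m,A_{m-2}[-1])$ force the lifts $C_{m-1}\to A_{m-2}$ and $C_{m-1}'\to A_{m-2}$ to agree after transport along the (non-canonical) isomorphism $C_{m-1}\iso C_{m-1}'$ coming from the axioms of triangulated categories, so the induction hypothesis applied to the shortened complex produces an isomorphism between the two resulting convolutions.

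The main obstacle I expect is bookkeeping rather than conceptual: one has to juggle the non-canonical choices of cones at each inductive step while tracking that the vanishing hypothesis \eqref{eqn:condconv} persists for the truncated complex after each cone formation. The non-canonicity in the uniqueness statement is intrinsic and cannot be avoided, since the isomorphism between two cones of the same morphism is itself non-canonical; the hypothesis \eqref{eqn:condconv} is precisely what is needed to propagate existence of such isomorphisms down the diagram without creating obstructions in higher $\Ext$-groups.
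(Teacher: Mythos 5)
The paper proves nothing here and simply cites Kawamata's Lemmas 2.1 and 2.4, whose argument is exactly the induction you carry out: cone off the leftmost differential, lift $d_{m-1}$ through the cone (the vanishing hypothesis makes the lift unique and also forces $d_{m-2}\comp e_{m-1}=0$), check that \eqref{eqn:condconv} propagates to the shortened complex via the long exact sequence, and splice. Your proof is correct and takes that same route; the only slip is the phrase ``right convolution $C_{m-1}\to A$,'' which should read $A_0\to A$, and you might note that the compositions involving $A_{m-3}$ only need checking when $m\ge3$, which is automatic.
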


\begin{lem}\label{lem:conv2} {\bf (\cite{CS}, Lemma 3.3.)}
Let
\[
\xymatrix{A_m \ar[rr]^-{d_m} \ar[d]^{f_m} & & A_{m-1}
\ar[rr]^-{d_{m-1}} \ar[d]^{f_{m-1}} & & \cdots \ar[rr]^-{d_2} & & A_1
\ar[rr]^-{d_1} \ar[d]^{f_1} & & A_0 \ar[d]^{f_0}\\
B_m \ar[rr]^-{e_m} & & B_{m-1} \ar[rr]^-{e_{m-1}} & & \cdots
\ar[rr]^-{e_2} & & B_1 \ar[rr]^-{e_1} & & B_0}
\]
be a morphism of complexes both satisfying \eqref{eqn:condconv} and
such that
\[
\Hom_{\cat{T}}(A_a,B_b[r])=0\mbox{ for any $a>b$ and $r<0$}.
\]
Assume that the corresponding right convolutions are of the
form $(d_0,0)\colon A_0\to A\oplus\bar{A}$ and $(e_0,0)\colon B_0\to
B\oplus\bar{B}$ and that
$\Hom_{\cat{T}}(A_p,B[r])=0$ for $r<0$ and any $p$. Then there
exists a unique morphism $f\colon A\to B$
such that $f\comp d_0=e_0\comp f_0$. If moreover each $f_i$ is an isomorphism,
then $f$ is an isomorphism as well.
\end{lem}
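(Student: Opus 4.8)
The plan is to realise both right convolutions as Postnikov systems, to lift the given morphism of complexes to a morphism of these systems, to read off from it a morphism $\tilde f\colon A\oplus\bar A\to B\oplus\bar B$ compatible with the structure maps $(d_0,0)$ and $(e_0,0)$, and finally to put $f:=p_B\comp\tilde f\comp i_A$, where $i_A\colon A\mono A\oplus\bar A$ and $p_B\colon B\oplus\bar B\epi B$ are the split inclusion and projection. Uniqueness of $f$ will come from a short long-exact-sequence argument, and the statement about isomorphisms from the two-out-of-three property for morphisms of distinguished triangles.

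For the construction, I would first recall from (the proof of) Lemma \ref{lem:conv1} that a right convolution of a complex satisfying \eqref{eqn:condconv} is obtained as an iterated cone: there are objects $C^A_1,\dots,C^A_{m-1}$ and distinguished triangles, the last of which reads
\[
C^A_1\to A_0\mor{(d_0,0)}A\oplus\bar A\mor{w_A}C^A_1[1],
\]
and $C^A_1$ lies in the triangulated subcategory generated by $A_1,A_2[1],\dots,A_m[m-1]$ (similarly for the $B$-complex). Since right convolutions are unique only up to a non-canonical isomorphism of convolutions, we may assume the two given ones are of this form, so that such Postnikov systems are at our disposal. Feeding the commuting squares of the given morphism of complexes into the triangulated-category axioms, one constructs morphisms $C^A_j\to C^B_j$ compatible with all the triangles; the last of them is $\tilde f\colon A\oplus\bar A\to B\oplus\bar B$ with $\tilde f\comp(d_0,0)=(e_0,0)\comp f_0$. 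Writing $(d_0,0)=i_A\comp d_0$, $(e_0,0)=i_B\comp e_0$ and using $p_B\comp i_B=\id$, the map $f:=p_B\comp\tilde f\comp i_A$ then satisfies $f\comp d_0=p_B\comp\tilde f\comp(d_0,0)=p_B\comp(e_0,0)\comp f_0=e_0\comp f_0$.

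For uniqueness, if $f$ and $f'$ both satisfy $f\comp d_0=f'\comp d_0=e_0\comp f_0$, I set $h:=(f-f')\comp p_A\colon A\oplus\bar A\to B$, so that $h\comp(d_0,0)=(f-f')\comp d_0=0$. Applying $\Hom_{\cat{T}}(\farg,B)$ to the triangle above, $h$ factors through $w_A$, hence it is enough to show $\Hom_{\cat{T}}(C^A_1[1],B)=0$. Now $\Hom_{\cat{T}}(C^A_1[1],B)\iso\Hom_{\cat{T}}(C^A_1,B[-1])$, and an easy induction along the triangles defining $C^A_1$ shows that $\Hom_{\cat{T}}(C^A_1,B[r])=0$ for all $r<0$ as soon as $\Hom_{\cat{T}}(A_k,B[-k])=0$ for $1\le k\le m$ — a special case of the hypothesis $\Hom_{\cat{T}}(A_p,B[r])=0$ ($r<0$, any $p$). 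Hence $h=0$, and composing with $i_A$ gives $f=f'$.

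Finally, assume each $f_i$ is an isomorphism. Then at every stage of the construction above one has a morphism of distinguished triangles two of whose vertical arrows are isomorphisms, so the third is one as well; inductively $\tilde f$ is an isomorphism. It then remains to descend this to the summands. Running the same construction on the inverse morphism of complexes $\{f_i^{-1}\}$ — which uses only the triangulated-category axioms, the required Postnikov systems being already in hand — produces a morphism $g\colon B\to A$ with $g\comp e_0=d_0\comp f_0^{-1}$; then $g\comp f$ and $f\comp g$ satisfy $(g\comp f)\comp d_0=d_0$ and $(f\comp g)\comp e_0=e_0$, while $f\comp(g\comp f)=f$ by the uniqueness just established, and a short diagram chase forces $g\comp f=\id_A$ and $f\comp g=\id_B$. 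I expect this last descent to be the main obstacle: one must control the auxiliary objects $\bar A,\bar B$ — present only because right convolutions are unique merely up to non-canonical isomorphism — and verify that they cause no interference, and it is exactly here that the vanishing $\Hom_{\cat{T}}(A_p,B[r])=0$ for $r<0$ is used a second time.
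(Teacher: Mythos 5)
You are reimplementing a lemma that the present paper does not prove itself---it is quoted verbatim from \cite{CS}, Lemma 3.3---so I can only compare against the standard Postnikov-system argument that the reference uses, which is indeed the approach you take.

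Your treatment of existence and uniqueness is essentially correct. For existence, lifting the morphism of complexes to a morphism of Postnikov systems uses, at each step, the hypothesis $\Hom(A_a,B_b[r])=0$ for $a>b$, $r<0$ (to show that the next square commutes after a TR3 choice, the defect factors through $A_a[1]\to B_b$); you do not make this explicit, but it is the content of your ``feeding the commuting squares into the axioms''. One small point: you need not say ``we may assume the convolutions are of this form''---the definition of a right convolution already supplies the Postnikov system, so it is simply given. Your uniqueness argument (vanishing of $\Hom(C^A_1[1],B)$ from $\Hom(A_p,B[r])=0$, $r<0$) is correct.

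The isomorphism statement is where your proof has a genuine gap, and the ``short diagram chase'' does not close it. Let us see exactly how far your ingredients reach. From the construction on $\{f_i^{-1}\}$ (which is legitimate: $\Hom(B_a,A_b[r])\iso\Hom(A_a,A_b[r])=0$ via the isomorphisms $f_a,f_b$) you obtain $g$ with $g\comp e_0=d_0\comp f_0^{-1}$. Applying uniqueness to the identity morphism of the complex $\{B_i\}$---which is permitted because $\Hom(B_p,B[r])\iso\Hom(A_p,B[r])=0$ via $f_p$---one deduces $f\comp g=\id_B$, so $f$ is a split epimorphism. But the analogous step $g\comp f=\id_A$ would require uniqueness of morphisms $A\to A$ agreeing with $\id$ on $A_0$, and that needs $\Hom(A_p,A[r])=0$ for $r<0$, which is \emph{not} among the hypotheses and is not derivable from $\Hom(A_p,B[r])=0$ before one already knows $f$ is an isomorphism (the implication is circular). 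Your fallback, ``$f\comp(g\comp f)=f$ by uniqueness, hence a short diagram chase forces $g\comp f=\id_A$'', fails: $f\comp(g\comp f-\id_A)=0$ only shows that $\id_A-g\comp f$ is an idempotent killed by $f$, not that it vanishes, since $f$ is not known to be a (split) monomorphism. Equivalently, at the level of $\tilde f$ one would need to show that the component $\gamma:=p_{\bar B}\comp\tilde f\comp i_A\colon A\to\bar B$ vanishes; $\gamma\comp d_0=0$ gives a factorisation through $w_A$, but killing it requires $\Hom(C_1^A[1],\bar B)=0$, i.e.\ $\Hom(A_p,\bar B[r])=0$ for $r<0$, again not assumed. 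So what you have actually established is that $f$ is split epi; the descent from ``$\tilde f$ is an isomorphism'' to ``$f$ is an isomorphism'' is precisely the step that remains open in your write-up, as you yourself anticipate, and the diagram chase you indicate is not enough to close it.
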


Let $\cat{T}:=\Db(\cat{A})$ for some abelian category $\cat{A}$ and
let $E$ be a complex as in \eqref{eqn:complex} and such that every
$A_i$ is an object of $\cat{A}$. Then a right convolution of $E$
(which is unique up to isomorphism by Lemma \ref{lem:conv1}) is the
natural morphism $A_0\to\cp{E}$, where $\cp{E}$ is the object of
$\Db(\cat{A})$ naturally associated to $E$ (namely, $E^i:=A_{-i}$ for
$-m\le i\le0$ and otherwise $E^i:=0$, with differential $d_{-i}\colon E^i\to
E^{i+1}$ for $-m\le i<0$).

\subsection{The criterion: extension to a subcategory}\label{subsec:crit1}

Looking carefully at the proof of \cite[Prop.\ 3.7]{CS}, one sees that the notion of ample sequence can be replaced there by the one of almost ample set. In particular, if $\cat{T}$ is a
triangulated category and $\cat{A}$ is an abelian category, we can deal with exact functors $\fun{F}\colon\Db(\cat{A})\to\cat{T}$ satisfying the following condition:
\smallskip
\begin{itemize}
\item[$\Cb$] \emph{$\Hom_{\cat{T}}(\fun{F}(A),\fun{F}(B)[k])=0$,
for any $A,B\in\cat{A}$ and any $k<0$.}
\end{itemize}

\smallskip

Hence one can prove the following result.

\begin{prop}\label{prop:extending}
Let $\cat{T}$ be a triangulated category and let $\cat{A}$ be an abelian category of finite homological dimension. Assume
that $\{P_{i}\}_{i\in I}\subseteq\cat{A}$ is an almost ample set and
denote by $\cat{C}$ the corresponding full subcategory. Let
$\fun{F}_1,\fun{F}_2\colon\Db(\cat{A})\to\cat{T}$ be exact functors
and let $f\colon\fun{F}_1\rest{\cat{C}}\isomor\fun{F}_2\rest{\cat{C}}$ be
an isomorphism of functors. Assume moreover the following:
\begin{itemize}
\item[(i)] the functor $\fun{F}_1$ satisfies $\Cb$;
\item[(ii)] $\fun{F}_1$ has a left adjoint.
\end{itemize}
Then there exists an isomorphism of exact functors
$g\colon\fun{F}_1\isomor\fun{F}_2$ extending $f$.
\end{prop}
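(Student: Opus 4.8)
The plan is to reconstruct $\fun{F}_2$ from its restriction to $\cat{C}$ using the weak ampleness of $\{P_i\}_{i\in I}$ together with convolutions, following the circle of ideas in \cite[Prop.\ 3.7]{CS}. First I would fix an arbitrary object $A\in\cat{A}$ and, using property \eqref{ample1} of Definition \ref{def:almostample}, build a left resolution of $A$ by finite direct sums of objects of $\cat{C}$: since $\cat{A}$ has finite homological dimension $n$, this terminates after $n$ steps, giving a bounded complex $P_{\bullet}\to A$ with each $P_j$ a finite sum of objects $P_{i_j}$ from the weakly ample set, and $A\iso \cp{(P_{\bullet})}$ in $\Db(\cat{A})$. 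Applying $\fun{F}_k$ gives a complex in $\cat{T}$; by condition $\Cb$ (for $\fun{F}_1$) together with the fact that $f$ is an isomorphism on $\cat{C}$ (which transports the $\Cb$-type vanishing to $\fun{F}_2$ on the $P_j$'s, hence the hypothesis of Lemma \ref{lem:conv1} holds), each of these complexes admits a right convolution, and $\fun{F}_k(A)$ \emph{is} such a convolution because $\fun{F}_k$ is exact and takes the canonical morphism $A_0\to\cp{(P_{\bullet})}$ to a right convolution.

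\textbf{Key steps.} With the resolutions in hand, I would: (1) use Lemma \ref{lem:conv1} to note the convolution is unique up to isomorphism, so the isomorphism $f$ on the terms $P_j$ induces, via Lemma \ref{lem:conv2}, an isomorphism $g_A\colon\fun{F}_1(A)\isomor\fun{F}_2(A)$ — here the hypothesis $\Hom_{\cat{T}}(\fun{F}_1(P_a),\fun{F}_2(B)[r])=0$ for $r<0$ needed in Lemma \ref{lem:conv2} follows again from $\Cb$ and the isomorphism $f$; (2) check that $g_A$ does not depend on the chosen resolution, by comparing two resolutions through a common refinement and invoking the uniqueness clause of Lemma \ref{lem:conv2}; (3) check naturality, i.e.\ that for a morphism $\alpha\colon A\to A'$ in $\cat{A}$ the square with $g_A,g_{A'}$ commutes — this uses that $\alpha$ can be lifted to a morphism of chosen resolutions (possibly after refining), and again the uniqueness part of Lemma \ref{lem:conv2}; (4) having a natural isomorphism of the restrictions of $\fun{F}_1,\fun{F}_2$ to $\cat{A}$, extend it to all of $\Db(\cat{A})$ — this is where hypothesis (ii), the existence of a left adjoint to $\fun{F}_1$, enters: a standard dévissage argument (every object of $\Db(\cat{A})$ is built from objects of $\cat{A}$ by finitely many triangles and shifts, and the adjunction lets one extend a morphism of functors compatibly with triangles, cf.\ the argument in \cite{CS}) produces the desired $g\colon\fun{F}_1\isomor\fun{F}_2$; and finally (5) verify that $g$ restricted to $\cat{C}$ recovers $f$, which is immediate from the construction since for $A=P_i\in\cat{C}$ one may take the trivial one-term resolution.

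\textbf{Main obstacle.} The technical heart — and the step I expect to be most delicate — is establishing well-definedness and naturality of $g_A$ (steps (2) and (3)): one must show that the isomorphism produced by Lemma \ref{lem:conv2} is genuinely independent of all choices (resolution, lifting of morphisms, the non-canonical convolutions themselves). This is exactly the point where one cannot be cavalier, and it is precisely the place where weak ampleness (rather than mere generation) is used: properties \eqref{ample2} and \eqref{ample3} are what guarantee enough vanishing of negative $\Hom$'s to keep the uniqueness statements in Lemmas \ref{lem:conv1} and \ref{lem:conv2} applicable after passing to refinements. The extension from $\cat{A}$ to $\Db(\cat{A})$ in step (4) is more routine but must be handled with care to ensure compatibility with shifts, as required by our conventions on natural transformations between exact functors; the left-adjoint hypothesis is what makes this go through, by allowing one to test equality of morphisms of functors on a generating set.
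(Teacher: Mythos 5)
Your overall strategy — build a resolution by weakly ample objects, apply $\fun{F}_k$, use Lemmas \ref{lem:conv1} and \ref{lem:conv2} to produce $g_A$, check well-definedness and naturality, then do a dévissage up to $\Db(\cat{A})$ — is the same circle of ideas the paper uses (via Propositions \ref{prop:extab} and \ref{prop:extending1} and Corollary \ref{cor:critiso2}, combined with Lemma \ref{lem:adjoint}). But there is a concrete gap at your step~1.

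You claim that because $\cat{A}$ has finite homological dimension $n$, a left resolution of $A$ by finite sums of objects $P_{i_j}$ from the weakly ample set terminates after $n$ steps, giving a \emph{bounded} complex $P_\bullet\to A$ with $A\iso\cp{(P_\bullet)}$. This is false: the objects $P_i$ in a weakly ample set are not assumed to be projective, so the kernels $K_m=\ker d_m$ produced at each stage need not vanish, even if $\cat{A}$ has homological dimension $1$. (Already in $\Coh(\PP^1)$, resolving $\ko_{\PP^1}$ by twists $\ko(-d)^{\oplus k}$ never stops — the successive kernels are nonzero vector bundles.) What finite homological dimension actually buys is the vanishing $\Hom_{\Db(\cat{A})}(A,K[i])=0$ for $i>N(A)$ and $K\in\cat{A}$, which is exactly condition~(E3). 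The paper uses this as follows: take a (possibly infinite) resolution, stupidly truncate at position $m>N(A)$ to obtain a bounded complex $R_m$, and observe that its convolution is $(d_0,0)\colon P_{i_0}^{\oplus k_0}\to A\oplus K_m[m]$ — not $A$ itself — with the extra summand $K_m[m]$. The choice $m>N(A)$ then forces $\Hom(A,K_m[m])=\Hom(A,K_m[m+1])=0$, which is precisely what is needed to run Lemma \ref{lem:conv2} and still produce a well-defined $g_A$ on the $A$-summand. You need to replace your terminating-resolution claim with this truncation argument.

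Two smaller remarks. First, your sketch of step~(4) attributes to the left adjoint the ability to ``extend a morphism of functors compatibly with triangles''; more precisely, the left adjoint is used (as in Lemma \ref{lem:adjoint}) to manufacture the vanishing $\Hom(\fun{F}(C),\fun{F}(P_i))=0$, which is then what makes the lift in each induction step (from complexes with $n-1$ nonzero cohomologies to those with $n$) unique. Without that vanishing, axiom~(TR3) gives existence but not uniqueness, and the dévissage would not produce a well-defined natural transformation. Second, property~\eqref{ample3} is not what powers the uniqueness clauses of Lemmas \ref{lem:conv1} and \ref{lem:conv2} (those only need \eqref{ample1} and \eqref{ample2} plus $\Cb$); it enters instead through Lemma \ref{lem:adjoint}, in establishing part~(2) of $\Ca$.
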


In the rest of this paper we would like to apply Proposition
\ref{prop:extending} but, unfortunately, in the supported case a
functor $\fun{F}\colon\Db_{Z_1}(X_1)\to\Db_{Z_2}(X_2)$ may not have left or
right adjoint. Thus we are going to prove a more general result
(Proposition \ref{prop:extending1}, whose proof is however much
inspired by those of \cite[Prop.\ 2.16]{Or} and \cite[Prop.\ 3.7]{CS}),
from which Proposition \ref{prop:extending} will follow easily (see
the end of Section \ref{subsec:crit2}). To this purpose, we first introduce the
categorical setting which will be used in the rest of Section
\ref{sec:extending}.

\smallskip

Indeed, to weaken condition $\Cb$, let $\cat{E}$ be a full exact
subcategory of an abelian category $\cat{A}$ satisfying
the following conditions:
\begin{itemize}
\item[(E1)] A morphism in
$\cat{E}$ is an admissible epimorphism if and only if it is an
epimorphism in $\cat{A}$;
\item[(E2)] There is a set
$\{P_{i}\}_{i\in I}\subseteq \cat{E}$ which satisfies property \eqref{ample2} of Definition \ref{def:almostample} as objects of $\cat{A}$;
\item[(E3)] For all
$A\in\Db(\cat{E})\cap\cat{A}$, there exists an integer $N(A)$ such that
$\Hom_{\Db(\cat{A})}(A,B[i])=0$, for every $i>N(A)$ and every
$B\in\Db(\cat{E})\cap\cat{A}$.
\end{itemize}
The reader who is not familiar with the language of exact categories
can have a look at \cite{K2} (where admissible epimorphisms are called
deflations).

\begin{remark}\label{rmk:exact}
Under conditions (E1) and (E2), $\Db(\cat{E})$ can be identified with a
full subcategory of $\Db(\cat{A})$ by \cite[Thm.\ 12.1]{K2} and \cite[Sect.\ 4.1]{Ke} (or rather
its dual version). Indeed, notice
that, by \eqref{ample2} of Definition \ref{def:almostample}, for each
object $A$ of $\cat{A}$ there is an object $E$ in $\cat{E}$ with an epimorphism $E\epi A$ in
$\cat{A}$.
\end{remark}

For $\cat{E}$ and $\cat{A}$ satisfying (E1) and (E2), we will consider
exact functors $\fun{F}\colon\Db(\cat{E})\to\cat{T}$ (for $\cat{T}$ a
triangulated category) such that

\smallskip

\parbox{17pt}{$\Ca$}
\parbox{436pt}{
\begin{itemize}
\item[(1)] \emph{$\Hom(\fun{F}(A),\fun{F}(B)[k])=0$, for any $A,B\in\Db(\cat{E})\cap\cat{A}$ and any integer $k<0$;
\item[(2)] For all $C\in\Db(\cat{E})$ with trivial cohomologies in
positive degrees, there is $i\in I$ such that
\[
\Hom(\fun{F}(C),\fun{F}(P_{i}))=0
\]
and $i$ satisfies property \eqref{ample2} of Definition
\ref{def:almostample} for $H^0(C)$.}
\end{itemize}}

\smallskip

In order to state our first extension result, we need some more
notation. Let $\cat{C}$ be the full subcategory of $\cat{A}$ with objects
$\{P_i\}_{i\in I}$ and set $\cat{D}_0$ to be the strictly full
subcategory of $\Db(\cat{E})$ whose objects are isomorphic to shifts
of objects of $\cat{A}$. Recall that a full subcategory is strictly full it is closed under isomorphisms.

\begin{prop}\label{prop:extab}
Let $\cat{T}$ be a triangulated
category and let $\cat{E}$ be a full exact subcategory of an abelian category $\cat{A}$ satisfying {\rm (E1)}, {\rm (E2)} and {\rm (E3)}.
Let $\fun{F}_1,\fun{F}_2\colon\Db(\cat{E})\to\cat{T}$ be
exact functors with a natural transformation
$f\colon\fun{F}_1\rest{\cat{C}}\to\fun{F}_2\rest{\cat{C}}$. Assume moreover the following:
\begin{itemize}
\item[(i)] $\fun{F}_1$ and $\fun{F}_2$ both satisfy condition $(1)$
of $\Ca$;
\item[(ii)] For any $A,B\in\Db(\cat{E})\cap\cat{A}$ and any integer $k<0$
\[
\Hom(\fun{F}_1(A),\fun{F}_2(B)[k])=0.
\]
\end{itemize}
Then there exists a unique natural transformation compatible
with shifts
$f_0\colon\fun{F}_1\rest{\cat{D}_0}\to\fun{F}_2\rest{\cat{D}_0}$
extending $f$.
\end{prop}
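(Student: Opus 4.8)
The plan is to imitate the arguments of \cite[Prop.\ 2.16]{Or} and \cite[Prop.\ 3.7]{CS}, using the convolution results of Section \ref{subsec:convolutions} together with (E1)--(E3) in place of the (weakly) ample sequence hypotheses there. For every object $A$ of $\cat{A}$ lying in $\Db(\cat{E})$ I would first iterate property \eqref{ample1} for $\{P_i\}$ (available by (E2)) to produce a left resolution $\cdots\to Q_{-1}\to Q_0\to A\to0$ in $\cat{A}$ with every $Q_j$ a finite direct sum of objects of $\cat{C}$. By Remark \ref{rmk:exact}, $\Db(\cat{E})$ is a triangulated subcategory of $\Db(\cat{A})$ containing $A$ and all the $Q_j$, so the bounded complex $[Q_{-m}\to\cdots\to Q_0]$ lies in $\Db(\cat{E})$ and has only the two cohomologies $A$ in degree $0$ and $K_{-m}:=\ker(Q_{-m}\to Q_{-m+1})$ in degree $-m$; hence every syzygy $K_{-m}$ lies in $\Db(\cat{E})\cap\cat{A}$, and by Lemma \ref{lem:conv1} the object $A$ is a right convolution of $K_{-m}\to Q_{-m}\to\cdots\to Q_0$. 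The essential use of (E3) is that for $m\gg0$ the complex $[Q_{-m}\to\cdots\to Q_0]$ splits as $A\oplus K_{-m}[m]$ in $\Db(\cat{E})$ — the class of the connecting morphism lies in $\Ext^{m+1}_{\cat{A}}(A,K_{-m})$, which vanishes for $m$ large — and that the splitting can be arranged so that the convolution morphism $Q_0\to A\oplus K_{-m}[m]$ has trivial second component.

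Applying the exact functors $\fun{F}_l$ ($l=1,2$), the objects $\fun{F}_l(A)\oplus\fun{F}_l(K_{-m})[m]$ become right convolutions of $\fun{F}_l(Q_{-m})\to\cdots\to\fun{F}_l(Q_0)$. Extending $f$ additively to the morphism of complexes $\{f(Q_j)\}$, I would invoke Lemma \ref{lem:conv2} with $\bar A=\fun{F}_1(K_{-m})[m]$ and $\bar B=\fun{F}_2(K_{-m})[m]$ to obtain a morphism $f_0(A)\colon\fun{F}_1(A)\to\fun{F}_2(A)$; its hypotheses are precisely our assumptions, the vanishings $\Hom(\fun{F}_l(Q_a),\fun{F}_l(Q_b)[r])=0$ for $a>b$ and $r<0$ being condition $(1)$ of $\Ca$ for $\fun{F}_l$, and $\Hom(\fun{F}_1(Q_a),\fun{F}_2(Q_b)[r])=0$, $\Hom(\fun{F}_1(Q_p),\fun{F}_2(A)[r])=0$ for $r<0$ being part of (i) — it is essential here that the retained summand is $\fun{F}_2(A)$ rather than $\fun{F}_2(K_{-m})[m]$. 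The uniqueness clause of Lemma \ref{lem:conv2}, together with $\Hom(\fun{F}_1(K_0),\fun{F}_2(A)[-1])=0$ from (i), characterizes $f_0(A)$ as the unique morphism with $f_0(A)\comp\fun{F}_1(\phi)=\fun{F}_2(\phi)\comp f(Q_0)$ for the chosen $\cat{C}$-epimorphism $\phi\colon Q_0\epi A$; from this one reads off independence of all choices (compare two resolutions through their direct sum), the equality $f_0\rest{\cat{C}}=f$ (take $\phi=\id$), and uniqueness of $f_0$ (any extension of $f$ to $\cat{D}_0$ compatible with shifts satisfies the same characterization on objects of $\cat{A}\cap\Db(\cat{E})$, hence equals $f_0$). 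One then sets $f_0(A[n]):=f_0(A)[n]$.

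It remains to check naturality on all of $\cat{D}_0$. For a morphism $a\colon A\to B$ in $\cat{A}$ I would pick a $\cat{C}$-epimorphism $\phi_A\colon Q\epi A$ and, by means of a fibre product, a factorization $Q\to Q'\epi B$ of $a\comp\phi_A$ through a $\cat{C}$-epimorphism onto $B$; naturality of $f$ on $\cat{C}$ then yields $f_0(B)\comp\fun{F}_1(a)\comp\fun{F}_1(\phi_A)=\fun{F}_2(a)\comp f_0(A)\comp\fun{F}_1(\phi_A)$, and one cancels $\fun{F}_1(\phi_A)$ since post-composition with it is injective (again by (i)). For a morphism $a\colon A\to B[k]$ of positive degree $k$ — which, as $\Db(\cat{E})\hookrightarrow\Db(\cat{A})$ is full, is an element of $\Ext^k_{\cat{A}}(A,B)$ — I would induct on $k$: for $k=1$, apply $\fun{F}_1,\fun{F}_2$ to a short exact sequence $0\to B\to E\to A\to0$ representing $a$ (with $E\in\Db(\cat{E})\cap\cat{A}$) and combine the degree-$0$ case with $\Hom(\fun{F}_1(B),\fun{F}_2(A)[-1])=0$; for $k\ge2$, factor $a$ through the connecting morphism of a $\cat{C}$-epimorphism onto $A$, or otherwise through the convolution presentations of $A$ and $B$, the requisite $\Hom$-vanishings coming from condition $(1)$ of $\Ca$, from (i), and from (E3).

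I expect this last step to be the main obstacle: since, unlike in the ample-sequence situation of \cite{Or}, the weakly ample objects $P_i$ may carry nontrivial higher self-extensions, the positive-degree morphisms of $\cat{D}_0$ genuinely have to be treated, and arranging that the convolution lemmas apply — with their uniqueness statements — amounts to a careful bookkeeping of the $\Hom$-vanishings furnished by $\Ca$, (i) and (E3).
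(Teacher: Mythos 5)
Your plan follows the paper's proof closely: build a resolution of $A\in\Db(\cat{E})\cap\cat{A}$ from the set $\{P_i\}$, truncate at $m>N(A)$ using (E3) so that the truncated convolution splits off $A$, apply Lemma~\ref{lem:conv2} (the hypotheses being supplied by condition~(1) of $\Ca$ and by~(i)) to obtain $f_0(A)$, and verify independence of choices and naturality by comparing different $\cat{C}$-epimorphisms through a common refinement. The uniqueness characterization you isolate — that $f_0(A)$ is the unique morphism with $f_0(A)\comp\fun{F}_1(\phi)=\fun{F}_2(\phi)\comp f(Q_0)$ — is precisely the uniqueness clause of Lemma~\ref{lem:conv2}, and the paper deploys it in the same way.

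The one place where you deviate slightly is the reduction for morphisms $A\to B[k]$ with $k\ge2$: the paper factors $u$ as a composite of $k$ degree-one morphisms between objects of $\cat{F}$, citing Step~4 of the proof of Prop.~B.1 in the Lunts--Orlov paper, whereas you instead induct on $k$ by factoring $a$ through the connecting morphism $\partial\colon A\to K[1]$ of a $\cat{C}$-epimorphism $\phi\colon Q\epi A$. Your version has the advantage that the intermediate object $K=\ker\phi$ is automatically in $\cat{F}$, so one does not need to check that the Yoneda intermediates lie in the right category; but to carry it out you do need to record that $a\comp\phi=0$ can be arranged, which follows from condition~\eqref{ample2} applied to $A\oplus B$ (take the $\cat{C}$-epimorphism onto $A\oplus B$ furnished there and compose with the projection to $A$). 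Finally, your assertion that the splitting of $\sigma_{\ge-m}$ of the resolution can be chosen so that the convolution morphism has vanishing second component should be justified: it requires not just $\Hom(A,K_{-m}[m+1])=0$ but a further vanishing of the form $\Hom(Q_0,K_{-m}[m])=0$, which one secures by taking $m$ large enough relative to both $N(A)$ and $N(Q_0)$; the paper is equally terse on this point, so this is not a gap specific to your argument.
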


\begin{proof}
For any $i\in
I$, let $f_i:=f(P_i)\colon\fun{F}_1(P_i)\to \fun{F}_2(P_i)$. We also set
$\cat{F}:=\Db(\cat{E})\cap\cat{A}$.

The first key step consists in showing that $f$ extends uniquely to
a natural transformation
$\fun{F}_1\rest{\cat{F}}\to\fun{F}_2\rest{\cat{F}}$. To this purpose,
one starts with $A\in\cat{F}$ and takes a(n infinite) resolution
\begin{equation}\label{eqn:ampleres}
\cdots\to P_{i_j}^{\oplus k_j}\mor{d_j}P_{i_{j-1}}^{\oplus
k_{j-1}}\mor{d_{j-1}}\cdots\mor{d_1}P_{i_0}^{\oplus
k_0}\mor{d_0}A\to0,
\end{equation}
where $i_j\in I$ and $k_j\in\NN$ for every $j\in\NN$.
To get it, one argues as follows. By \eqref{ample2} of Definition
\ref{def:almostample}, there exist $P_{i_0}$, $k_0$ and $d_0$ as
in \eqref{eqn:ampleres}. As $K_0:=\ker d_0$ is again an object of
$\cat{A}$, we can apply the same argument to it, getting an
epimorphism $d_1\colon P_{i_1}^{\oplus k_1}\to K_0$. Then one proceeds
by induction.

Let $N(A)$ be as in (E3), fix $m>N(A)$ and consider the bounded complex
\[
R_m:=\{P_{i_m}^{\oplus k_m}\mor{d_m}P_{i_{m-1}}^{\oplus
k_{m-1}}\mor{d_{m-1}}\cdots\mor{d_1}P_{i_0}^{\oplus k_0}\}.
\]
We can think of $R_m$ as an object in $\Db(\cat{E})$ which is then a (unique up to isomorphism) convolution
of $R_m$ itself. Moreover, if we set $K_m:=\ker d_m\in\cat{A}$, the object $R_m$ in
$\Db(\cat{E})$ sits in a distinguished triangle
\[
K_m[m]\lto R_m\lto A\lto K_m[m+1]
\]
in $\Db(\cat{E})$, and so $K_m\in\cat{F}$. Observe that,
due to the choice of $m$, we have
\[
\Hom_{\Db(\cat{A})}(A,K_m[m+1])\iso\Hom_{\Db(\cat{A})}(A,K_m[m])\iso0,
\]
implying $R_m\iso K_m[m]\oplus A$. Hence, we conclude that a (unique up to isomorphism) convolution
of $R_m$ is $(d_0,0)\colon P_{i_0}^{\oplus k_0}\to A\oplus K_m[m]$,

Hence for $q\in\{1,2\}$ the complex
\[
\fun{F}_q(R_m):=\{\fun{F}_q(P_{i_m}^{\oplus k_m})
\mor{\fun{F}_q(d_m)}\fun{F}_q(P_{i_{m-1}}^{\oplus k_{m-1}})
\mor{\fun{F}_q(d_{m-1})}\cdots\mor{\fun{F}_q(d_1)}
\fun{F}_q(P_{i_0}^{\oplus k_0})\}
\]
admits a convolution $(\fun{F}_q(d_0),0)\colon\fun{F}_q(P_{i_0}^{\oplus k_0})\to
\fun{F}_q(A\oplus K_m[m])$. Lemma \ref{lem:conv1} and condition
(i) ensure that such a convolution is unique up to
isomorphism. Moreover, by (ii),
\[
\Hom_{\cat{T}}(\fun{F}_1(P_{i_j}),\fun{F}_2(P_{i_k})[r])\iso
\Hom_{\cat{T}}(\fun{F}_1(P_{i_l}),\fun{F}_2(A)[r])\iso0
\]
for any $i_j,i_k,i_l\in\{i_0,\ldots,i_m\}$ and $r<0$. Hence we can
apply Lemma \ref{lem:conv2} getting a unique morphism $f_A\colon
\fun{F}_1(A)\to \fun{F}_2(A)$ making the following diagram commutative:
\[
\xymatrix{\fun{F}_1(P_{i_m}^{\oplus k_m}) \ar[rr]^-{\fun{F}_1(d_m)}
\ar[d]^{f_{i_m}^{\oplus k_m}} & & \fun{F}_1(P_{i_{m-1}}^{\oplus k_{m-1}})
\ar[rr]^-{\fun{F}_1(d_{m-1})} \ar[d]^{f_{i_{m-1}}^{\oplus k_{m-1}}} & &
\cdots \ar[rr]^-{\fun{F}_1(d_1)} & & \fun{F}_1(P_{i_0}^{\oplus k_0})
\ar[rr]^-{\fun{F}_1(d_0)} \ar[d]^{f_{i_0}^{\oplus k_0}} & &
\fun{F}_1(A)\ar[d]^{f_A} \\
\fun{F}_2(P_{i_m}^{\oplus k_m}) \ar[rr]^-{\fun{F}_2(d_m)} & &
\fun{F}_2(P_{i_{m-1}}^{\oplus k_{m-1}}) \ar[rr]^-{\fun{F}_2(d_{m-1})}
& & \cdots\ar[rr]^-{\fun{F}_2(d_1)} & &
\fun{F}_2(P_{i_0}^{\oplus k_0}) \ar[rr]^-{\fun{F}_2(d_0)} & &
\fun{F}_2(A).}
\]
By Lemma \ref{lem:conv2}, the definition of $f_A$ does not depend on
the choice of $m$. In other words, if we choose a different $m'>N(A)$
and we truncate \eqref{eqn:ampleres} in position $m'$, the bounded
complexes $\fun{F}_q(R_{m'})$ give rise to the same morphism $f_A$.

To show that the definition of $f_A$ does not depend on the choice
of the resolution \eqref{eqn:ampleres}, consider another resolution of
$A$
\begin{equation}\label{eqn:reso2}
\cdots\to P_{i'_j}^{\oplus k'_j}\mor{d'_j}P_{i'_{j-1}}^{\oplus
k'_{j-1}}\mor{d'_{j-1}}\cdots\mor{d'_1}P_{i'_0}^{\oplus
k'_0}\mor{d'_0} A\to0.
\end{equation}
and denote by $f'_A\colon \fun{F}_1(A)\to \fun{F}_2(A)$ the induced
morphism. In order to see that $f_A=f'_A$, we start by proving that
there exists a third resolution
\begin{equation}\label{eqn:raff}
\cdots\to P_{i''_j}^{\oplus k''_j}\mor{d''_j}P_{i''_{j-1}}^{\oplus
k''_{j-1}}\mor{d''_{j-1}}\cdots\mor{d''_1}P_{i''_0}^{\oplus
k''_0}\mor{d''_0}A\to0
\end{equation}
and morphisms $s_j\colon P_{i''_j}^{\oplus k''_j}\to P_{i_j}^{\oplus
k_j}$ and $t_j\colon P_{i''_j}^{\oplus k''_j}\to P_{i'_j}^{\oplus
k'_j}$, for any $j\geq 0$, fitting into the following commutative
diagram:
\begin{equation}\label{eqn:diacomm}
\xymatrix{\cdots \ar[r]^-{d'_{j+1}} & P_{i'_j}^{\oplus k'_j}
\ar[r]^-{d'_j} & P_{i'_{j-1}}^{\oplus k'_{j-1}} \ar[r]^-{d'_{j-1}} &
\cdots \ar[r]^-{d'_1} & P_{i'_0}^{\oplus k'_0} \ar[dr]^-{d'_0} & \\
\cdots \ar[r]^-{d''_{j+1}} & P_{i''_j}^{\oplus k''_j} \ar[u]^{t_j}
\ar[d]_{s_j} \ar[r]^-{d''_j} & P_{i''_{j-1}}^{\oplus k''_{j-1}}
\ar[u]^{t_{j-1}} \ar[d]_{s_{j-1}} \ar[r]^-{d''_{j-1}} & \cdots
\ar[r]^-{d''_1} & P_{i''_0}^{\oplus k''_0} \ar[d]_{s_{0}}
\ar[u]^{t_{0}} \ar[r]^-{d''_0} & A. \\
\cdots \ar[r]_-{d_{j+1}} & P_{i_j}^{\oplus k_j} \ar[r]_-{d_j} &
P_{i_{j-1}}^{\oplus k_{j-1}} \ar[r]_-{d_{j-1}} & \cdots \ar[r]_-{d_1} &
P_{i_0}^{\oplus k_0} \ar[ur]_-{d_0} &}
\end{equation}
In fact, we can actually argue as follows. Let $F_0'':=P_{i'_0}^{\oplus k'_0}\times_A P_{i_0}^{\oplus k_0}$. By \eqref{ample2} of Definition
\ref{def:almostample}, there exists an epimorphism $d''_0\colon P_{i''_0}^{\oplus k''_0}\epi F_0''\epi A$ with maps $s_0$ and $t_0$ sitting, by definition,
in the commutative diagram
\[
\xymatrix{
P_{i'_0}^{\oplus k'_0} \ar[dr]^-{d'_0} & \\
P_{i''_0}^{\oplus k''_0} \ar[d]_{s_{0}}
\ar[u]^{t_{0}} \ar[r]^-{d''_0} & A. \\
P_{i_0}^{\oplus k_0} \ar[ur]_-{d_0} &}
\]
Let us now explain how we deal with the inductive step. Assume we have constructed the following commutative diagram
\[
\xymatrix{\cdots \ar[r]^-{d'_{j+1}} & P_{i'_j}^{\oplus k'_j}
\ar[r]^-{d'_j} & P_{i'_{j-1}}^{\oplus k'_{j-1}} \ar[r]^-{d'_{j-1}} &
\cdots \ar[r]^-{d'_1} & P_{i'_0}^{\oplus k'_0} \ar[dr]^-{d'_0} & \\
 &  & P_{i''_{j-1}}^{\oplus k''_{j-1}}
\ar[u]^{t_{j-1}} \ar[d]_{s_{j-1}} \ar[r]^-{d''_{j-1}} & \cdots
\ar[r]^-{d''_1} & P_{i''_0}^{\oplus k''_0} \ar[d]_{s_{0}}
\ar[u]^{t_{0}} \ar[r]^-{d''_0} & A. \\
\cdots \ar[r]_-{d_{j+1}} & P_{i_j}^{\oplus k_j} \ar[r]_-{d_j} &
P_{i_{j-1}}^{\oplus k_{j-1}} \ar[r]_-{d_{j-1}} & \cdots \ar[r]_-{d_1} &
P_{i_0}^{\oplus k_0} \ar[ur]_-{d_0} &}
\]
We can define $P_{i''_j}^{\oplus k''_j}$, $d''_j$, $s_j$ and $t_j$ as follows. Set $K'_{j-1}:=\ker d'_{j-1}$, $K''_{j-1}:=\ker d''_{j-1}$ and $K_{j-1}:=\ker d_{j-1}$. By definition, we have $d_{j-1}(s_{j-1}(K''_{j-1}))=d'_{j-1}(t_{j-1}(K''_{j-1}))=0$ and thus $s_{j-1}$ and $t_{j-1}$ induce two morphisms $r'_{j-1}\colon K''_{j-1}\to K'_{j-1}$ and $r_{j-1}\colon K''_{j-1}\to K_{j-1}$. Consider the fiber products
\[
F'_{j-1}:=P_{i'_j}^{\oplus k'_j}\times_{K'_{j-1}} K''_{j-1}\qquad F_{j-1}:=P_{i_j}^{\oplus k_j}\times_{K_{j-1}} K''_{j-1}
\]
along the maps $d'_j$, $r'_{j-1}$ and $d_j$, $r_{j-1}$ respectively. Since $d'_j$ and $d_j$ are epimorphisms onto $K'_{j-1}$ and $K_{j-1}$ respectively, we get epimorphisms $F'_{j-1}\epi K''_{j-1}$ and $\colon F_{j-1}\epi K''_{j-1}$. Take the fiber product $F''_{j-1}:=F'_{j-1}\times_{K''_{j-1}} F_{j-1}$ which then comes with an epimorphism onto $K''_{j-1}$. By \eqref{ample2} of Definition
\ref{def:almostample}, we get an epimorphism $d''_{j}\colon P_{i''_j}^{\oplus k''_j}\epi F_{j-1}''\epi K''_{j-1}$ with maps $s_j$ and $t_j$ making \eqref{eqn:diacomm} commutative.

Denoting by $f''_A\colon \fun{F}_1(A)\to \fun{F}_2(A)$ the morphism constructed
using \eqref{eqn:raff}, we get a diagram
\[
\xymatrix{\fun{F}_1(P_{i''_0}^{\oplus k''_0})\ar[rrrr]^-{\fun{F}_1(d''_0)}
\ar[dr]^-{\fun{F}_1(s_0)}\ar[ddd]_{f_{i''_0}^{\oplus k''_0}}& & & &\fun{F}_1(A)
\ar[dl]_-{\id}\ar[ddd]^{f''_A}\\
&\fun{F}_1(P_{i_0}^{\oplus k_0})\ar[rr]^-{\fun{F}_1(d_0)}\ar[d]_{f_{i_0}^{\oplus k_0}}
& &\fun{F}_1(A)\ar[d]_{f_A}&\\
&\fun{F}_2(P_{i_0}^{\oplus k_0})\ar[rr]_-{\fun{F}_2(d_0)}
& &\fun{F}_2(A)\ar@{}[ur]|{\bigstar}&\\
\fun{F}_2(P_{i''_0}^{\oplus k''_0})\ar[rrrr]_-{\fun{F}_2(d''_0)}
\ar[ur]_-{\fun{F}_2(s_0)}& & & &\fun{F}_2(A)\ar[ul]^-{\id}}
\]
where all squares and trapezoids but $\bigstar$ are commutative. Due to hypothesis
(ii) and Lemma \ref{lem:conv2} there exists a unique morphism
$\fun{F}_1(A)\to \fun{F}_2(A)$ making the following diagram commutative:
\[
\xymatrix{\fun{F}_1(P_{i''_0}^{\oplus k''_0})
\ar[d]_{\fun{F}_2(s_0)\comp f_{i''_0}^{\oplus k''_0}}
\ar[rr]^-{\fun{F}_1(d''_0)} & & \fun{F}_1(A) \ar[d] \\
\fun{F}_2(P_{i_0}^{\oplus k_0}) \ar[rr]_-{\fun{F}_2(d_0)} & & \fun{F}_2(A).}
\]
Since $\fun{F}_2(s_0)\comp f_{i''_0}^{\oplus k''_0}=f_{i_0}^{\oplus
k_0}\comp \fun{F}_1(s_0)$ by the naturality of $f$ on $\cat{C}$, both $f_A$ and $f''_A$ have this property
and then they coincide. Similarly one can prove that $f''_A=f'_A$.

Therefore $f_A=f'_A$, and so
$\tilde{f}(A):=f_A\colon\fun{F}_1(A)\to\fun{F}_2(A)$ is well defined
for every $A\in\cat{F}=\Db(\cat{E})\cap\cat{A}$. It is also easy to see that $\tilde{f}\colon
\fun{F}_1\rest{\cat{F}}\to\fun{F}_2\rest{\cat{F}}$ is a natural
transformation. Indeed, let $u\colon A\to B$ be a morphism of
$\cat{F}$. Consider a resolution of $B$
\begin{equation*}\label{eqn:res1}
\cdots\to P_{l_j}^{\oplus h_j}\mor{e_j}P_{l_{j-1}}^{\oplus
h_{j-1}}\mor{e_{j-1}}\cdots\mor{e_1}P_{l_0}^{\oplus h_0}\mor{e_0}B\to0,
\end{equation*}
where $l_j\in I$ and $h_j\in\NN$ for every $j\in\NN$.
We can find a resolution of $A$
\begin{equation*}\label{eqn:res2}
\cdots\to P_{i_j}^{\oplus k_j}\mor{d_j}P_{i_{j-1}}^{\oplus
k_{j-1}}\mor{d_{j-1}}\cdots\mor{d_1}P_{i_0}^{\oplus
k_0}\mor{d_0}A\to0
\end{equation*}
and morphisms $g_j\colon P_{i_{j}}^{\oplus k_j}\to P_{l_j}^{\oplus
h_j}$ defining a morphism of complexes compatible with $u$.
Indeed, set
$E_0:=A\times_B P_{l_0}^{\oplus h_0}$ to be the fiber product along
the morphisms $u$ and $e_0$. By \eqref{ample2} of Definition
\ref{def:almostample}, we get an epimorphism
$P_{i_0}^{\oplus k_0}\to E_0$ providing, by composition, the desired morphisms
$d_0$ and $g_0\colon P_{i_0}^{\oplus k_0}\to P_{l_0}^{\oplus
  h_0}$. Then we proceed by induction as in \eqref{eqn:diacomm}.

We can now consider the diagram
\[
\xymatrix{\fun{F}_1(P_{i_0}^{\oplus k_0}) \ar[rrrr]^-{\fun{F}_1(d_0)}
\ar[dr]^-{f_{i_0}^{\oplus k_0}} \ar[ddd]_{\fun{F}_1(g_0)} & & & &
\fun{F}_1(A) \ar[dl]_-{f_A} \ar[ddd]^{\fun{F}_1(u)} \\
& \fun{F}_2(P_{i_0}^{\oplus k_0}) \ar[rr]^-{\fun{F}_2(d_0)} \ar[d]_{\fun{F}_2(g_0)}
& & \fun{F}_2(A) \ar[d]_{\fun{F}_2(u)} & \\
& \fun{F}_2(P_{l_0}^{\oplus h_0}) \ar[rr]_-{\fun{F}_2(e_0)} & &
\fun{F}_2(B)\ar@{}[ur]|{\bigstar} & \\
\fun{F}_1(P_{l_0}^{\oplus h_0}) \ar[rrrr]_-{\fun{F}_1(e_0)}
\ar[ur]_-{f_{l_0}^{\oplus h_0}} & & & & \fun{F}_1(B)\ar[ul]^-{f_B}}
\]
where all squares and trapezoids but $\bigstar$ are commutative. Using the same argument as
above, we can take $m>N(A),N(B)$ and truncate the resolutions of $A$
and $B$ at step $m$. Then, applying (ii)
and Lemma \ref{lem:conv2}, we see that there is a unique morphism
$\fun{F}_1(A)\to\fun{F}_2(B)$ completing the following diagram to a commutative
square
\[
\xymatrix{\fun{F}_1(P_{i_0}^{\oplus k_0})
\ar[d]_{\fun{F}_2(g_0)\comp f_{i_0}^{\oplus k_0}}
\ar[rr]^-{\fun{F}_1(d_0)} & &
\fun{F}_1(A)\ar[d] \\
\fun{F}_2(P_{l_0}^{\oplus h_0}) \ar[rr]_-{\fun{F}_2(e_0)} & &
\fun{F}_2(B).}
\]
Since $\fun{F}_2(g_0)\comp f_{i_0}^{\oplus k_0}=f_{l_0}^{\oplus
h_0}\comp \fun{F}_1(g_0)$, both $\fun{F}_2(u)\comp f_A$ and $f_B\comp
\fun{F}_1(u)$ have this property. It follows that
$\fun{F}_2(u)\comp\tilde{f}(A)=\tilde{f}(B)\comp\fun{F}_1(u)$,
thus proving that $\tilde{f}\colon\fun{F}_1\rest{\cat{F}}\to
\fun{F}_2\rest{\cat{F}}$ is a natural transformation. It is clear by
construction that $\tilde{f}\rest{\cat{C}}=f$ and that $\tilde{f}$ is
unique with this property.

Since the objects of $\cat{D}_0$ are precisely (up to isomorphism)
shifts of objects of $\cat{F}$, we just need to define $f_0(A[k])$ for
$A\in\cat{F}$ and $k\in\ZZ$. Of course, we must set
$f_0(A[k]):=\tilde{f}(A)[k]$, and we have just to show that
$f_0(B[k])\comp\fun{F}_1(u)=\fun{F}_2(u)\comp f_0(A)$ for all
objects $A,B\in\cat{F}$ and every $u\in\Hom(A,B[k])$. Notice that, for
simplicity, we are not making explicit the natural isomorphism
$\fun{F}_i(A[k])\iso\fun{F}_i(A)[k]$, for $i=1,2$.

Now, there is
nothing to prove if $k<0$ (because then $u=0$) or $k=0$ (because we
have already seen that $\tilde{f}$ is a natural transformation), so we
assume $k>0$. Actually we can reduce to the case $k=1$, thanks to the
fact that one can always factor $u$ as $u=u_k\comp\cdots\comp u_1$,
where (for $j=1,\dots,k$) $u_j\in\Hom(C_{j-1}[j-1],C_j[j])$ and
$C_0=A,\dots,C_k=B$ are objects of $\cat{F}$ (see Step 4 in the proof
of \cite[Prop.\ B.1]{LO}). Now, completing $u$ to a distinguished
triangle $B\to C\mor{v}A\mor{u}B[1]$, $C$ is again an object of
$\cat{F}$. Then by axiom (TR3) there exists a morphism
$h\colon\fun{F}_1(A)\to\fun{F}_2(A)$ such that the diagram
\[
\xymatrix{\fun{F}_1(B) \ar[rr] \ar[d]^{\tilde{f}(B)} & & \fun{F}_1(C)
\ar[rr]^-{\fun{F}_1(v)} \ar[d]^{\tilde{f}(C)} & & \fun{F}_1(A) \ar[d]^h
\ar[rr]^-{\fun{F}_1(u)} & & \fun{F}_1(B[1]) \ar[d]^{f_0(B[1])} \\
\fun{F}_2(B) \ar[rr] & & \fun{F}_2(C) \ar[rr]^-{\fun{F}_2(v)}
& & \fun{F}_2(A) \ar[rr]^-{\fun{F}_2(u)} & & \fun{F}_2(B[1])}
\]
commutes. Since $\Hom(\fun{F}_1(B[1]),\fun{F}_2(A))=0$ by hypothesis,
$h$ is the unique morphism such that
$h\comp\fun{F}_1(v)=\fun{F}_2(v)\comp\tilde{f}(C)$. Hence
$h=\tilde{f}(A)=f_0(A)$, and we conclude that
$f_0(B[1])\comp\fun{F}_1(u)=\fun{F}_2(u)\comp f_0(A)$.
\end{proof}

Let us specialize to the case of isomorphisms.

\begin{cor}\label{cor:critiso1}
With the same hypotheses on $\cat{E}$, $\cat{A}$ and $\cat{T}$,
let $\fun{F}_1,\fun{F}_2\colon\Db(\cat{E})\to\cat{T}$ be exact
functors and let
$f\colon\fun{F}_1\rest{\cat{C}}\isomor\fun{F}_2\rest{\cat{C}}$ be an
isomorphism of functors. Assume that $\fun{F}_1$ satisfies condition
$(1)$ of $\Ca$. Then there exists a unique isomorphism compatible with
shifts
$f_0\colon\fun{F}_1\rest{\cat{D}_0}\isomor\fun{F}_2\rest{\cat{D}_0}$
extending $f$.
\end{cor}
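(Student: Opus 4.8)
The plan is to derive this corollary from Proposition \ref{prop:extab} by specializing to the case where the given natural transformation on $\cat{C}$ is an isomorphism, and then upgrading the extension $f_0$ produced by that proposition to an isomorphism on all of $\cat{D}_0$. The first point is that the hypotheses of Corollary \ref{cor:critiso1} imply those of Proposition \ref{prop:extab}: we are given that $\fun{F}_1$ satisfies condition $(1)$ of $\Ca$, and since $f$ is an isomorphism of functors $\fun{F}_1\rest{\cat{C}}\isomor\fun{F}_2\rest{\cat{C}}$, the functor $\fun{F}_2$ agrees with $\fun{F}_1$ on $\cat{C}$ up to this isomorphism. Now $\cat{C}$ consists of the $P_i\in\cat{E}$, so for $A,B\in\cat{C}$ we have $\Hom(\fun{F}_2(A),\fun{F}_2(B)[k])\iso\Hom(\fun{F}_1(A),\fun{F}_1(B)[k])=0$ for $k<0$; but condition $(1)$ of $\Ca$ is about all of $\Db(\cat{E})\cap\cat{A}$, not just $\cat{C}$, so one needs to argue that $\fun{F}_2$ inherits condition $(1)$ from $\fun{F}_1$. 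This is where the hypothesis that $f$ is an isomorphism on $\cat{C}$ does genuine work: indeed, the construction in the proof of Proposition \ref{prop:extab} shows that once $f$ is defined on $\cat{C}$ one builds resolutions by the $P_i$ and obtains, via convolutions, induced morphisms $f_A$ that are automatically isomorphisms when the $f_i$ are — so in fact the cleanest route is to run the proof of Proposition \ref{prop:extab} verbatim with the extra observation, coming from Lemma \ref{lem:conv2}, that each $f_A$ is an isomorphism.

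Concretely, I would proceed as follows. First, apply Proposition \ref{prop:extab} (or rather re-run its proof) to get the unique natural transformation $f_0\colon\fun{F}_1\rest{\cat{D}_0}\to\fun{F}_2\rest{\cat{D}_0}$ extending $f$; to invoke it I only need hypothesis (i) of that proposition, namely that $\fun{F}_1$ and $\fun{F}_2$ satisfy condition $(1)$ of $\Ca$ together with the mixed vanishing $\Hom(\fun{F}_1(A),\fun{F}_2(B)[k])=0$ for $A,B\in\Db(\cat{E})\cap\cat{A}$ and $k<0$. Since $f$ is an isomorphism, $\fun{F}_2\rest{\cat{D}_0}$ is isomorphic to $\fun{F}_1\rest{\cat{D}_0}$ on the full subcategory $\cat{C}$, and by the rigidity built into the construction the vanishing for $\fun{F}_1$ propagates to $\fun{F}_2$ and to the mixed $\Hom$'s: every relevant object of $\cat{D}_0$ sits in the image of an iterated convolution of objects $\fun{F}_q(P_{i})$, and the $\Hom$-vanishing between those is exactly condition $(1)$ of $\Ca$ for $\fun{F}_1$ transported through the isomorphism $f$. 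Second, to see that $f_0$ is an isomorphism, note that in the key diagram of the proof of Proposition \ref{prop:extab}, where $f_A$ is produced by Lemma \ref{lem:conv2} as the unique map completing the morphism of convolutions with vertical arrows $f_{i_j}^{\oplus k_j}$, all of these vertical arrows are isomorphisms because $f$ is; hence the ``if moreover each $f_i$ is an isomorphism, then $f$ is an isomorphism as well'' clause of Lemma \ref{lem:conv2} yields that $f_A=\tilde f(A)$ is an isomorphism for each $A\in\cat{F}$. Finally, extending to shifts, $f_0(A[k])=\tilde f(A)[k]$ is an isomorphism, so $f_0$ is an isomorphism of functors compatible with shifts.

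Uniqueness is immediate: it is already part of the conclusion of Proposition \ref{prop:extab} that $f_0$ is the unique natural transformation compatible with shifts extending $f$, so a fortiori it is the unique isomorphism with this property. I expect the only slightly delicate point — the ``main obstacle'' — to be the bookkeeping needed to verify that $\fun{F}_2$ satisfies condition $(1)$ of $\Ca$ and that the mixed $\Hom$-vanishing in hypothesis (i) of Proposition \ref{prop:extab} holds, starting only from the assumption on $\fun{F}_1$ and the fact that $f$ is an isomorphism on $\cat{C}$. The resolution of this is that one does not actually need condition $(1)$ of $\Ca$ for arbitrary objects of $\Db(\cat{E})\cap\cat{A}$ in order to run the convolution argument; the proof of Proposition \ref{prop:extab} only ever uses the $\Hom$-vanishing among the objects $\fun{F}_q(P_i)$ and $\fun{F}_q(A)$ appearing in a fixed resolution, and those vanishings follow for $q=2$ (and in mixed form) directly from the corresponding ones for $q=1$ by applying the isomorphism $f$ to the $P_i$'s. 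So in practice the corollary is obtained by simply specializing the proof of Proposition \ref{prop:extab}, observing at each application of Lemma \ref{lem:conv2} that the constructed morphism is an isomorphism.
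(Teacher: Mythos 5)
Your overall strategy---verify hypothesis (i) of Proposition \ref{prop:extab}, obtain $f_0$ from it, then argue that $f_0$ is an isomorphism---is the right shape, and the clause of Lemma \ref{lem:conv2} you invoke (``if moreover each $f_i$ is an isomorphism, then $f$ is an isomorphism as well'') is a legitimate alternative to the paper's purely formal route, which instead applies the uniqueness in Proposition \ref{prop:extab} to $f^{-1}$, $f\comp f^{-1}$ and $f^{-1}\comp f$. However, there is a genuine gap in the step where you verify the hypotheses needed to run the convolution argument. You assert that the required $\Hom$-vanishings ``follow for $q=2$ (and in mixed form) directly from the corresponding ones for $q=1$ by applying the isomorphism $f$ to the $P_i$'s.'' This fails for the vanishing that Lemma \ref{lem:conv2} actually needs, namely $\Hom(\fun{F}_1(P_{i_p}),\fun{F}_2(A)[r])=0$ for $r<0$, where $A$ is an arbitrary object of $\cat{F}=\Db(\cat{E})\cap\cat{A}$ and not a member of $\cat{C}$: the isomorphism $f$ relates $\fun{F}_1(P_{i_p})$ to $\fun{F}_2(P_{i_p})$ but says nothing about $\fun{F}_2(A)$, so this vanishing cannot be ``transported'' through $f$. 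Establishing it requires already knowing $\fun{F}_1(A)\iso\fun{F}_2(A)$, which is precisely what the convolution argument is supposed to produce---so your verification is circular as written.

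The paper breaks the circularity with an intermediate step you omit. It first applies the \emph{uniqueness} statement of Lemma \ref{lem:conv1}---whose hypothesis involves only the $\Hom$-vanishings internal to each of the complexes $\fun{F}_1(R_m)$ and $\fun{F}_2(R_m)$, and those do follow from condition $(1)$ of $\Ca$ for $\fun{F}_1$ together with $f$---to conclude that these two isomorphic complexes have isomorphic right convolutions, hence $\fun{F}_1(A)\iso\fun{F}_2(A)$ for every $A\in\cat{F}$. Once this object-level isomorphism is available, condition $(1)$ of $\Ca$ for $\fun{F}_2$ and the mixed vanishing of hypothesis (i) in Proposition \ref{prop:extab} both reduce to condition $(1)$ for $\fun{F}_1$, after which the proposition can be invoked as a black box and the iso of natural transformations extracted formally from uniqueness. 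Your proposal needs this Lemma \ref{lem:conv1} step inserted \emph{before} any appeal to Lemma \ref{lem:conv2}; without it, the verification of hypothesis (i) does not close.
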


\begin{proof}
Since $f$ is an isomorphism, we can use Lemma \ref{lem:conv1} in the
above argument to show that there is an isomorphism
$\fun{F}_1(A)\iso\fun{F}_2(A)$, for all
$A\in\Db(\cat{E})\cap\cat{A}$. Thus (ii) in Proposition
\ref{prop:extab} follows from $(1)$ in $\Ca$. Therefore, by the above
proposition, there is a unique natural transformation compatible with
shifts $f_0\colon\fun{F}_1\to\fun{F}_2$ extending $f$. Uniqueness,
applied also to $f^{-1}$, $f\comp f^{-1}$ and $f^{-1}\comp f$,
immediately implies that $f_0$ is an isomorphism
\end{proof}

\subsection{The criterion: extension to the whole derived category}\label{subsec:crit2}

In order to extend the natural transformation $f$ of Proposition
\ref{prop:extab} to $\Db(\cat{E})$, the exact functors have to satisfy
one more assumption.

\begin{prop}\label{prop:extending1}
Let $\cat{T}$ be a triangulated category and let $\cat{E}$ be a full
exact subcategory of an abelian category $\cat{A}$
satisfying {\rm (E1)}, {\rm (E2)} and {\rm (E3)}.  Let
$\fun{F}_1,\fun{F}_2\colon\Db(\cat{E})\to\cat{T}$ be exact functors
with a natural transformation
$f\colon\fun{F}_1\rest{\cat{C}}\to\fun{F}_2\rest{\cat{C}}$. Assume
moreover the following:
\begin{itemize}
\item[(i)] $\fun{F}_1$ and $\fun{F}_2$ both satisfy condition $(1)$
of $\Ca$;
\item[(ii)] For any $A,B\in\Db(\cat{E})\cap\cat{A}$ and any integer $k<0$
\[
\Hom(\fun{F}_1(A),\fun{F}_2(B)[k])=0.
\]
\item[(iii)] for all $C\in\Db(\cat{E})$ with trivial cohomologies in
positive degrees, there is $i\in I$ such that
\[
\Hom(\fun{F}_1(C),\fun{F}_2(P_i))=0
\]
and $i$ satisfies condition \eqref{ample2} of
Definition \ref{def:almostample} for $H^0(C)$.
\end{itemize}
Then there exists a unique natural transformation of exact functors
$g\colon\fun{F}_1\to\fun{F}_2$ extending $f$.
\end{prop}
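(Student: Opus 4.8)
The plan is to extend the natural transformation $f_0\colon\fun{F}_1\rest{\cat{D}_0}\to\fun{F}_2\rest{\cat{D}_0}$ produced by Proposition \ref{prop:extab} (using hypothesis (i)) to all of $\Db(\cat{E})$ by induction on the ``length'' of an object, i.e.\ on the number of nonzero cohomology sheaves, using that every object of $\Db(\cat{E})$ sits in a distinguished triangle built from objects of $\cat{D}_0$. Concretely, given $C\in\Db(\cat{E})$, after shifting we may assume its cohomologies live in non-positive degrees; write $C^0:=H^0(C)\in\cat{F}$ and complete the truncation morphism to a distinguished triangle $C'\to C\to \sh[0]{C^0}\mor{w}\sh[1]{C'}$ with $C'\in\Db(\cat{E})$ having strictly fewer nonzero cohomologies, all in negative degrees. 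By induction $g$ is already defined (and natural) on $C'$ and on $\sh[0]{C^0}\in\cat{D}_0$. Applying axiom (TR3) to the morphism of triangles obtained by applying $\fun{F}_1$ and $\fun{F}_2$ and the known components $g(C')$, $f_0(C^0)$, we obtain a (not yet canonical) filler $g(C)\colon\fun{F}_1(C)\to\fun{F}_2(C)$.

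The crucial point — and the step I expect to be the main obstacle — is proving that this filler is \emph{unique}, hence canonical, hence independent of choices and compatible with morphisms, making $g$ a well-defined natural transformation. Two fillers differ by a morphism $\fun{F}_1(C)\to\fun{F}_2(C)$ that precomposes to zero with $\fun{F}_1(C')\to\fun{F}_1(C)$, hence factors through $\fun{F}_1(\sh[0]{C^0})\mor{\fun{F}_1(\sh[-1]{w})}\fun{F}_1(C)$; so it suffices to show $\Hom_{\cat{T}}(\fun{F}_1(\sh[0]{C^0}),\fun{F}_2(C))=0$, or, better, to run a dual truncation argument showing the relevant $\Hom$ group in negative ``degree'' vanishes. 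This is exactly where hypothesis (ii) enters: it controls $\Hom$ groups of the form $\Hom(\fun{F}_1(C),\fun{F}_2(P_i))$ for complexes $C$ with non-positive cohomologies, which via resolutions of objects of $\cat{F}$ by the $P_i$'s (as in the proof of Proposition \ref{prop:extab}) and a convolution argument (Lemmas \ref{lem:conv1} and \ref{lem:conv2}) propagates to the needed vanishing $\Hom(\fun{F}_1(C^0),\fun{F}_2(C'[r]))=0$ for $r\le 0$. Here conditions (E1)--(E3) guarantee the resolutions exist and are ``eventually orthogonal'' so the convolutions are well-defined and unique.

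Once canonicity of $g(C)$ is established, naturality is formal: given $u\colon C\to D$ in $\Db(\cat{E})$, after a common shift reducing both to non-positive cohomologies and using the factorization of morphisms through successive cohomology truncations (Step 4 in the proof of \cite[Prop.\ B.1]{LO}, already invoked in Proposition \ref{prop:extab}), one reduces to comparing the two composites $\fun{F}_2(u)\comp g(C)$ and $g(D)\comp\fun{F}_1(u)$; both are fillers of the same square relative to the triangle decompositions, so the uniqueness just proved forces them to agree. Compatibility with shifts is built in from the start since $f_0$ is, and the extension is visibly unique with the property $g\rest{\cat{C}}=f$, because any such $g$ must restrict to $f_0$ on $\cat{D}_0$ by the uniqueness in Proposition \ref{prop:extab} and then, step by step, must coincide with the canonical fillers. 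The specialization to isomorphisms (needed for the applications) follows as in Corollary \ref{cor:critiso1}: if $f$ is an isomorphism, apply the result also to $f^{-1}$ and use uniqueness of the extensions of $f\comp f^{-1}$ and $f^{-1}\comp f$ to conclude $g$ is an isomorphism.
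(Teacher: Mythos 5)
Your decomposition via the truncation triangle $C'\to C\to H^0(C)[0]\mor{w}C'[1]$ leads to a uniqueness requirement that your hypotheses do not — and cannot — supply. Two fillers $g_1,g_2$ for that triangle differ by a morphism factoring as $\fun{F}_1(C)\to\fun{F}_1(H^0(C))\to\fun{F}_2(C)$ (or, using the other half of the square, through $\Hom(\fun{F}_1(C),\fun{F}_2(C'))$), so you would need $\Hom_{\cat{T}}(\fun{F}_1(H^0(C)),\fun{F}_2(C))=0$ (equivalently $\Hom(\fun{F}_1(C),\fun{F}_2(C'))=0$). This is false in general: condition (i) only kills $\Hom(\fun{F}_1(A),\fun{F}_2(B)[k])$ for $k<0$, while $C$ and $C'$ are built from objects of $\cat{F}$ in \emph{non-positive} degrees, so the relevant $\Ext$'s are in degrees $\ge 0$, precisely the ones (i) does not touch. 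Already for $\fun{F}_1=\fun{F}_2=\id$ on $\Db(X)$ with $X$ an elliptic curve, $H^0(C)=\ko_X$, $C'=\ko_X[1]$ gives $\Hom(\ko_X,\ko_X[1])=H^1(X,\ko_X)\neq 0$. The ``propagation via resolutions and convolutions'' you invoke cannot repair this: resolving $C'$ by $P_i$'s introduces \emph{positive} shifts $\fun{F}_2(P_j)[k]$ with $k>0$, and (ii) says nothing about those; condition (ii) gives vanishing only into the single specially chosen $P_i$, not into an arbitrary $C'$ of negative amplitude.

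The paper avoids this exactly by choosing a \emph{different} distinguished triangle to induct with: $C[-1]\to P_i^{\oplus k}\mor{t}A\mor{t_1}C$, obtained from an epimorphism $s\colon P_i^{\oplus k}\epi\ker d^0$ selected (via hypothesis (ii)) so that $\Hom(\fun{F}_1(A),\fun{F}_2(P_i))=0$. With this triangle, two fillers differ by a morphism factoring through $\fun{F}_2(t)\colon\fun{F}_2(P_i^{\oplus k})\to\fun{F}_2(A)$, and the uniqueness requirement becomes literally the vanishing supplied by (ii). In other words, (ii) is tailored to the cell-attachment decomposition $P_i^{\oplus k}\to A\to C$, not to the $t$-structure truncation $C'\to C\to H^0(C)$; swapping in the truncation triangle, as you did, decouples (ii) from the filler-uniqueness step and the argument does not close. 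You would also then need a separate ``independence of the choice of $s$'' step (the paper handles this by a common refinement of two epimorphisms), but this is downstream of the main issue.
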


\begin{proof}
For $n\in\NN$, denote by $\cat{D}_n$ the (strictly) full subcategory
of $\Db(\cat{E})$ with objects the complexes $A$ with the following
property: there exists $a\in\ZZ$ such that $H^p(A)=0$ for $p<a$ or
$p>a+n$. We are going to prove by induction on $n$ that $f$ extends
uniquely to a natural transformation compatible with shifts
$f_n\colon\fun{F}_1\rest{\cat{D}_n}\to\fun{F}_2\rest{\cat{D}_n}$. Once
we do this, it is obvious that for every object $A$ of $\Db(\cat{E})$
we can define $g(A):=f_n(A)$ if $A\in\cat{D}_n$, and that $g$ is then
the unique required extension of $f$.

The case $n=0$ having already been proved in Proposition
\ref{prop:extab}, we come to the inductive step from $n-1$ to
$n>0$. For every object $A\in\cat{D}_n$ we need to define
$f_n(A)\colon\fun{F}_1(A)\to\fun{F}_2(A)$. To this purpose, we can
assume that $H^p(A)=0$ for $p<-n$ or $p>0$. If $A=\{\cdots\to
A^0\mor{d^0}A^1\to\cdots\}$, let
\[
s\colon P_i^{\oplus k}\epi\ker d^0
\]
(for some $i\in I$ and $k\in\NN$) be an epimorphism such that
$\Hom(\fun{F}_1(A),\fun{F}_2(P_i))=0$. Notice that $s$ can be found as
follows: after choosing an epimorphism $P_j^{\oplus l}\epi\ker d^0$
(with $j\in I$ and $l\in\NN$), take $i\in I$ which satisfies
condition (iii) for $A\oplus P_j^{\oplus l}$ (so that, in particular,
there is an epimorphism $P_i^{\oplus k}\epi P_j^{\oplus l}$), and
define $s$ to be the composition $P_i^{\oplus k}\epi P_j^{\oplus
  l}\epi\ker d^0$. Denoting by $t\colon P_i^{\oplus k}\to A$ the
composition of $s$ with the natural morphism $\ker d^0\to A$, it is
then clear that $H^0(t)\colon P_i^{\oplus k}\to H^0(A)$ is an
epimorphism. It follows that we have a distinguished triangle
\begin{equation}\label{eqn:dist}
C[-1]\to P_i^{\oplus k}\mor{t}A\mor{t_1}C
\end{equation}
with $C\in\cat{D}_{n-1}$. Hence, by the inductive hypothesis and using
axiom (TR3), we obtain a commutative diagram whose rows are
distinguished triangles
\[
\xymatrix{\fun{F}_1(C)[-1] \ar[rr] \ar[d]^{f_{n-1}(C)[-1]} & &
\fun{F}_1(P_{i}^{\oplus k}) \ar[rr]^-{\fun{F}_1(t)}
\ar[d]^{f_{n-1}(P_i^{\oplus k})} & & \fun{F}_1(A) \ar[d]^{f_A}
\ar[rr]^-{\fun{F}_1(t_1)} & & \fun{F}_1(C) \ar[d]^{f_{n-1}(C)} \\
\fun{F}_2(C)[-1] \ar[rr] & & \fun{F}_2(P_{i}^{\oplus k})
\ar[rr]^-{\fun{F}_2(t)} & & \fun{F}_2(A) \ar[rr]^-{\fun{F}_2(t_1)}
& & \fun{F}_2(C)}
\]
for some $f_A\colon\fun{F}_1(A)\to\fun{F}_2(A)$. Observe that, since
$\Hom(\fun{F}_1(A),\fun{F}_2(P_i^{\oplus k}))=0$ by assumption, $f_A$
is the unique morphism such that the square on the right commutes.

In order to prove that $f_A$ does not depend on the choice of
$s$, assume that $s'\colon P_{i'}^{\oplus k'}\epi\ker d^0$ is
another epimorphism such that
$\Hom(\fun{F}_1(A),\fun{F}_2(P_{i'}))=0$, and thus inducing another
morphism $f'_A\colon\fun{F}_1(A)\to\fun{F}_2(A)$. We claim that we can
find a third epimorphism $s''\colon P_{i''}^{\oplus k''}\epi\ker d^0$
such that $\Hom(\fun{F}_1(A),\fun{F}_2(P_{i''}))=0$ (inducing
$f''_A\colon\fun{F}_1(A)\to\fun{F}_2(A)$) and fitting into a
commutative diagram
\[
\xymatrix{P_{i''}^{\oplus k''} \ar[d]_{w'} \ar[rr]^-{w}
\ar[drr]^-{s''} & & P_i^{\oplus k} \ar[d]^{s} \\
P_{i'}^{\oplus k'} \ar[rr]_-{s'} & &
\ker d^0.}
\]
This can be easily seen if one takes $i''\in I$ satisfying condition
(iii) for $A\oplus P_j^{\oplus l}$, where $j\in I$ and $l\in\NN$ are
such that there exists an epimorphism $P_j^{\oplus l}\epi P_i^{\oplus
k}\times_{\ker d^0}P_{i'}^{\oplus k'}$ (then $s''$ is an epimorphism
because the natural map $ P_i^{\oplus k}\times_{\ker
d^0}P_{i'}^{\oplus k'}\epi\ker d^0$ is an epimorphism). Observing
that the morphisms $t'\colon P_{i'}^{\oplus k'}\to A$ and $t''\colon
P_{i''}^{\oplus k''}\to A$ (induced, respectively, by $s'$ and $s''$)
obviously satisfy $t\comp w=t''=t'\comp w'$, by axiom (TR3) there is a
commutative diagram whose rows are distinguished triangles
\[
\xymatrix{P_{i''}^{\oplus k''} \ar[rr]^-{t''} \ar[d]^w & &
A \ar[rr]^-{t''_1} \ar[d]^{\id} & & C'' \ar[rr] \ar[d]^v & &
P_{i''}^{\oplus k''}[1] \ar[d]^{w[1]} \\
P_i^{\oplus k} \ar[rr]^-t & & A \ar[rr]^-{t_1} & & C \ar[rr] & &
P_i^{\oplus k}[1]}
\]
for some $v\colon C''\to C$. As the diagram
\[
\xymatrix{\fun{F}_1(A) \ar[d]^{f''_A} \ar[rr]^-{\fun{F}_1(t''_1)} & &
\fun{F}_1(C'') \ar[d]^{f_{n-1}(C'')} \ar[rr]^-{\fun{F}_1(v)} & &
\fun{F}_1(C) \ar[d]^{f_{n-1}(C)} \\
\fun{F}_2(A) \ar[rr]^-{\fun{F}_2(t''_1)} & & \fun{F}_2(C'')
\ar[rr]^-{\fun{F}_2(v)} & & \fun{F}_2(C)}
\]
commutes (the square on the left by definition of $f''_A$, the square
on the right because $f_{n-1}$ is a natural transformation by
induction) and since $v\comp t''_1=t_1$, we obtain
\[
f_{n-1}(C)\comp\fun{F}_1(t_1)=
f_{n-1}(C)\comp\fun{F}_1(v)\comp\fun{F}_1(t''_1)=
\fun{F}_2(v)\comp\fun{F}_2(t''_1)\comp f''_A=
\fun{F}_2(t_1)\comp f''_A.
\]
On the other hand, $f_A$ is the only morphism with the property that
$f_{n-1}(C)\comp\fun{F}_1(t_1)=\fun{F}_2(t_1)\comp f_A$. It follows
that $f_A=f''_A$ and similarly $f'_A=f''_A$, thereby proving that
$f_A=f'_A$. Therefore we can set $f_n(A):=f_A$, and more generally
$f_n(A[k]):=f_A[k]$ for every integer $k$, thus defining $f_n$ on
every object of $\cat{D}_n$.

To conclude the inductive step it is enough to show that $f_n$ is a
natural transformation, because then it is clear by definition that
$f_n$ is compatible with shifts, that
$f_n\rest{\cat{C}}=f_{n-1}\rest{\cat{C}}=f$ (actually also
$f_n\rest{\cat{D}_{n-1}}=f_{n-1}$) and that $f_n$ is unique with these
properties. So we have to prove that
\begin{equation}\label{eqn:nat}
f_n(B)\comp\fun{F}_1(u)=\fun{F}_2(u)\comp f_n(A)
\end{equation}
for every morphism $u\colon A\to B$ of $\cat{D}_n$. If \eqref{eqn:nat}
holds for a morphism $u$ of $\cat{D}_n$, we say that $f_n$ is
\emph{compatible} with $u$.

Recall that in
$\D[b](\cat{E})$ we can write $u=w_1\comp w_2^{-1}$, where $w_1$ and $w_2$ are
(represented by) morphisms of complexes and $w_2$ is a
quasi--isomorphism (hence $w_1$ and $w_2$ are again in $\cat{D}_n$). Thus
$f_n$ is compatible with $u$ if it is
compatible both with $w_2^{-1}$ (or, equivalently, with $w_2$) and with
$w_1$. In other words, it is harmless to assume directly that $u$ is a
morphism of complexes, denoted by $A=\{\cdots\to
A^0\mor{d^0}A^1\to\cdots\}$ and $B=\{\cdots\to
B^0\mor{e^0}B^1\to\cdots\}$. We can also assume that, as before,
$H^p(A)=0$ for $p<-n$ or $p>0$. Moreover, we denote by $c$ the
greatest integer such that $H^c(B)\ne0$ (of course, if $B\iso0$ there
is nothing to prove). Now our aim is to show that the problem of
verifying \eqref{eqn:nat} can be reduced to a similar problem with
another ``simpler'' morphism in place of $u$. To this purpose we
distinguish two cases according to the value of $c$.

\smallskip

\noindent{\bf Case 1: $c<0$.} Choose $j\in I$ which satisfies
\eqref{ample2} of Definition \ref{def:almostample} for $K:=\ker
d^0\times_{\ker e^0}B^{-1}$, let $P_j^{\oplus l}\epi K$ be an
epimorphism, and take $i\in I$ satisfying condition (iii)
for $A\oplus P_j^{\oplus l}$. Then, reasoning as before, we get an
epimorphism $s\colon P_i^{\oplus k}\epi\ker d^0$ (notice that
$K\to\ker d^0$ is an epimorphism because $B^{-1}\to\ker e^0=\im
e^{-1}$ is an epimorphism, as $c<0$) which can be used to
define $f_A$. Moreover, denoting by $t\colon P_i^{\oplus k}\to A$ the
morphism (of complexes) induced by $s$, it is clear that $u\comp t$
is given by a morphism $w\colon P_i^{\oplus k}\to\ker
e^0\subseteq B^0$ which factors through the natural morphism $K\to\ker
e^0$. In particular, there exists $w'\colon P_i^{\oplus k}\to B^{-1}$
such that $w=e^{-1}\comp w'$. This proves that
$u\comp t$ is homotopic to $0$, whence it is $0$ in
$\D[b](\cat{E})$. From this and from the distinguished triangle
\eqref{eqn:dist} it follows that $u=v\comp t_1$ for some $v\colon C\to
B$ (with $C\in\cat{D}_{n-1}$). As $f_n$ is compatible with $t_1$ by
definition of $f_A=f_n(A)$, in order to check \eqref{eqn:nat} it is
therefore enough to show that $f_n$ is compatible with $v$. Notice
that, if $A\in\cat{D}_m$ for some $0<m\le n$, then
$C\in\cat{D}_{m-1}$. On the other hand, if $A\in\cat{D}_0$ (hence $A$
is isomorphic to an object of $\cat{F}$), then $C\in\cat{D}_0$ and
$C[-1]$ is isomorphic to an object of $\cat{F}$. So in this last case,
passing from $u$ to $v[-1]$, $c$ increases by $1$.

\smallskip

\noindent{\bf Case 2: $c\ge0$.} Choose an epimorphism $P_j^{\oplus l}\epi\ker e^c$ (with
$j\in I$ and $l\in\NN$) and take $i\in I$ satisfying
condition (iii) for $A[c]\oplus B[c]\oplus P_j^{\oplus l}$. Then, as
usual, we can find an epimorphism $s'\colon P_i^{\oplus k}\epi\ker
e^c$ which can be used to define $f_{B[c]}$. Denoting by $t'\colon
P_i^{\oplus k}[-c]\to B$ the morphism induced by $s'$, and extending
it to a distinguished triangle
\[
C'[-1]\to P_i^{\oplus k}[-c]\mor{t'}B\mor{t'_1}C'
\]
(with $C'\in\cat{D}_{n-1}$), we claim that \eqref{eqn:nat} follows
once one proves that $f_n$ is compatible with $v':=t'_1\comp u\colon
A\to C'$. To see this, observe that in the diagram
\[
\xymatrix{\fun{F}_1(A) \ar[d]^{f_n(A)} \ar[rr]^-{\fun{F}_1(u)} & &
\fun{F}_1(B) \ar[d]^{f_n(B)} \ar[rr]^-{\fun{F}_1(t'_1)} & &
\fun{F}_1(C') \ar[d]^{f_n(C')} \\
\fun{F}_2(A) \ar[rr]^-{\fun{F}_2(u)} & & \fun{F}_2(B)
\ar[rr]^-{\fun{F}_2(t'_1)} & & \fun{F}_2(C')}
\]
the square on the right commutes by definition of
$f_{B[c]}[-c]=f_n(B)$, whence (assuming compatibility of $f_n$ with
$v'$)
\[
\fun{F}_2(t'_1)\comp
(f_n(B)\comp\fun{F}_1(u)-\fun{F}_2(u)\comp f_n(A))=
f_n(C')\comp\fun{F}_1(v')-\fun{F}_2(v')\comp f_n(A)=0.
\]
It follows that $f_n(B)\comp\fun{F}_1(u)-\fun{F}_2(u)\comp f_n(A)$
factors through $\fun{F}_2(t')$, and then it must be $0$ (which means
that \eqref{eqn:nat} holds) because
$\Hom(\fun{F}_1(A),\fun{F}_2(P_i^{\oplus k}[-c]))=0$ by the choice of
$i$. Observe that, similarly as above, if $B\in\cat{D}_m$ for some
$0<m\le n$, then $C'\in\cat{D}_{m-1}$, whereas, if $B\in\cat{D}_0$,
then $C'\in\cat{D}_0$ and, passing from $u$ to $v'$, $c$ decreases by
$1$.

\smallskip

To finish the proof, note that, applying the above procedure, one
obtains a morphism having the source in $\cat{D}_{n-1}$ and the same
target (in Case 1) or the same source and the target in
$\cat{D}_{n-1}$ (in Case 2). So it is enough to show that, repeating
the procedure a sufficient number of times, one necessarily encounters
both cases (because then one reduces to check compatibility of $f_n$
with a morphism of $\cat{D}_{n-1}$, where it holds by
induction). Indeed, if one always encounters Case 1 (the argument is
completely similar for Case 2), then in a finite number (at most $n$)
of steps the source becomes an object of $\cat{D}_0$. Applying another
finite number of steps, one eventually gets $c=0$, namely Case 2.
\end{proof}

In the paper we will need the following special case of the above result.

\begin{cor}\label{cor:critiso2}
With the same hypotheses on $\cat{E}$, $\cat{A}$ and $\cat{T}$,
let $\fun{F}_1,\fun{F}_2\colon\Db(\cat{E})\to\cat{T}$ be exact
functors and let
$f\colon\fun{F}_1\rest{\cat{C}}\isomor\fun{F}_2\rest{\cat{C}}$ be an
isomorphism. Assume moreover that $\fun{F}_1$ satisfies $\Ca$.  Then
there exists a unique isomorphism of exact functors
$g\colon\fun{F}_1\isomor\fun{F}_2$ extending $f$.
\end{cor}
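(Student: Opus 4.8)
\textbf{Proof plan for Corollary \ref{cor:critiso2}.}

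The plan is to deduce this directly from Proposition \ref{prop:extending1} by verifying that its hypotheses (i) and (ii) hold under the current assumptions, and then to upgrade the resulting natural transformation $g$ to an isomorphism by a standard uniqueness argument. First I would observe that since $f\colon\fun{F}_1\rest{\cat{C}}\isomor\fun{F}_2\rest{\cat{C}}$ is an isomorphism, the functors $\fun{F}_1$ and $\fun{F}_2$ agree (via $f$) on the weakly ample set $\{P_i\}_{i\in I}$; in particular, running the convolution construction in the proof of Proposition \ref{prop:extab} with Lemma \ref{lem:conv1} in place of Lemma \ref{lem:conv2} (exactly as in the proof of Corollary \ref{cor:critiso1}) produces isomorphisms $\fun{F}_1(A)\iso\fun{F}_2(A)$ for every $A\in\Db(\cat{E})\cap\cat{A}$. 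Consequently the vanishing $\Hom(\fun{F}_1(A),\fun{F}_2(B)[k])=0$ for $A,B\in\Db(\cat{E})\cap\cat{A}$ and $k<0$ follows from condition $(1)$ of $\Ca$ for $\fun{F}_1$, so hypothesis (i) of Proposition \ref{prop:extending1} holds (with $\fun{F}_2$ satisfying condition $(1)$ of $\Ca$ because it is isomorphic to $\fun{F}_1$ on enough objects, or more directly because the assumption is that $\fun{F}_1$ satisfies all of $\Ca$ and this vanishing is invariant under replacing $\fun{F}_2(B)$ by the isomorphic $\fun{F}_1(B)$).

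Next I would verify hypothesis (ii) of Proposition \ref{prop:extending1}: for all $C\in\Db(\cat{E})$ with cohomologies in non-positive degrees, there is $i\in I$ with $\Hom(\fun{F}_1(C),\fun{F}_2(P_i))=0$ and such that $i$ satisfies \eqref{ample1} and \eqref{ample2} of Definition \ref{def:almostample} for $H^0(C)$. This is immediate: part $(2)$ of $\Ca$ for $\fun{F}_1$ gives such an $i$ with $\Hom(\fun{F}_1(C),\fun{F}_1(P_i))=0$, and applying the isomorphism $f(P_i)\colon\fun{F}_1(P_i)\isomor\fun{F}_2(P_i)$ turns this into $\Hom(\fun{F}_1(C),\fun{F}_2(P_i))=0$. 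With (i) and (ii) in hand, Proposition \ref{prop:extending1} yields a unique natural transformation of exact functors $g\colon\fun{F}_1\to\fun{F}_2$ extending $f$.

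Finally I would argue that $g$ is an isomorphism. Since $f$ is an isomorphism, its inverse $f^{-1}\colon\fun{F}_2\rest{\cat{C}}\isomor\fun{F}_1\rest{\cat{C}}$ is also a natural transformation; but to invoke Proposition \ref{prop:extending1} with the roles of $\fun{F}_1$ and $\fun{F}_2$ swapped one would need $\fun{F}_2$ to satisfy $\Ca$, which is not assumed. I would instead argue as in Corollary \ref{cor:critiso1}: by construction (the convolution argument with Lemma \ref{lem:conv1}) the components $g(A)$ are isomorphisms for $A\in\cat{D}_0$, and then by induction on $n$, using the five lemma on the commutative diagram of distinguished triangles \eqref{eqn:dist} appearing in the proof of Proposition \ref{prop:extending1} (whose outer vertical maps $g(P_i^{\oplus k})$ and $g(C)$, $g(C)[-1]$ are isomorphisms by the inductive hypothesis), one concludes that $g(A)$ is an isomorphism for every $A\in\cat{D}_n$, hence for every object of $\Db(\cat{E})$. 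The uniqueness of $g$ extending $f$ is part of the statement of Proposition \ref{prop:extending1}. The only mildly delicate point is the bookkeeping in the last paragraph — making sure that the inductive construction of $g$ really does send each $g(A)$ through such a triangle with the outer terms already known to be isomorphisms — but this is exactly the structure of the proof of Proposition \ref{prop:extending1}, so no genuine obstacle arises.
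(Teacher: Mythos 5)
Your proof matches the paper's for the verification of hypotheses (i) and (ii) of Proposition \ref{prop:extending1}: as in the paper, you first invoke Corollary \ref{cor:critiso1} to obtain $\fun{F}_1(A)\iso\fun{F}_2(A)$ for all $A\in\Db(\cat{E})\cap\cat{A}$, from which (i) follows from part $(1)$ of $\Ca$, and for (ii) you combine part $(2)$ of $\Ca$ with the isomorphism $f(P_i)\colon\fun{F}_1(P_i)\isomor\fun{F}_2(P_i)$. The paper then dispatches the final step — that $g$ is an isomorphism — with the one-line statement ``a formal consequence of uniqueness, as in the proof of Corollary \ref{cor:critiso1}''. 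You correctly observe that transcribing that trick literally (extending $f^{-1}$ as well, then using uniqueness to identify $g'\comp g$ and $g\comp g'$ with the identity) would require Proposition \ref{prop:extending1} with the roles of $\fun{F}_1$ and $\fun{F}_2$ swapped, and the swapped hypothesis (ii) — $\Hom(\fun{F}_2(C),\fun{F}_1(P_i))=0$ for general $C$ with cohomologies in non-positive degrees — is not available, since at this stage one does not yet know $\fun{F}_1(C)\iso\fun{F}_2(C)$ outside of $\cat{D}_0$. Your replacement — an induction on $n$, applying the five lemma to the commutative diagram of distinguished triangles built on \eqref{eqn:dist} in the proof of Proposition \ref{prop:extending1}, with base case $\cat{D}_0$ handled by Corollary \ref{cor:critiso1} — is a complete and clean way to close this, and it is in fact what the inductive construction of $f_n$ gives you for free: the outer vertical maps in that diagram are isomorphisms by the inductive hypothesis, so $f_n(A)$ is one too. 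So your proof is correct and takes the same route as the paper; if anything, it makes precise a point that the paper glosses over.
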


\begin{proof}
As $f$ is an isomorphism, we can apply Corollary \ref{cor:critiso1} so
that $\fun{F}_1(A)\iso\fun{F}_2(A)$, for all
$A\in\Db(\cat{E})\cap\cat{A}$. Hence hypotheses (i) and (ii) in Proposition
\ref{prop:extending1} follow from $\Ca$. Analogously, for (iii) we use
that $\fun{F}_1(P_i)\iso\fun{F}_2(P_i)$ by assumption. Thus
Proposition \ref{prop:extending1} applies and we get a unique natural
transformation of exact functors $g\colon\fun{F}_1\to\fun{F}_2$
extending $f$. The fact that $g$ is an isomorphism is again a formal
consequence of uniqueness, as in the proof of Corollary
\ref{cor:critiso1}.
\end{proof}

In the case $\cat{E}=\cat{A}$, we are going to give a sufficient
condition under which $\Ca$ is automatically satisfied. We leave it to
the reader to formulate a similar statement which ensures that the
hypotheses of Proposition \ref{prop:extending1} are satisfied.

\begin{lem}\label{lem:adjoint}
Let $\fun{F}\colon\Db(\cat{A})\to\cat{T}$ be an exact functor
admitting a left adjoint and satisfying $\Cb$. Assume moreover that
$\{P_i\}_{i\in I}$ is an almost ample set in $\cat{A}$. Then $\fun{F}$
satisfies $\Ca$ as well.
\end{lem}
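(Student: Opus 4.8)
The statement of Lemma~\ref{lem:adjoint} asks us to deduce $\Ca$ (in the case $\cat{E}=\cat{A}$) from $\Cb$ together with the existence of a left adjoint and the hypothesis that $\{P_i\}_{i\in I}$ is a weakly ample set. Since $\cat{E}=\cat{A}$, we have $\Db(\cat{E})\cap\cat{A}=\cat{A}$, so part~(1) of $\Ca$ is literally the statement of $\Cb$ and there is nothing to prove there. The entire content is part~(2): given $C\in\Db(\cat{A})$ with cohomologies in non-positive degrees, we must produce $i\in I$ such that $\Hom(\fun{F}(C),\fun{F}(P_i))=0$ and such that $i$ realizes properties \eqref{ample1} and \eqref{ample2} of Definition~\ref{def:almostample} for $H^0(C)$.

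\textbf{First step: reduce the vanishing to a $\Hom$ computed in $\Db(\cat{A})$.} Let $\fun{G}$ denote the left adjoint of $\fun{F}$. Then for every $i$ we have a natural isomorphism
\[
\Hom_{\cat{T}}(\fun{F}(C),\fun{F}(P_i))\iso\Hom_{\Db(\cat{A})}(\fun{G}\fun{F}(C),P_i).
\]
So it suffices to find $i$ with $\Hom_{\Db(\cat{A})}(\fun{G}\fun{F}(C),P_i)=0$ and satisfying \eqref{ample1}--\eqref{ample2} for $H^0(C)$. Set $D:=\fun{G}\fun{F}(C)\in\Db(\cat{A})$, a bounded complex; write its nonzero cohomologies in the range $[a,b]$.

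\textbf{Second step: use weak ampleness and finite homological dimension to kill $\Hom(D,P_i)$.} By the defining property of a weakly ample set, for each single object $A\in\cat{A}$ there is an index satisfying \eqref{ample1}, \eqref{ample2} and \eqref{ample3} for $A$, and inspecting the proof of Proposition~\ref{prop:amp} one sees that in fact one can choose such an index so that \eqref{ample3} holds simultaneously for a prescribed finite family of objects (apply the weak ampleness to a direct sum). I would apply this to the finite family $H^a(D),\dots,H^b(D),H^0(C)$ together with the further requirement coming from \eqref{ample1}--\eqref{ample2} for $H^0(C)$: choose $i\in I$ such that $\Hom_{\cat{A}}(H^p(D),P_i)=0$ for all $p\in[a,b]$, and such that $i$ satisfies \eqref{ample1} and \eqref{ample2} for $H^0(C)$. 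Here one uses that the subset $\{P_i\}$ is a weakly ample \emph{set} (the index realizing \eqref{ample3} for a sum works for each summand, by functoriality of the natural maps), exactly as in the proof of Proposition~\ref{prop:amp}; note Lemma~\ref{lem:adjoint} is stated for $\cat{E}=\cat{A}$, and the finite homological dimension hypothesis of Proposition~\ref{prop:extab}/Proposition~\ref{prop:extending1} guarantees $\Ext$-vanishing above a fixed degree, which is what converts vanishing of all $\Hom_{\cat{A}}(H^p(D),P_i)$ into vanishing of the hyper-$\Ext$ group $\Hom_{\Db(\cat{A})}(D,P_i)$. Concretely, filter $D$ by its truncations $\tau_{\le p}D$: the cone of $\tau_{\le p-1}D\to\tau_{\le p}D$ is $H^p(D)[-p]$, and
\[
\Hom_{\Db(\cat{A})}(H^p(D)[-p],P_i)=\Ext_{\cat{A}}^{\,p}(H^p(D),P_i),
\]
which is $0$ for $p<0$ trivially, is $\Hom_{\cat{A}}(H^0(D),P_i)=0$ for $p=0$ by our choice, and for $p>0$ I would instead invoke that a weakly ample set forces $\Ext^{m}_{\cat{A}}(A,P_i)=0$ for all $m\ne 0$ once \eqref{ample3} is available and $\cat{A}$ has finite homological dimension — this is the standard consequence of condition \eqref{ample3} combined with the finite-dimensionality and the ample-sequence-style argument (the same argument that in \cite{Or} turns $\Hom(A,P_i)=0$ into $\Ext^m(A,P_i)=0$ for $m>0$). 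An induction up the truncation tower then yields $\Hom_{\Db(\cat{A})}(D,P_i)=0$, hence $\Hom_{\cat{T}}(\fun{F}(C),\fun{F}(P_i))=0$.

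\textbf{Main obstacle.} The delicate point is the compatibility of the two demands on the single index $i$: on one hand $i$ must kill the finitely many groups $\Ext^p_{\cat{A}}(H^p(D),P_i)$ coming from $D=\fun{G}\fun{F}(C)$, and on the other hand the \emph{same} $i$ must serve as a ``good'' index (properties \eqref{ample1}, \eqref{ample2}) for $H^0(C)$. This is exactly the sort of simultaneous choice a weakly ample \emph{set} (as opposed to a single ample sequence) is designed to allow: one applies Definition~\ref{def:almostample} to a suitable direct sum, as in Proposition~\ref{prop:amp}, and checks that an index good for the sum is good for each summand. I would also note that this is where the hypothesis ``$\fun{F}$ has a left adjoint'' is genuinely used — without it there is no way to move the test object $\fun{F}(C)$ back into $\Db(\cat{A})$ where weak ampleness can be applied; and ``finite homological dimension'', implicit in the ambient setup, is what makes $D$ have only finitely many relevant $\Ext$-degrees so the finite simultaneous choice above is possible.
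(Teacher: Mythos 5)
The key idea in the paper's proof — which your argument misses — is to first show, using adjunction and $\Cb$, that $D:=\fun{F}^*\fun{F}(C)$ has \emph{cohomologies concentrated in non-positive degrees} whenever $C$ does. Concretely: if some $H^m(D)\ne 0$ for $m>0$, taking the largest such $m$ gives a non-zero map $D\to H^m(D)[-m]$, and then by adjunction $0\ne\Hom(D,H^m(D)[-m])\iso\Hom(\fun{F}(C),\fun{F}(H^m(D))[-m])$, contradicting $\Cb$ (after reducing to $C\in\cat{A}$ by d\'evissage). Once $D$ lives in non-positive degrees, since $P_i$ sits in degree $0$ the truncation triangle gives $\Hom_{\Db(\cat{A})}(D,P_i)\iso\Hom_{\cat{A}}(H^0(D),P_i)$, and one simply chooses $i$ satisfying \eqref{ample1}--\eqref{ample3} for $H^0(D)\oplus H^0(C)$. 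No higher $\Ext$'s ever appear.

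Your proposal instead allows $D$ to have cohomologies in an arbitrary range $[a,b]$ and tries to kill the groups $\Ext^p_{\cat{A}}(H^p(D),P_i)$ for $p>0$. This step fails: condition \eqref{ample3} only gives $\Hom_{\cat{A}}(A,P_i)=0$, and there is no mechanism in Definition~\ref{def:almostample} that upgrades this to $\Ext^m_{\cat{A}}(A,P_i)=0$ for $m>0$. You invoke a ``standard consequence of \eqref{ample3} combined with finite homological dimension,'' but this is not a consequence: the Orlov-style vanishing you are thinking of concerns $\Hom(P_i,A[m])$ (the strong form of \eqref{ample2}), which goes in the opposite direction. Moreover, ``finite homological dimension'' is not in the hypotheses of Lemma~\ref{lem:adjoint} and is not needed in the paper's proof. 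In short, the adjoint together with $\Cb$ is what forces $D$ into non-positive degrees, collapsing the whole problem to degree~$0$; without that observation the argument does not close.
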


\begin{proof}
Observing that part $(1)$ of $\Ca$ coincides with $\Cb$ because
$\cat{E}=\cat{A}$, it remains to prove part $(2)$ of $\Ca$. Denoting
by $\fun{F}^*\colon\cat{T}\to\Db(\cat{A})$ the left adjoint of
$\fun{F}$, we claim that $H^p(\fun{F}^*\comp\fun{F}(A))=0$ for any
$A\in\cat{A}$ and for any $p>0$. Indeed, otherwise there would exist
$A\in\cat{A}$ and $m>0$ with a non-zero morphism
$\fun{F}^*\comp\fun{F}(A)\to H^m(\fun{F}^*\comp\fun{F}(A))[-m]$ (it is
enough to let $m$ be the largest integer such that
$H^m(\fun{F}^*\comp\fun{F}(A))\ne0$). But then
\[
0\neq\Hom(\fun{F}^*\comp\fun{F}(A),H^m(\fun{F}^*\comp\fun{F}(A))[-m])
\iso\Hom(\fun{F}(A),\fun{F}(H^m(\fun{F}^*\comp\fun{F}(A)))[-m])
\]
by adjunction, contradicting $\Cb$.

The above implies more generally that
$H^p(\fun{F}^*\comp\fun{F}(C))=0$, for any $C\in\Db(\cat{A})$ having
trivial cohomologies in positive degrees and any $p>0$. To see this,
we can proceed by induction on the smallest integer $n$ such that $C$
is an object of $\cat{D}_n$ (the full subcategory of
$\Db(\cat{E})=\Db(\cat{A})$ defined at the beginning of the proof of
Proposition \ref{prop:extending1}). Indeed, we can assume without loss
of generality that $H^0(C)$ is the greatest non-trivial cohomology of
$C$. Then $C$ is isomorphic to an object of $\cat{A}$ if $n=0$, so the
statement has already been proved in this case. If $n>0$ we have a
distinguished triangle
\[
C'\to C\to H^0(C)\to C'[1]
\]
with $C'$ having non-trivial cohomologies only in negative degrees and
$C'\in\cat{D}_{n-1}$. By induction, for $p>0$, we have
$H^p(\fun{F}^*\comp\fun{F}(C'))=H^p(\fun{F}^*\comp\fun{F}(H^0(C)))=0$. Thus
$H^p(\fun{F}^*\comp\fun{F}(C))=0$.

Then
for such an object $C$ and for any $i\in I$ we have
\[
\Hom(\fun{F}(C),\fun{F}(P_i))\iso\Hom(\fun{F}^*\comp\fun{F}(C),P_i)
\iso\Hom(H^0(\fun{F}^*\comp\fun{F}(C)),P_i).
\]
For the last isomorphism above, take the distinguished triangle
\[
C''\to\fun{F}^*\comp\fun{F}(C)\to H^0(\fun{F}^*\comp\fun{F}(C))\to C''[1],
\]
where again, $C''$ has cohomologies in degrees smaller than zero. Hence
\[
\Hom(C'',P_i)\iso\Hom(C''[1],P_i)\iso 0.
\]

Therefore part $(2)$ of $\Ca$ is satisfied if one takes $i\in I$ as in
Definition \ref{def:almostample} for
$H^0(\fun{F}^*\comp\fun{F}(C))\oplus H^0(C)$.
\end{proof}

Combining the above result with Corollary \ref{cor:critiso2}
immediately gives a proof of Proposition \ref{prop:extending}.

\subsection{The geometric case and some examples}\label{subsec:smoothex}

In this section we want to clarify which abelian category $\cat{A}$
and exact subcategory $\cat{E}$ have to be taken in order to use the
results in Section \ref{subsec:crit2} to prove Theorems \ref{thm:main2}
and \ref{thm:main1}.

Therefore let $X$ be a quasi-projective
scheme and let $Z$ be a projective subscheme of $X$. Assume further
that $\ko_{iZ}\in\Dp(X)$ for all $i>0$. Set
\[
\cat{A}:=\Coh_Z(X)\qquad\cat{E}:=\Dp[Z](X)\cap\Coh_Z(X).
\]

\begin{prop}\label{prop:excat}
Under the above assumptions, $\cat{E}$ is a full exact subcategory of
$\cat{A}$, {\rm (E1)}, {\rm (E2)} and {\rm (E3)} are satisfied and
$\Dp[Z](X)=\Db(\cat{E})\subseteq\Db(\cat{A})$.
\end{prop}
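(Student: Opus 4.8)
The plan is to verify the four claims in turn, reducing each to the structural facts about supported perfect complexes already recorded in Section \ref{subsec:prelcat}. First, that $\cat{E}$ is a full exact subcategory of $\cat{A}=\Coh_Z(X)$: $\cat{E}$ is defined as $\Dp[Z](X)\cap\Coh_Z(X)$, so an object of $\cat{E}$ is a coherent sheaf supported on $Z$ that, regarded as a complex in degree $0$, is perfect. This class is clearly full in $\cat{A}$; to see it is an exact subcategory in the sense of Quillen, I would take the admissible exact sequences to be the short exact sequences in $\cat{A}$ with all three terms in $\cat{E}$. The extension-closure axiom needed to make $\cat{E}$ exact follows from the two-out-of-three property of perfect complexes: if $0\to\ka\to\kb\to\kc\to0$ is exact in $\Coh_Z(X)$ with two of the three sheaves perfect, the third sits in a distinguished triangle with the other two and is therefore perfect as well.

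Next, condition (E1). An admissible epimorphism in $\cat{E}$ is by the above a morphism $\ka\epi\kb$ in $\cat{E}$ whose kernel (taken in $\cat{A}$) again lies in $\cat{E}$; one must show this coincides with ``epimorphism in $\cat{A}$''. The nontrivial direction is that any epimorphism $\ka\epi\kb$ in $\cat{A}$ between objects of $\cat{E}$ has kernel in $\cat{E}$ — but this is again the two-out-of-three property for perfectness applied to the triangle $\ker\to\ka\to\kb$. For (E2), I would invoke Proposition \ref{prop:amp}: the set $\cat{Amp}(Z,X,H)=\{\ko_{|i|Z}(jH)\}$ lies in $\cat{E}$ under the hypothesis $\ko_{iZ}\in\Dp(X)$ (these are perfect by assumption and coherent, supported on $Z$), and Proposition \ref{prop:amp} says it satisfies properties \eqref{ample1} and \eqref{ample2} of Definition \ref{def:almostample}. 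That gives (E2) with $\{P_i\}_{i\in I}=\cat{Amp}(Z,X,H)$ directly.

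The substantive point is the last assertion, $\Dp[Z](X)=\Db(\cat{E})$ as subcategories of $\Db(\cat{A})=\Db(\Coh_Z(X))\iso\Db_Z(X)$ (using Proposition \ref{prop:Ball2}), because (E3) will be an easy consequence of it. Here is where I expect the main obstacle. One inclusion, $\Db(\cat{E})\subseteq\Dp[Z](X)$, is formal: $\Dp[Z](X)$ is a thick triangulated subcategory of $\Db_Z(X)$ containing $\cat{E}$, hence contains everything built from $\cat{E}$ by triangles and summands, which is exactly the image of $\Db(\cat{E})$ (using that $\Db(\cat{E})\hookrightarrow\Db(\cat{A})$ by Remark \ref{rmk:exact}). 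The reverse inclusion is the heart: given a perfect complex $\ke$ supported on $Z$, one must resolve it by objects of $\cat{E}$. I would argue as follows. By Proposition \ref{prop:amp}, $\cat{Amp}(Z,X,H)$ generates $\Coh_Z(X)$ in the strong sense; combined with the finite homological dimension available on the quasi-projective scheme $X$ and a standard truncation/resolution argument (the same used to produce \eqref{eqn:ampleres} in the proof of Proposition \ref{prop:extab}), every bounded complex of coherent sheaves on $Z$ — equivalently, by Proposition \ref{prop:Ball2}, every object of $\Db_Z(X)$ — admits a bounded-above resolution by finite sums of the $P_{i,j}=\ko_{|i|Z}(jH)$, which all lie in $\cat{E}$. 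For a \emph{perfect} complex $\ke$ this bounded-above resolution can be truncated stupidly at a sufficiently negative spot without losing perfectness: the stupid truncation differs from $\ke$ by a shift of a syzygy sheaf, and because $\ke$ is perfect (locally of finite Tor-dimension) the syzygy far enough down the resolution is again perfect, hence in $\cat{E}$. This exhibits $\ke$ as a finite convolution of objects of $\cat{E}$, so $\ke\in\Db(\cat{E})$. Finally, (E3): for $A,B\in\Db(\cat{E})\cap\cat{A}=\cat{E}$, the vanishing of $\Hom_{\Db(\cat{A})}(A,B[i])$ for $i\gg0$ is just the statement that $\cat{A}=\Coh_Z(X)$ — or rather the ambient $\Coh(X)$ on the quasi-projective $X$ — has finite homological dimension, which holds since $X$ is quasi-projective, so the bound $N(A)$ can be taken independent of $B$ (indeed equal to the global dimension). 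I expect the care needed is entirely in the truncation step preserving perfectness; everything else is bookkeeping with the results already in the excerpt.
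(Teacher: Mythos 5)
Most of your argument tracks the paper's reasoning: extension-closure and the two-out-of-three property give the exact structure and (E1), Proposition~\ref{prop:amp} gives (E2), and $\Db(\cat{E})\subseteq\Dp[Z](X)$ is immediate. For the reverse inclusion you take a different route from the paper: you build a long resolution of a perfect complex $\ke$ by finite sums of objects of $\cat{Amp}(Z,X,H)$ and stupidly truncate, observing that the syzygy $K_m$ in the resulting triangle is perfect by two-out-of-three since $R_m=\sigma_{\ge -m}$ and $\ke$ are. That works and is in the same spirit as the $R_m$ construction in Proposition~\ref{prop:extab}. The paper instead runs an induction on cohomological width, hitting $H^0(\ka)$ by a single $P_i^{\oplus k}$ and passing to the cone in $\cat{D}_{n-1}$; this avoids having to construct a full Cartan--Eilenberg-style resolution for a complex (a step you invoke without spelling out, but which is standard). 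Either route is acceptable.

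The real gap is (E3). You justify the uniform vanishing $\Hom_{\Db(\cat{A})}(A,B[i])=0$ for $i\gg0$ by appealing to ``finite homological dimension of $\Coh(X)$ since $X$ is quasi-projective.'' That is false when $X$ is singular, and singular $X$ is allowed under the standing hypothesis $\ko_{iZ}\in\Dp(X)$ (take $Z=X$ any singular quasi-projective scheme): $\Coh(X)$ then has infinite global dimension, and indeed $\Ext^i(\kf,\kg)$ can be nonzero for all $i$ if $\kf$ is not perfect. What rescues (E3) is precisely the perfectness of $A\in\cat{E}$, which your phrasing does not use. The correct argument, as in the paper, is
\[
\Hom_{\Db(\cat{A})}(A,B[i])\iso\Hom_{\Db(X)}(\ko_X,A\dual\otimes B[i])\iso H^i(X,A\dual\otimes B),
\]
where $A\dual$ makes sense and is bounded with finite Tor-amplitude because $A$ is perfect; hence $A\dual\otimes B$ lives in a fixed cohomological range depending only on $A$, and $H^i(X,-)$ vanishes beyond the (finite) cohomological dimension of $X$ by Grothendieck vanishing. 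This gives a bound $N(A)$ independent of $B$ without any smoothness. Your appeal to global dimension should be replaced by this duality-plus-cohomological-dimension argument.
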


\begin{proof}
The subcategory $\cat{E}$ is closed under extensions, hence $\cat{E}$
is a full exact subcategory of $\cat{A}$ (see
\cite[Sect.\ 4]{K2}). Condition (E1) follows from the fact that, if
$f$ is an admissible epimorphism in $\cat{E}$, then $\ker
f\in\cat{E}$. As $\ko_{iZ}\in\Dp(X)$ for all $i>0$, (E2) holds true
taking $\{P_i\}_{i\in I}=\cat{Amp}(Z,X,H)$ defined in
\eqref{eqn:almamp} (with $H$ an ample divisor on $X$).

Obviously $\Db(\cat{E})$ is a full subcategory of $\Dp[Z](X)$. To show
that they are actually equal, one has to apply an induction argument
similar to the one in the first part of the proof of Proposition
\ref{prop:extending1}. To give a hint, let $\cat{D}_n$ be the
(strictly) full subcategory of $\Dp[Z](X)$ with objects the complexes
$\ka$ with the following property: there exists $a\in\ZZ$ such that
$H^p(\ka)=0$ for $p<a$ or $p>a+n$. Given $\ka\in\Dp[Z](X)$, there
exists $n\ge0$ such that $\ka\in\cat{D}_n$, and one can prove that
$\ka\in\Db(\cat{E})$ by induction on $n$. Indeed, if $n=0$, there is
nothing to prove. Otherwise we can assume without loss of generality
that $H^p(\ka)=0$ for $p<-n$ or $p>0$. Then $\ka$ sits in a
distinguished triangle
\[
\kc[-1]\to P_i^{\oplus k}\to\ka\to\kc,
\]
where $P_i\in\cat{Amp}(Z,X,H)$, $k\in\NN$ and $\kc\in\cat{D}_{n-1}$.

As for (E3), one can prove more generally that for every
$\ka\in\Db(\cat{E})=\Dp[Z](X)$ there exists an integer $N(\ka)$ such
that $\Hom_{\Db(\cat{A})}(\ka,\kb[i])=0$, for every $i>N(\ka)$ and
every $\kb\in\cat{A}$. Indeed, this follows from the isomorphism
\[
\Hom_{\Db(\cat{A})}(\ka,\kb[i])\iso
\Hom_{\Db(X)}(\ko_X,\ka\dual\otimes\kb[i]),
\]
which holds because $\ka$ is perfect. More precisely, being the
cohomologies of $\ka\dual\otimes\kb$ bounded with bound depending
only on $\ka$, the vanishing of
$\Hom_{\Db(X)}(\ko_X,\ka\dual\otimes\kb[i])$, for $i>N(\ka)$, can be
deduced by induction on the cohomologies of $\ka\dual\otimes\kb$ using
Grothendieck vanishing theorem (see, for example, \cite[III, Thm.\ 2.7]{Ha}).
\end{proof}

\begin{remark}\label{rmK:2cond}
In view of Proposition \ref{prop:amp}, it is easy to see that, if $X$,
$Z$, $\cat{E}$ and $\cat{A}$ are as above, then condition $\Cc$ in the
introduction implies $\Ca$. Indeed, in this case (1) in $\Cc$ and
$\Ca$ coincide. As for (2), consider $C\in\Db(\cat{E})$ with trivial cohomologies in positive degrees. Then, by Proposition \ref{prop:amp}, there is an integer $N_1$ such that for all $i<N_1$ and $j\ll i$ part \eqref{ample2} of Definition \ref{def:almostample} holds true for $H^0(C)$ and $P_k:=\ko_{|i|Z}(jH)\in\cat{Amp}(Z,X,H)$, where $k=(i,j)$. By (2) in $\Cc$, we can take another integer $N_2$ such that, for $i'<N_2$ and $j'\ll i'$,
\[
\Hom(\fun{F}(C),\fun{F}(P_{k'}))=0,
\]
where $k'=(i',j')$. Considering $\min\{N_1,N_2\}$, this shows that (2) of $\Ca$ holds as well. Therefore, in the proof of Theorems
\ref{thm:main2} and \ref{thm:main1} we can freely use the results in
Section \ref{subsec:crit2}.
\end{remark}

It may be useful to keep in mind some examples of exact functors
satisfying $\Cc$.

\begin{ex}\label{ex:condfun}
In this example we assume that $X_1$ is a quasi-projective scheme with a projective subscheme $Z_1$ such that $\ko_{iZ_1}\in\Dp(X_1)$, for all $i>0$, and $T_0(\ko_{Z_1})=0$.
	
(i) Using \eqref{ample3} in Definition \ref{def:almostample} (which holds
thanks to Proposition \ref{prop:amp}), it is very easy to verify that full functors $\fun{F}\colon\Dp[Z_1](X_1)\to\Dp[Z_2](X_2)$ satisfy $\Cc$ for any scheme $X_2$ containing a closed subscheme $Z_2$.

(ii) For the same reason, a trivial example of a functor with
  the property $\Cc$ but which is not full is $\id\oplus\id\colon\Dp[Z_1](X_1)\to\Dp[Z_1](X_1)$.

(iii) Following the same argument as in \cite[Sect.\ 4]{CS}, in the supported
  setting one may take exact functors $\Db_{Z_1}(X_1)\to\Db_{Z_2}(X_2)$
  induced by exact full functors $\Coh_{Z_1}(X_1)\to\Coh_{Z_2}(X_2)$,
  where $X_1$ and $X_2$ are smooth quasi-projective varieties. These
  functors obviously satisfy $\Cc$.
\end{ex}

We conclude this section with the following easy result making clear
that in the smooth case without support conditions, $\Cb$ is
equivalent to $\Cc$ in the introduction.

\begin{prop}\label{prop:smooth}
Let $X_1$ be a smooth projective scheme such that $\dim(X_1)>0$ and let $X_2$ be a scheme containing
a closed subscheme $Z_2$. Then an exact functor
$\fun{F}\colon\Db(X_1)\to\Db_{Z_2}(X_2)$ satisfies $\Cc$ if and only if it
satisfies $\Cb$.
\end{prop}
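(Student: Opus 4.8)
\textbf{Proof proposal for Proposition \ref{prop:smooth}.} Since $X_1$ is smooth, $\Dp(X_1)=\Db(X_1)$, and the category $\cat{E}=\cat{A}=\Coh(X_1)$ of Proposition \ref{prop:excat} (with $Z_1=X_1$) is simply $\Coh(X_1)$; the distinguished ample set is $\cat{Amp}(X_1,X_1,H)=\{\ko_{X_1}(jH)\}_{j\in\ZZ}$ (all infinitesimal neighbourhoods are $X_1$ itself), which is an honest ample sequence. With this dictionary in place, part $(1)$ of $\Cc$ is literally $\Cb$, so the real content is to show that, granting $\Cb$, part $(2)$ of $\Cc$ is automatic. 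The plan is therefore to fix $\ka\in\Dp(X_1)=\Db(X_1)$ with trivial cohomologies in positive degrees and to exhibit the integer $N$ demanded by $(2)$. Since in the smooth projective case $\ko_{|i|X_1}(jH)=\ko_{X_1}(jH)$ regardless of $i$, the vanishing
\[
\Hom(\fun{F}(\ka),\fun{F}(\ko_{X_1}(jH)))=0
\]
only has to be verified for $j\ll 0$, and it suffices to produce a \emph{single} $j_0$ with this vanishing, since it will then hold for all $j\le j_0$ and any choice of $N$ works.

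First I would reduce to the case where $\ka$ is a sheaf. Using the stupid/gentle truncations of Section \ref{subsec:prelcat}, $\ka$ (having cohomology only in non-positive degrees) fits into finitely many distinguished triangles built from its cohomology sheaves $H^{-p}(\ka)$, $p\ge 0$; applying the cohomological functor $\Hom(-,\fun{F}(\ko_{X_1}(jH)))$ turns the desired vanishing for $\ka$ into the conjunction of the analogous statements for each shifted sheaf $H^{-p}(\ka)[p]$. Because a shift only translates the relevant $\Ext$-degree, it is enough to treat $\ka=\kb\in\Coh(X_1)$. Now the key input is the left-adjoint trick already used in Lemma \ref{lem:adjoint}: but here we do not have an adjoint for free, so instead I would argue directly via an ample resolution. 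Pick a surjection $\ko_{X_1}(-mH)^{\oplus r}\epi\kb$ for $m\gg 0$; its kernel is again coherent, and iterating we get a (possibly infinite, but we only need finitely many terms) resolution of $\kb$ by direct sums of line bundles $\ko_{X_1}(-m_\ell H)$ with $m_\ell\to\infty$. Since $X_1$ is smooth projective of finite homological dimension $n=\dim X_1$, the resolution becomes exact after $n$ steps, so $\kb$ sits in a convolution/iterated-triangle built from $\ko_{X_1}(-m_0H)^{\oplus r_0},\dots,\ko_{X_1}(-m_nH)^{\oplus r_n}$ placed in degrees $0,\dots,-n$.

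Applying $\fun{F}$ and then $\Hom(-,\fun{F}(\ko_{X_1}(jH)))$, the vanishing for $\fun{F}(\kb)$ reduces (by the long exact sequences coming from the $n+1$ triangles) to the vanishing of
\[
\Hom(\fun{F}(\ko_{X_1}(-m_\ell H)),\fun{F}(\ko_{X_1}(jH))[k])
\]
for $\ell=0,\dots,n$ and for a bounded range of $k$ (namely $-n\le k\le 0$), since all higher positive $\Ext$'s between the two sheaves $\ko_{X_1}(-m_\ell H)$ and $\ko_{X_1}(jH)$ vanish once $j\ll -m_\ell$ by Serre vanishing, and negative ones vanish by $\Cb$. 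For $k<0$ this is exactly $\Cb$. For $k=0$ I would use that $\Hom_{\Db(X_1)}(\ko_{X_1}(-m_\ell H),\ko_{X_1}(jH))=H^0(X_1,\ko_{X_1}((j+m_\ell)H))=0$ as soon as $j+m_\ell<0$ \emph{when $\dim X_1>0$} (this is precisely where the hypothesis $\dim(X_1)>0$ enters — on a point $H^0$ never vanishes), so that the two objects $\ko_{X_1}(-m_\ell H)$ and $\ko_{X_1}(jH)$ have no morphisms between them in \emph{any} degree for $j$ sufficiently negative, and any exact functor kills $\Hom$'s out of a direct sum of such; hence $\Hom(\fun{F}(\ko_{X_1}(-m_\ell H)),\fun{F}(\ko_{X_1}(jH))[k])=0$ for all $k\le 0$ and all $\ell$, provided $j<-\max_\ell m_\ell$. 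Taking such a $j_0$ finishes the argument. The converse implication, that $\Cc$ implies $\Cb$, is immediate since part $(1)$ of $\Cc$ \emph{is} $\Cb$ under the identification $\cat{A}=\Coh(X_1)$.

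\textbf{Main obstacle.} The delicate point is the $k=0$ case: one must genuinely use $\dim(X_1)>0$ to force $\Hom_{\Db(X_1)}(\ko_{X_1}(-m_\ell H),\ko_{X_1}(jH))=0$ for $j\ll 0$, and then be careful that an exact functor $\fun{F}$ applied to a zero object gives zero and respects finite direct sums, so that no $\Hom$ can sneak back in after applying $\fun{F}$. Equivalently, one checks that for $j\ll 0$ each summand $\ko_{X_1}(-m_\ell H)$ of the resolution and the target $\ko_{X_1}(jH)$ are ``completely orthogonal'' in $\Db(X_1)$ (no $\Ext$ in any degree, using Serre vanishing on one side and the $\dim>0$ vanishing of global sections together with Serre duality — or simply $\Cb$ — on the other), whence $\fun{F}$ sends them to completely orthogonal objects; propagating this through the finitely many triangles that assemble $\fun{F}(\kb)$ then yields part $(2)$ of $\Cc$ with no further hypotheses on $\fun{F}$ beyond $\Cb$.
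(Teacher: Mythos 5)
Your reduction of part $(2)$ of $\Cc$ to a single sufficiently negative twist $j_0$, and the idea of resolving a coherent sheaf $\kb$ by direct sums of $\ko_{X_1}(-m_\ell H)$ using finite homological dimension, are both sensible starting points. However, the pivotal step for the $k=0$ case contains two independent errors that cannot be repaired within the strategy you chose.

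First, the claimed orthogonality in the source is false. You assert that for $j\ll 0$ the line bundles $\ko_{X_1}(-m_\ell H)$ and $\ko_{X_1}(jH)$ have no $\Ext$ in any degree, invoking Serre vanishing. But Serre vanishing applies to large \emph{positive} twists, whereas here $j+m_\ell\ll 0$. In fact, with $n=\dim X_1>0$, Serre duality gives
\[
\Ext^n\bigl(\ko_{X_1}(-m_\ell H),\ko_{X_1}(jH)\bigr)\iso H^n\bigl(X_1,\ko_{X_1}((j+m_\ell)H)\bigr)\iso H^0\bigl(X_1,\omega_{X_1}(-(j+m_\ell)H)\bigr)^\vee,
\]
which is \emph{nonzero} once $-(j+m_\ell)\gg 0$, i.e.\ precisely in the regime you work in. Second, and more fundamentally, even if the two objects were completely orthogonal in $\Db(X_1)$, an arbitrary exact functor $\fun{F}$ satisfying $\Cb$ has no reason to send orthogonal objects to orthogonal objects; this fails already for non-faithful or collapsing functors, and $\Cb$ only controls \emph{negative} $\Ext$ on the target side. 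So the inference from vanishing of $\Hom_{\Db(X_1)}(\ko_{X_1}(-m_\ell H),\ko_{X_1}(jH))$ to vanishing of $\Hom(\fun{F}(\ko_{X_1}(-m_\ell H)),\fun{F}(\ko_{X_1}(jH)))$ is unjustified, and this is exactly the content one needs.

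The paper closes this gap with a tool you explicitly set aside: since $X_1$ is smooth projective, $\Db(X_1)=\Dp(X_1)$, and Bondal--Van den Bergh \cite{BB} (cf.\ also \cite[Rmk.\ 2.1]{CS}) provides a \emph{left adjoint} $\fun{F}^*$ to $\fun{F}$. One then runs the argument of Lemma \ref{lem:adjoint}: for $C$ with cohomology in non-positive degrees, $\Cb$ forces $H^p(\fun{F}^*\fun{F}(C))=0$ for $p>0$, so
\[
\Hom(\fun{F}(C),\fun{F}(P_i))\iso\Hom(\fun{F}^*\fun{F}(C),P_i)\iso\Hom\bigl(H^0(\fun{F}^*\fun{F}(C)),P_i\bigr),
\]
and this last group vanishes for suitable $i$ by property \eqref{ample3} of the weakly ample set, which is where $\dim X_1>0$ (equivalently $T_0(\ko_{X_1})=0$) enters, exactly as Proposition \ref{prop:amp} records. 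The refinement to \emph{all} $i<N$, $j\ll i$ then follows from the last sentence of Proposition \ref{prop:amp}. In short: you correctly located where $\dim X_1>0$ must be used, but the mechanism has to pass through the adjoint on the \emph{target} side, not through any orthogonality in the source category.
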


\begin{proof}
Clearly it is enough to show that $\Cb$ implies (2) in $\Cc$. Since
$Z_1=X_1$, $\ko_{|i|Z_1}(jH_1)=\ko_{X_1}(jH_1)$, for all
$i,j\in\ZZ$, and when $j$ varies they give rise to an almost ample set. Hence it is enough to show that for any $\ka\in\Db(X_1)$
with trivial cohomologies in positive degrees, there is
$N\in\ZZ$ such that $\Hom(\fun{F}(\ka),\fun{F}(\ko_{X_1}(iH_1)))=0$
for any $i<N$. Observing that $\fun{F}$ has a left adjoint $\fun{F}^*$
by \cite{BB} (see also \cite[Rmk.\ 2.1]{CS}), the same argument as in
the proof of Lemma \ref{lem:adjoint} ensures that
$\fun{F}^*\comp\fun{F}(\ka)$ has trivial cohomologies in positive degrees.

Assume, without loss of generality that $H^0(\fun{F}^*\comp\fun{F}(\ka))$ is the last non-trivial cohomology. Then we have a distinguished triangle
\[
\ka'\to\fun{F}^*\comp\fun{F}(\ka)\to H^0(\fun{F}^*\comp\fun{F}(\ka))\to\ka'[1]
\]
where $\ka'$ has non-trivial cohomologies only in negative degrees. Hence $\Hom(\ka',\kb)=0$, for all $\kb\in\Coh(X_1)$.

By the last statement in Proposition \ref{prop:amp} (and \eqref{ample3} of Definition \ref{def:almostample}), there is an integer $N$ such that, for all $i<N$ we have $\Hom(H^0(\fun{F}^*\comp\fun{F}(\ka)),\ko_{X_1}(iN_1))=0$. By the above remark and adjunction, we then have
\[
0=\Hom(\fun{F}^*\comp\fun{F}(\ka),\ko_{X_1}(iN_1))\iso\Hom(\fun{F}(\ka),\fun{F}(\ko_{X_1}(iN_1))),
\]
for all $i<N$. This is precisely (2) in $\Cc$.
\end{proof}

\begin{ex}\label{ex:smooth}
In view of Proposition \ref{prop:smooth} and of \cite[Prop.\ 5.1]{CS}, a non-trivial class of exact functors satisfying $\Cc$ is provided by the functors $\Db(X_1)\to\Db(X_2)$ induced by exact functors $\Coh(X_1)\to\Coh(X_2)$. Here we assume that $X_1$ and $X_2$ are smooth projective varieties and that $\dim(X_1)>0$.
\end{ex}

As a consequence of Proposition \ref{prop:smooth}, Theorem
\ref{thm:main2} generalizes the main result of \cite{CS} when
the twists from the Brauer groups are trivial.

\section{Enhancements and existence of Fourier--Mukai kernels}\label{sec:existence}

In this section we show how to construct Fourier--Mukai kernels for
functors satisfying the condition $\Cc$ defined in the introduction. This extends several results already present in the literature. Moreover we show that, in the supported setting, the Fourier--Mukai kernels have to be quasi-coherent rather than coherent. We need also to recall some basic facts about dg categories. As an application of this machinery and of the results in the previous sections, we get the proof of Theorem \ref{thm:main1}. Here and for the rest of the paper, we fix a universe such that all dg categories are small dg categories with respect to this universe (see \cite[Appendix A]{LO}).

\subsection{Dg categories}\label{subsec:dg}

In this section we give a quick introduction to some basic definitions and results about dg categories and dg functors. For a survey on the subject, the reader can have a look at \cite{K}.

Recall that a \emph{dg category} is a $\K$-linear category $\cat{A}$
such that, for all $A,B\in\Ob(\cat{A})$, the morphism spaces
$\Hom(A,B)$ are $\ZZ$-graded $\K$-modules with a differential
$d\colon\Hom(A,B)\to\Hom(A,B)$ of degree $1$ and the composition maps
are morphisms of complexes. Notice that the identity of each object is
a closed morphism of degree $0$.

\begin{ex}\label{ex:dgcat1}
(i) Any $\K$-linear category has a (trivial) structure of dg category,
with morphism spaces concentrated in degree $0$.

(ii) For a dg category $\cat{A}$, one defines the opposite dg category $\cat{A}\opp$ with $\Ob(\cat{A}\opp)=\Ob(\cat{A})$ while $\Hom_{\cat{A}\opp}(A,B):=\Hom_{\cat{A}}(B,A)$.

(iii) Following \cite{Dr}, given a dg category $\cat{A}$ and a full dg
subcategory $\cat{B}$ of $\cat{A}$, one can form the quotient
$\cat{A}/\cat{B}$ which is again a dg category.

(iv) Given an abelian category $\cat{A}$, one can consider the dg
category $\Cdg(\cat{A})$ of complexes of objects in $\cat{A}$, its
full dg subcategory $\Acdg(\cat{A})$ of acyclic complexes and the
dg quotient $\Dg(\cat{A}):=\Cdg(\cat{A})/\Acdg(\cat{A})$. When
$\cat{A}=\Qcoh_Z(X)$, for $X$ a scheme containing a closed subscheme $Z$, we denote the dg categories $\Cdg(\cat{A})$,
$\Acdg(\cat{A})$ and $\Dg(\cat{A})$ respectively by $\Cdg_Z(X)$,
$\Acdg_Z(X)$ and $\Dg_Z(X)$.
\end{ex}

Given a dg category $\cat{A}$ we denote by $H^0(\cat{A})$ its
\emph{homotopy} category. The objects of $H^0(\cat{A})$ are the same
as those of $\cat{A}$ while the morphisms from $A$ to $B$ are obtained
by taking the $0$-th cohomology $H^0(\Hom_{\cat{A}}(A,B))$ of the
complex $\Hom_{\cat{A}}(A,B)$. If $\cat{A}$ is pretriangulated (see
\cite{K} for the definition), then $H^0(\cat{A})$ has a natural
structure of triangulated category.

\begin{ex}\label{ex:dgcat2}
Given an abelian category $\cat{A}$, the dg categories $\Cdg(\cat{A})$,
$\Acdg(\cat{A})$ and $\Dg(\cat{A})$ are pretriangulated and, as it is
explained for example in \cite[Sect.\ 4.4]{K}, there is an exact
equivalence between the derived category $\D(\cat{A})$ and the
homotopy category $H^0(\Dg(\cat{A}))$. In particular, when $X$ is a
scheme containing a closed subscheme $Z$, the dg category
$\Dg_Z(X)$ is pretriangulated and $H^0(\Dg_Z(X))\iso\D_Z(\Qcoh(X))$
(here one uses that $\D_Z(\Qcoh(X))$ is naturally equivalent to
$\D(\Qcoh_Z(X))$ by Proposition \ref{prop:Ball2}).
\end{ex}

\medskip

A \emph{dg functor} $\fun{F}\colon\cat{A}\to\cat{B}$ between two dg
categories is the datum of a map $\Ob(\cat{A})\to\Ob(\cat{B})$ and of
morphisms of complexes of $\K$-modules
$\Hom_{\cat{A}}(A,B)\to\Hom_{\cat{B}}(\fun{F}(A),\fun{F}(B))$, for
$A,B\in\Ob(\cat{A})$, which are compatible with the compositions and
the units.

A dg functor $\fun{F}\colon\cat{A}\to\cat{B}$ induces a functor
$H^0(\fun{F})\colon H^0(\cat{A})\to H^0(\cat{B})$, which is exact
(between triangulated categories) if $\cat{A}$ and $\cat{B}$ are
pretriangulated.

A dg functor $\fun{F}\colon\cat{A}\to\cat{B}$ is a
\emph{quasi-equivalence}, if the maps
$\Hom(A,B)\to\Hom(\fun{F}(A),\fun{F}(B))$ are quasi-isomorphisms, for
every $A,B\in\cat{A}$, and $H^0(\fun{F})$ is an equivalence. One can
consider the localization $\Hqe$ of the category of dg categories over
$\K$ with respect to quasi-equivalences (\cite{To}). Given
a dg functor $\fun{F}$, we will denote with the
same symbol its image in $\Hqe$. In particular, if $\fun{F}$ is a
quasi-equivalence, we denote by $\fun{F}^{-1}$ the morphism in $\Hqe$ which is the inverse of $\fun{F}$.

For a small dg category $\cat{A}$, one can consider the
pretriangulated dg category $\dgMod{\cat{A}}$ of \emph{right dg
$\cat{A}$-modules}. A right dg $\cat{A}$-module is a dg functor
$\fun{M}\colon\cat{A}\opp\to\dgMod{\K}$, where $\dgMod{\K}$ is the dg
category of dg $\K$-modules. The full dg subcategory of acyclic right
dg modules is denoted by $\Ac(\cat{A})$, and $H^0(\Ac(\cat{A}))$ is a
full triangulated subcategory of the homotopy category
$H^0(\dgMod{\cat{A}})$. Hence the \emph{derived category} of the dg
category $\cat{A}$ is the Verdier quotient
\[
\dgD(\cat{A}):=H^0(\dgMod{\cat{A}})/H^0(\Ac(\cat{A})).
\]
A right dg $\cat{A}$-module is \emph{representable} if it is contained
in the image of the Yoneda dg functor
\[
\dgYon\colon\cat{A}\to\dgMod{\cat{A}}\qquad
A\mapsto\Hom_{\cat{A}}(-,A)=:\dgYon[A].
\]
A right dg $\cat{A}$-module is \emph{free} if it is isomorphic to a
direct sum of dg modules of the form
$\dgYon[A][m]$, where $A\in\cat{A}$ and $m\in\ZZ$. A right dg $\cat{A}$-module $\fun{M}$ is \emph{semi-free} if it has a filtration
\[
0=\fun{M}_0\subseteq\fun{M}_1\subseteq\ldots=\fun{M}
\]
such that $\fun{M}_i/\fun{M}_{i-1}$ is free, for all $i$. We denote by
$\SF(\cat{A})$ the full dg subcategory of semi-free dg modules, while
$\SFfg(\cat{A})\subseteq\SF(\cat{A})$ is the full dg subcategory of
\emph{finitely generated semi-free} dg modules. Namely, there is $n$
such that $\fun{M}_n=\fun{M}$ and each $\fun{M}_i/\fun{M}_{i-1}$ is a
finite direct sum of dg modules of the form $\dgYon[A][m]$. The dg
modules which are homotopy equivalent to direct summands of finitely
generated semi-free dg modules are called \emph{perfect} and they form
a full dg subcategory $\Perf(\cat{A})$.

Following \cite{K,To}, given two dg categories $\cat{A}$ and $\cat{B}$, we denote by $\rep(\cat{A},\cat{B})$ the full subcategory of the derived category
$\dgD(\cat{A}\opp\otimes_\K\cat{B})$ of $\cat{A}$-$\cat{B}$-bimodules $\fun{C}$ such that the functor
$(-)\otimes_{\cat{A}}\fun{C}\colon\dgD(\cat{A})\to\dgD(\cat{B})$
sends the representable $\cat{A}$-modules to objects which are
isomorphic to representable $\cat{B}$-modules. An object in
$\rep(\cat{A},\cat{B})$ is called a \emph{quasi-functor}.
By \cite{To}, morphisms in $\Hqe$ are in natural bijection with
isomorphism classes of quasi-functors. Thus, with a slight abuse of notation, we sometimes call quasi-functor a morphism in $\Hqe$. Notice that a quasi-functor
$\fun{M}\in\rep(\cat{A},\cat{B})$ induces a functor
$H^0(\fun{M})\colon H^0(\cat{A})\to H^0(\cat{B})$, well defined up to
isomorphism.

For $\fun{F}\colon\cat{A}\to\cat{B}$ a dg functor, there exist dg functors
\[
\fun{F}^*\colon\dgMod{\cat{A}}\to\dgMod{\cat{B}}\qquad
\fun{F}_*\colon\dgMod{\cat{B}}\to\dgMod{\cat{A}}
\]
also denoted, respectively, by $\Ind_{\fun{F}}$ and $\Res_{\fun{F}}$.
While $\fun{F}_*$ is simply induced by composition with $\fun{F}$, the
reader can have a look at \cite[Sect.\ 14]{Dr} for the definition and
properties of $\fun{F}^*$. In particular, $\fun{F}^*$ is left adjoint
to $\fun{F}_*$ and commutes with the Yoneda embeddings, up to dg
isomorphism. Moreover, $\fun{F}^*$ preserves semi-free dg modules and
$\fun{F}^*\colon\SF(\cat{A})\to\SF(\cat{B})$ is a quasi-equivalence if
$\fun{F}\colon\cat{A}\to\cat{B}$ is such.

\medskip

Given two pretriangulated dg categories $\cat{A}$ and $\cat{B}$ and
an exact functor $\fun{F}\colon H^0(\cat{A})\to H^0(\cat{B})$, a \emph{dg lift} of $\fun{F}$ is a quasi-functor $\fun{G}\in\rep(\cat{A},\cat{B})$ such that $H^0(\fun{G})\iso\fun{F}$.

An \emph{enhancement} of a triangulated category $\cat{T}$ is a pair
$(\cat{A},\alpha)$, where $\cat{A}$ is a pretriangulated
dg category and $\alpha\colon H^0(\cat{A})\to\cat{T}$ is an exact
equivalence. The enhancement $(\cat{A},\alpha)$ of $\cat{T}$ is
\emph{unique} if for any enhancement $(\cat{B},\beta)$ of $\cat{T}$
there exists a quasi-functor $\gamma\colon\cat{A}\to\cat{B}$ such that
$H^0(\gamma)\colon H^0(\cat{A})\to H^0(\cat{B})$ is an exact equivalence. We
say that the enhancement is \emph{strongly unique} if moreover
$\gamma$ can be chosen so that
$\alpha\iso\beta\comp H^0(\gamma)$. Often, by abuse of notation, we will say that a pretriangulated dg category $\cat{A}$ is an enhancement of a triangulated category $\cat{T}$ when there is an exact equivalence $H^0(\cat{A})\iso\cat{T}$.

\begin{ex}\label{ex:derenh}
If $\cat{A}$ is a dg category, $\SF(\cat{A})$ and $\Perf(\cat{A})$ are
enhancements, respectively, of $\dgD(\cat{A})$ and
$\dgD(\cat{A})^c$.
\end{ex}

\begin{ex}\label{ex:enh}
Let $\cat{A}$ be a pretriangulated dg category and $\cat{B}$ a full pretriangulated dg subcategory of $\cat{A}$. By \cite{Dr}, there exists a natural exact equivalence between the Verdier quotient $H^0(\cat{A})/H^0(\cat{B})$ and $H^0(\cat{A}/\cat{B})$. Hence $\cat{A}/\cat{B}$ is an enhancement of $H^0(\cat{A})/H^0(\cat{B})$.
\end{ex}

\begin{ex}\label{ex:dgcat3}
By Example \ref{ex:dgcat2}, $\Dg(\cat{A})$ (for $\cat{A}$ an abelian
category) is an enhancement of $\D(\cat{A})$. Moreover, if $X$ is a
scheme containing a closed subscheme $Z$, $\Dg_Z(X)$ is an
enhancement of $\D_Z(\Qcoh(X))$. Let $\Perf_{Z}(X)$ be the full dg subcategory of $\Dg_{Z}(X)$ consisting of compact objects in $H^0(\Dg_{Z}(X))$. Notice that $\Perf_{Z}(X)$ is an enhancement of $\Dp[Z](X)$ and, as we mentioned above, we will identify $H^0(\Perf_{Z}(X))$ with $\Dp[Z](X)$, to make the notation simpler.
\end{ex}

\subsection{Enhancements and the proof of Theorem \ref{thm:main1}}\label{subsec:enh}

Let $X$ be a quasi-projective scheme containing a projective
subscheme $Z$ and let $H$ be an ample divisor on $X$. Assume that
$\ko_{iZ}\in\Dp(X)$ for all $i>0$ (hence the
full subcategory $\cat{Amp}(Z,X,H)$ defined in
\eqref{eqn:almamp} is contained in $\Dp[Z](X)$). Consider the
dg category $\cat{A}:=\Coh_Z(X)\cap\Dp[Z](X)$ concentrated in degree
zero and notice that, due to Remark \ref{rmk:ampleset} and Proposition
\ref{prop:amp}, the objects in $\cat{A}$ satisfy \eqref{ample2} in
Definition \ref{def:almostample}. By abuse of notation, we will write
$\dYon$ for the functor $\cat{A}\to\dgD(\cat{A})$ which is the
composition of $H^0(\dgYon)\colon\cat{A}=H^0(\cat{A})\to
H^0(\dgMod{\cat{A}})$ and of the quotient functor
$H^0(\dgMod{\cat{A}})\to\dgD(\cat{A})$.
As a matter of notation, if $\cat{T}$ is a triangulated category with
arbitrary direct sums and $\cat{L}$ is a localizing subcategory of
$\cat{T}$, we denote by $\pi\colon\cat{T}\to\cat{T}/\cat{L}$ the quotient functor. Recall that a full triangulated
subcategory $\cat{S}$ of a triangulated category $\cat{T}$ is
\emph{localizing} if it is closed under arbitrary direct
sums.

\begin{lem}\label{lem:uniqueenhance}
There exists an exact equivalence
$\varphi\colon\D_Z(\Qcoh(X))\to\dgD(\cat{A})/\cat{L}$, for some
localizing subcategory $\cat{L}\subseteq\dgD(\cat{A})$, such that we have an isomorphism of functors $\pi\comp\dYon\iso\varphi\rest{\cat{A}}$. Moreover $\D_Z(\Qcoh(X))$ has a unique enhancement.
\end{lem}

\begin{proof}
By Proposition \ref{prop:amp}, the category
$\cat{Amp}(Z,X,H)\subseteq\cat{A}$ is a set of compact generators for
the Grothendieck category $\Qcoh_{Z}(X)$.
Then take the abelian category $\Mod{\cat{A}}$ of modules over
$\cat{A}$, i.e.\ $\K$-linear contravariant functors from $\cat{A}$ to
the category of $\K$-modules. As it is explained in \cite{CS4} there is a Serre subcategory $\cat{N}$ of $\Mod{\cat{A}}$ such that $\Qcoh_{Z}(X)\iso\Mod{\cat{A}}/\cat{N}$. By \cite[Lemma 7.2]{LO}, we then have an equivalence
\begin{equation}\label{eqn:inet}
\D(\Qcoh_{Z}(X))\iso\D(\Mod{\cat{A}})/\D_{\cat{N}}(\Mod{\cat{A}}),
\end{equation}
where $\D_{\cat{N}}(\Mod{\cat{A}})$ is the full subcategory of
$\D(\Mod{\cat{A}})$ consisting of complexes with cohomologies in
$\cat{N}$. As observed at the beginning of Section 7 in \cite{LO},
there exists a natural equivalence
$\psi\colon\D(\Mod{\cat{A}})\to\dgD(\cat{A})$. Hence we set $\cat{L}$ to be the
full subcategory of $\dgD(\cat{A})$ corresponding to
$\D_{\cat{N}}(\Mod{\cat{A}})$ under $\psi$.

We define $\varphi$ to be the composition of \eqref{eqn:inet} with the
equivalences
$\D(\Mod{\cat{A}})/\D_{\cat{N}}(\Mod{\cat{A}})\iso\dgD(\cat{A})/\cat{L}$
(induced by $\psi$) and $\D_{Z}(\Qcoh(X))\iso\D(\Qcoh_{Z}(X))$ (see
Proposition \ref{prop:Ball2}). The fact that there is an isomorphism
of functors $\pi\comp\dYon\iso\varphi\rest{A}$ is observed in
\cite{CS4}, where the above construction is analyzed further. Notice
that the Yoneda embedding $\dYon\colon\cat{A}\to\D(\cat{A})$ coincides
with the classical Yoneda embedding $\cat{A}\to\Mod{\cat{A}}$, composed
with the natural inclusion $\Mod{\cat{A}}\mono\D(\Mod{\cat{A}})$ and
with $\psi$.

The second part of the statement is a straightforward consequence of \cite[Thm.\ 7.5]{LO}.
\end{proof}

As a consequence, we have an equivalence
\begin{equation}\label{eqn:alpha}
\alpha\colon\Dp[Z](X)\to(\dgD(\cat{A})/\cat{L})^c
\end{equation}
induced by
$\varphi$ (i.e.\ we set $\alpha:=\varphi\rest{\Dp[Z](X)}$). This is
because $\varphi$, being an equivalence, sends compact objects to
compact objects. By Lemma \ref{lem:uniqueenhance}, we have an isomorphism of functors
$\alpha^{-1}\comp\pi\comp\dYon\iso\id_{\cat{A}}$ (where $\alpha^{-1}$ denotes a
quasi-inverse of $\alpha$).

Let $\cat{L}'$ be the lift of the localizing subcategory $\cat{L}$ to $\dgMod{\cat{A}}$. Let $\cat{D}$ be the full dg subcategory of
$\SF(\cat{A})/(\SF(\cat{A})\cap\cat{L}')$ (which, by Examples
\ref{ex:derenh} and \ref{ex:enh}, is an enhancement of
$\dgD(\cat{A})/\cat{L}$) consisting of the compact objects in $H^0(\SF(\cat{A})/(\SF(\cat{A})\cap\cat{L}'))$. Obviously, $\cat{D}$ is an enhancement of $(\dgD(\cat{A})/\cat{L})^c$ in a natural way. In view of this, we will identify $H^0(\cat{D})$ with $(\dgD(\cat{A})/\cat{L})^c$.

\begin{lem}\label{lem:lift}
If $(\cat{B},\beta)$ is an enhancement of $\Dp[Z](X)$, there exists a quasi-functor $\delta\colon\cat{D}\to\cat{B}$ such that $H^0(\delta)$ is an exact equivalence and there is an isomorphism of functors $\cat{A}\to H^0(\cat{B})$
\begin{equation}\label{eqn:isouniq1}
H^0(\delta)\comp\pi\comp\dYon\isomor(\alpha\comp\beta)^{-1}\comp\pi\comp\dYon.
\end{equation}
\end{lem}

\begin{proof}
Due to \cite{CS4}, the category $\cat{L}$ is compactly generated in $\dgD(\cat{A})\iso\D(\Mod{\cat{A}})$ and $\cat{L}^c=\cat{L}\cap\dgD(\cat{A})^c$. Hence we can apply \cite[Thm.\ 6.4]{LO} to the exact equivalence $(\alpha\comp\beta)^{-1}$, providing the quasi-functor $\delta\colon\cat{D}\to\cat{B}$ in the statement. Indeed, $H^0(\delta)$ is fully faithful (by (1) in \cite[Thm.\ 6.4]{LO}), satisfies \eqref{eqn:isouniq1} (by (2) in \cite[Thm.\ 6.4]{LO}) and is essentially surjective (by (3) in \cite[Thm.\ 6.4]{LO} and the fact that $(\alpha\comp\beta)^{-1}$ is an equivalence).
\end{proof}

Now we want to prove Theorem \ref{thm:main1} and so we assume further
that $T_0(\ko_{Z})=0$. Let us reproduce the statement here for the convenience of the reader.

\begin{thm}
Let $X$ be a quasi-projective scheme containing a projective
subscheme $Z$ such that $\ko_{iZ}\in\Dp(X)$, for all $i>0$, and
$T_0(\ko_{Z})=0$. Then $\Dp[Z](X)$ has a strongly unique enhancement.
\end{thm}

\begin{proof}
Let $(\cat{B},\beta)$ be an enhancement of $\Dp[Z](X)$. By Lemma \ref{lem:lift}, there exists a quasi-functor $\delta\colon\cat{D}\to\cat{B}$ such that $H^0(\delta)$ is an exact equivalence satisfying \eqref{eqn:isouniq1}.

By Proposition \ref{prop:excat}, there is a natural exact equivalence
$\varepsilon\colon\Db(\cat{E})\to\Dp[Z](X)$, for the exact category
$\cat{E}=\Dp[Z](X)\cap\Coh_Z(X)$. Setting
$\varpi:=\alpha\comp\varepsilon$, we have
$\varpi^{-1}\comp\pi\comp\dYon\iso\id_{\cat{A}}$.

Put $\fun{F}_1:=H^0(\delta)\comp\varpi$ and $\fun{F}_2:=(\alpha\comp\beta)^{-1}\comp\varpi$. Then \eqref{eqn:isouniq1} reads as
\begin{equation}\label{eqn:isoimp}
\fun{F}_1\rest{\cat{A}}\isomor\fun{F}_2\rest{\cat{A}}.
\end{equation}
By Corollary \ref{cor:critiso2}, it extends to a unique isomorphism
$\fun{F}_1\iso\fun{F}_2$. Notice that this is the point where we use
that $T_0(\ko_Z)=0$ as, under this assumption, $\cat{A}$ is an almost ample set (by Proposition \ref{prop:amp} and Remark \ref{rmk:ampleset}) and, using this, every full functor certainly satisfies $\Ca$ (see Remark \ref{rmK:2cond} and Example \ref{ex:condfun}). Therefore there exists an isomorphism between $H^0(\delta)$ and $(\alpha\comp\beta)^{-1}$.

This proves that the enhancement of $\Dp[Z](X)$ is strongly
unique. Indeed, suppose that $(\cat{B}_1,\beta_1)$ and
$(\cat{B}_2,\beta_2)$ are enhancements of
$(\dgD(\cat{A})/\cat{L})^c$. By the above discussion, there are
quasi-equivalences $\delta_i\colon\cat{D}\to\cat{B}_i$ and unique
isomorphisms $H^0(\delta_i)\iso\beta_i^{-1}\comp\alpha^{-1}$. To
conclude, if we set
$\tilde\delta:=\delta_2\comp\delta_1^{-1}\colon\cat{B}_1\to\cat{B}_2$,
we have $\beta_2\comp H^0(\tilde\delta)\iso\beta_1$.
\end{proof}

In view of Example \ref{ex:geosetting}, it is straightforward to deduce the following special instance of Theorem \ref{thm:main1}.

\begin{cor}\label{cor:LO1}
Let $X$ be a quasi-projective scheme containing a projective
subscheme $Z$ such that $T_0(\ko_{Z})=0$ and either $X$ is smooth or $X=Z$. Then $\Dp[Z](X)$ has a strongly unique enhancement.
\end{cor}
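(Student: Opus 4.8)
The plan is to deduce the corollary from the argument proving Theorem \ref{thm:main1} in Section \ref{subsec:enh}, the point being that this argument uses the smoothness of $X$ \emph{only} through the consequence $\ko_{iZ}\in\Dp(X)$ for all $i>0$. First I would record, via Example \ref{ex:geosetting}, that this condition holds in both of the cases allowed here---$X$ smooth quasi-projective, or $X=Z$---so that, for any ample divisor $H$ on $X$, the full subcategory $\cat{A}:=\cat{Amp}(Z,X,H)$ of \eqref{eqn:almamp} lies in $\Dp[Z](X)$ and (by Proposition \ref{prop:excat}) $\Dp[Z](X)=\Db(\cat{E})$ for the exact category $\cat{E}:=\Dp[Z](X)\cap\Coh_Z(X)$. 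When $X$ is smooth this is exactly Theorem \ref{thm:main1}; the genuinely new case is $X=Z$, a possibly singular projective scheme, for which $\Dp[Z](X)=\Dp(X)$.

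For that case I would re-run Section \ref{subsec:enh} verbatim. By Proposition \ref{prop:amp}, $\cat{Amp}(Z,X,H)$ is a set of compact generators of the Grothendieck category $\Qcoh_Z(X)$, so Lemma \ref{lem:uniqueenhance} supplies an exact equivalence $\varphi\colon\D_Z(\Qcoh(X))\isomor\Dg(\cat{A})/\cat{L}$ with $\varphi^{-1}(\pi(h^P))=P$ for $P\in\cat{A}$, together with the uniqueness of the enhancement of $\D_Z(\Qcoh(X))$; Lemma \ref{lem:subcatcomp} gives $\cat{L}^c=\cat{L}\cap\Dg(\cat{A})^c$, whence \cite[Thm.\ 6.4]{LO} applies to $(\Dg(\cat{A}),\cat{L})$ and produces the model $\cat{C}\subseteq\SF(\cat{A})/(\cat{L}\cap\SF(\cat{A}))$ of $\Dp[Z](X)\iso(\Dg(\cat{A})/\cat{L})^c$. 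Given an enhancement $(\cat{B},\beta)$ of $\Dp[Z](X)$, one then gets a quasi-equivalence relating $\cat{C}$ and $\cat{B}$ and an isomorphism between the restrictions to $\cat{A}$ of the two resulting exact functors $\fun{F}_1,\fun{F}_2$. Since $T_0(\ko_Z)=0$, a full functor out of $\Dp[Z](X)$ satisfies $\Cc$, hence $\Ca$ by Remark \ref{rmK:2cond}, so Corollary \ref{cor:critiso2} extends this to an isomorphism $\fun{F}_1\iso\fun{F}_2$; the concluding bookkeeping of Section \ref{subsec:enh} then upgrades uniqueness of the enhancement to strong uniqueness, giving the claim.

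The only step that genuinely needs care---and the one I would flag as the main (if modest) obstacle---is checking that nothing invoked above secretly uses regularity of $X$ rather than merely the property $\ko_{iZ}\in\Dp(X)$. The places to inspect are Proposition \ref{prop:catcomp} (that $\Dp[Z](X)$ is the category of compact objects of $\D_Z(\Qcoh(X))$) and Proposition \ref{prop:excat} (that $\Dp[Z](X)=\Db(\cat{E})$ with $\cat{E}$ satisfying (E1)--(E3)); both are proved under the sole geometric hypothesis $\ko_{iZ}\in\Dp(X)$ for all $i>0$, so the argument does carry over to $X=Z$. One may also simply observe that for $X=Z$ this merely recovers, in the present language, the singular-projective case already contained in \cite{LO}.
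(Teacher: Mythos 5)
Your proposal is correct and takes essentially the same route as the paper: the paper's proof is the one-line observation that Corollary \ref{cor:LO1} follows from Example \ref{ex:geosetting} and (the proof of) Theorem \ref{thm:main1}, since in both cases the hypothesis $\ko_{iZ}\in\Dp(X)$ for all $i>0$ holds and the argument of Section \ref{subsec:enh} only uses that condition (together with $T_0(\ko_Z)=0$) rather than smoothness. You simply make explicit the check that Propositions \ref{prop:catcomp}, \ref{prop:amp}, \ref{prop:excat} and Lemmas \ref{lem:uniqueenhance}, \ref{lem:subcatcomp} rely on $\ko_{iZ}\in\Dp(X)$ alone, which is exactly the implicit content of the paper's remark.
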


If $X=Z$, then this is nothing but one of the main results in \cite{LO} (see Theorem 9.9 there).

\subsection{The Fourier--Mukai kernels}\label{subsec:kernel1}

The aim of this subsection is to prove the following result, which is the first part of Theorem \ref{thm:main2}.

\begin{prop}\label{prop:rep1}
Let $X_1$ be a quasi-projective scheme containing a projective
subscheme $Z_1$ such that $\ko_{iZ_1}\in\Dp(X_1)$, for all $i>0$. Assume that $X_2$ is a scheme containing a
closed subscheme $Z_2$.
Then, for any exact functor $\fun{F}\colon\Dp[Z_1](X_1)\to\Dp[Z_2](X_2)$
satisfying $\Cc$ there exist $\ke\in\D_{Z_1\times Z_2}(\Qcoh(X_1\times
X_2))$ and an isomorphism of exact functors $\fun{F}\iso\FMS{\ke}$.
\end{prop}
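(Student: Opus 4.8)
The plan is to assemble the objects built in Sections \ref{subsubsec:dglift}--\ref{subsubsec:kernel}, so that the proof itself is short. First I would record what has been produced from an exact functor $\fun{F}\colon\Dp[Z_1](X_1)\to\Dp[Z_2](X_2)$ satisfying $\Cc$: invoking the uniqueness of the enhancement of $\D_{Z_1}(\Qcoh(X_1))$ from Lemma \ref{lem:uniqueenhance}, together with the extension criterion of Corollary \ref{cor:critiso2} — which applies because $\Cc$ implies $\Ca$ in the exact-category setting of Proposition \ref{prop:excat} (Remark \ref{rmK:2cond}) — one obtains a quasi-functor $\dgfun{F}_2\colon\Dg_{Z_1}(X_1)\to\Dg_{Z_2}(X_2)$ commuting with arbitrary direct sums, an exact functor $\fun{F}_1$ with $\fun{F}_1\iso H^0(\dgfun{F}_1)\iso H^0(\dgfun{F}_2)\rest{\Dp[Z_1](X_1)}$, and the isomorphism of exact functors $\theta\colon\fun{F}\isomor\fun{F}_1$ of \eqref{eqn:LO2}.

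Next I would invoke To\"en's representability theorem \cite[Thm.\ 8.9]{To}. Since the quasi-functor $\dgfun{F}_3\colon\Dg(X_1)\to\Dg(X_2)$ defined by the commutative square \eqref{eqn:comdiaLO} commutes with direct sums (because $\dgfun{F}_2$ does), there is a complex $\ke\in\D(\Qcoh(X_1\times X_2))$ together with an isomorphism $\FM{\ke}\iso H^0(\dgfun{F}_3)$, compatible with the inclusions in the sense that $\iota\comp H^0(\dgfun{F}_2)(\ka)\iso\FM{\ke}\comp\iota(\ka)$ for all $\ka\in\Db_{Z_1}(X_1)$. Then I would apply Lemma \ref{lem:kernel1} to pass from this non-supported kernel to the supported one
\[
\widetilde\ke:=(\iota\times\iota)^!\ke\in\D_{Z_1\times Z_2}(\Qcoh(X_1\times X_2)),
\]
which lies in the supported derived category by the construction of $\iota^!$ recalled in Section \ref{subsec:prelcat}; the lemma gives $H^0(\dgfun{F}_2)\rest{\Dp[Z_1](X_1)}\iso\FMS{\widetilde\ke}\rest{\Dp[Z_1](X_1)}$. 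Composing all of this,
\[
\fun{F}\overset{\theta}{\iso}\fun{F}_1\iso H^0(\dgfun{F}_2)\rest{\Dp[Z_1](X_1)}\iso\FMS{\widetilde\ke}\rest{\Dp[Z_1](X_1)},
\]
which is the assertion, with the kernel taken to be $\widetilde\ke$.

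Since the proposition is essentially a bookkeeping step, the real obstacles lie upstream. The first is to check that the abstract extension machinery of Section \ref{sec:extending} genuinely applies in the supported setting — i.e.\ that $\cat{Amp}(Z_1,X_1,H_1)$ satisfies the hypotheses needed for condition {\rm (E2)} (Proposition \ref{prop:amp}) and that $\Cc$, in particular part $(2)$, is the correct replacement for the full-faithfulness hypothesis used in \cite{LO}. The second, and in my view the main one, is to carry the dg-construction of \cite[Lemma 6.2]{LO} through without full-faithfulness so as to obtain $\dgfun{F}_2$ commuting with direct sums; once that quasi-functor and the isomorphism $\theta$ of \eqref{eqn:LO2} are available, To\"en's theorem and Lemma \ref{lem:kernel1} complete the argument mechanically.
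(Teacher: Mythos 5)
Your proposal matches the paper's own proof: Proposition \ref{prop:rep1} is indeed stated in the paper as a summary of the constructions in Sections \ref{subsubsec:dglift}--\ref{subsubsec:kernel}, namely the dg-lift $\dgfun{F}_2$, the isomorphism $\theta\colon\fun{F}\isomor\fun{F}_1$ obtained from Corollary \ref{cor:critiso2}, To\"en's representability applied to $\dgfun{F}_3$, and Lemma \ref{lem:kernel1} to replace the ambient kernel $\ke$ by $(\iota\times\iota)^!\ke$. You also correctly identify the two non-trivial upstream points (the weakly ample set replacing full faithfulness, and pushing \cite[Lemma 6.2]{LO} through without that hypothesis), which is exactly where the paper's work lies.
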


To construct the Fourier--Mukai kernel realizing $\fun{F}$ as a Fourier--Mukai functor we will make use of some ideas from Sections 4, 6 and 9 of \cite{LO}. Clearly there is no space for reproducing the arguments in \cite{LO} in full details. Thus, for the convenience of the reader, we outline here the main steps in the proof of Proposition \ref{prop:rep1}.

The first passage consists in the construction of a quasi-functor
$\dgfun{F}_1$ which will turn out to be a dg lift of $\fun{F}$. This
is done in Section \ref{subsubsec:dglift} following
\cite[Sect.\ 4]{LO}. Then we will show, in Section
\ref{subsubsec:iso}, the existence of the above mentioned
isomorphism. The first main ingredient at this point is that the dg
enhancement of the category of perfect complexes with support
conditions is constructed out of a set satisfying \eqref{ample2} of Definition \ref{def:almostample}. The second
ingredient consists in showing that $\fun{F}$ and
$\fun{F}_1:=H^0(\dgfun{F}_1)$ are isomorphic on such a
set. For this we need to apply some tricks from
\cite[Sect.\ 6]{LO}. Hence, using that the functor $\fun{F}$ satisfies $\Cc$, the results in Section \ref{sec:extending}
apply, proving that $\fun{F}\iso\fun{F}_1$. Finally, we prove in
Section \ref{subsubsec:kernel} that $\fun{F}_1$ is of Fourier--Mukai
type. This essentially follows, as in \cite[Sect.\ 9]{LO}, from an
application of \cite{To}.

We will discuss the relevant points where one can avoid the fully faithful assumption on $\fun{F}$ which is present in \cite{LO}. Anyway, it is important to observe that the steps in the following proof where such an assumption may be relevant are where we make use of Section 6 in \cite{LO}. On the other hand, only two results from that part of \cite{LO} are used here: Lemmas 6.1 and 6.2. While the first one is absolutely innocuous, one needs to observe that $\Cc$ (together with \cite{CS4}) is enough to apply \cite[Lemma 6.2]{LO}.

\subsubsection{The dg lift}\label{subsubsec:dglift}

We keep the same notation as in Section \ref{subsec:enh}, with $X_1$
and $Z_1$ playing the same roles as $X$ and $Z$ there. Set
$\fun{F}':=\fun{F}\comp\alpha^{-1}$, where $\alpha$ is defined in
\eqref{eqn:alpha}. Since the essential image of $\pi\comp\dYon$ is
contained in $(\dgD(\cat{A})/\cat{L})^c$, we can define the full dg
subcategory $\cat{B}:=\{\fun{F}'(\pi(\dYon[A])):A\in\cat{A}\}$ of
$\Perf_{Z_2}(X_2)$ (see Example \ref{ex:dgcat3}).

Let $\cat{C}$ be the full dg subcategory of $\Perf_{Z_2}(X_2)$ such that $H^0(\cat{C})$ is classically generated by the objects in $\cat{B}$. By definition the functor $\fun{F}'\colon(\dgD(\cat{A})/\cat{L})^c\to\Dp[Z_2](X_2)$ factors through $H^0(\cat{C})$ giving rise to the functor
\[
\cat{A}\mor{\pi\comp\dYon}(\dgD(\cat{A})/\cat{L})^c\mor{\fun{F}'} H^0(\cat{C})\mono\Dp[Z_2](X_2)
\]
which, in turn, factors through a functor $\newrho_1\colon\cat{A}\to H^0(\cat{B})$, which can be viewed as a dg functor in a trivial way.

Consider the dg category $\tau_{\le0}\cat{B}$ with the same objects as
$\cat{B}$ but such that $\Hom_{\tau_{\le0}\cat{B}}(\ke,\kf)=
\tau_{\le0}\Hom_{\cat{B}}(\ke,\kf)$ (here $\tau_{\le0}$ is the gentle
truncation). Let $p\colon\tau_{\le0}\cat{B}\to H^0(\cat{B})$ and
$l\colon\tau_{\le0}\cat{B}\to\cat{B}$ be the natural dg functors. Due
to Lemma \ref{lem:uniqueenhance} and part (1) of $\Cc$, $p$ is a quasi-equivalence. Thus, we get the quasi-functor
\[
\newrho_2\colon\SF(\cat{A})\mor{}\SF(H^0(\cat{B}))\to\SF(\tau_{\leq 0}\cat{B})\mor{}\SF(\cat{B}),
\]
where $\newrho_2=l^*\comp(p^*)^{-1}\comp\newrho_1^*$ and $(p^*)^{-1}$ is the inverse in $\Hqe$ of $p^*$.

Since $\cat{L}=H^0(\cat{L}')$ is compactly generated by \cite{CS4}, the argument in \cite[Lemma 6.2]{LO} applies and the quasi-functor $\newrho_2$ factors through the dg quotient $\SF(\cat{A})/(\SF(\cat{A})\cap\cat{L}')$. Hence we get a quasi-functor
\[
\newrho_3\colon\SF(\cat{A})/(\SF(\cat{A})\cap\cat{L}')\mor{}\SF(\cat{B}).
\]
It is important to stress here that, in the proof of \cite[Lemma
  6.2]{LO}, the assumption that $\fun{F}$ is fully faithful is not
needed and $\Cc$ suffices to conclude. This is because the proof
relies on \cite[Prop.\ 3.4]{LO}, where part (1) of $\Cc$ is enough.

The quasi-functor $\newrho_3$ restricts to a quasi-functor $\newrho_3\colon\cat{D}\to\Perf(\cat{B})$. Indeed, it is enough to observe that by definition $H^0(\newrho_3)(\pi(\dYon[A]))\in H^0(\Perf(\cat{B}))$, for all $A\in\cat{A}$. Now, \cite[Prop.\ 1.16]{LO} applies and gives a quasi-equivalence $\psi\colon\cat{C}\to\Perf(\cat{B})$. Take the quasi-functor
\begin{equation}\label{eqn:LO1}
\newrho_4\colon\cat{D}\mor{\newrho_3}\Perf(\cat{B})\mor{\psi^{-1}}\cat{C}\mono\Perf_{Z_2}(X_2),
\end{equation}
where, as usual, $\psi^{-1}$ is the inverse in $\Hqe$ of the
quasi-equivalence $\psi$.

We apply Lemma \ref{lem:lift} to the enhancement $\Perf_{Z_1}(X_1)$ of $\Dp[Z_1](X_1)$ getting a quasi-functor $\delta\colon\cat{D}\to\Perf_{Z_1}(X_1)$,
whence
\[
\dgfun{F}_1:=\newrho_4\comp\delta^{-1}\colon\Perf_{Z_1}(X_1)\lto\Perf_{Z_2}(X_2).
\]

\subsubsection{The isomorphism}\label{subsubsec:iso}

Consider the exact functor
\[
H^0(\newrho_4)\colon(\dgD(\cat{A})/\cat{L})^c(=H^0(\cat{D}))\lto\Dp[Z_2](X_2).
\]
As a consequence of \cite[Lemma 6.1]{LO} (see also \cite[Prop.\ 3.4]{LO}) and of the definition in \eqref{eqn:LO1}, we get an isomorphism
\[
\theta\colon\fun{F}'\comp\pi\comp\dYon\isomor H^0(\newrho_4)\comp\pi\comp\dYon,
\]
as functors from $\cat{A}$ to $\Dp[Z_2](X_2)$. Again, it is important to observe that we do not need to have $\fun{F}$ (and thus $\fun{F}'$) fully faithful to apply \cite[Lemma 6.1]{LO} being the isomorphism above a simple consequence of the definition of $\newrho_4$.

From the isomorphisms \eqref{eqn:isouniq1} and $\alpha^{-1}\comp\pi\comp\dYon\iso \id_{\cat{A}}$ (see Lemma \ref{lem:uniqueenhance}), $\theta$ gives an isomorphism
\[
\fun{F}\rest{\cat{A}}\isomor\fun{F}_1\rest{\cat{A}},
\]
where $\fun{F}_1:=H^0(\dgfun{F}_1)$. Applying Corollary \ref{cor:critiso2} we get an isomorphism of exact functors
\begin{equation}\label{eqn:LO2}
\fun{F}\isomor\fun{F}_1.
\end{equation}

\subsubsection{The kernel}\label{subsubsec:kernel}

By \cite[Prop.\ 1.17]{LO} (which can be applied here in view of
Proposition \ref{prop:catcomp}), we get a quasi-equivalence $\varphi_i\colon\SF(\Perf_{Z_i}(X_i))\isomor\Dg_{Z_i}(X_i)$. Hence, composing the extension of $\dgfun{F}_1$ to semi-free modules with $\varphi_i$, we get a quasi-functor
\[
\dgfun{F}_2\colon\Dg_{Z_1}(X_1)\mor{\varphi_1^{-1}}\SF(\Perf_{Z_1}(X_1))\to\SF(\Perf_{Z_2}(X_2))\mor{\varphi_2}\Dg_{Z_2}(X_2)
\]
whose $H^0$ commutes with direct sums (because it has a right adjoint,
according to \cite[Sect.\ 1]{LO}). Observe that
\begin{equation}\label{eqn:Fi}
\fun{F}_1\iso H^0(\dgfun{F}_2)\rest{\Dp[Z_1](X_1)}.
\end{equation}

The easier case $X_i=Z_i$, for $i=1,2$, generalizing
\cite[Cor.\ 9.13]{LO} (see, in particular, parts (2) and (3) there)
can be treated already.

\begin{cor}\label{cor:experf}
Let $X_1$ be a projective variety and let $X_2$ be a scheme. For any
exact functor $\fun{F}\colon\Dp(X_1)\to\Dp(X_2)$ satisfying $\Cc$,
there exist $\ke\in\Db(X_1\times X_2)$ and an isomorphism of exact
functors $\fun{F}\iso\FM{\ke}$.
\end{cor}

\begin{proof}
By \cite[Thm.\ 8.9]{To} there is $\ke\in\D(\Qcoh(X_1\times X_2))$ such that $\FM{\ke}\iso H^0(\dgfun{F}_2)$.
As $\fun{F}_1\iso H^0(\dgfun{F}_1)\iso H^0(\dgfun{F}_2)\rest{\Dp[](X_1)}$, the isomorphism \eqref{eqn:LO2} gives $\fun{F}\iso\FM{\ke}$. The fact that $\ke$ is bounded coherent is obtained by the same argument as in the proof of \cite[Cor.\ 9.13]{LO}, part (4). We do not explain this here as this is a special instance of Lemma \ref{lem:bound}.
\end{proof}

Back to the general setting, we can observe the following.

\begin{remark}
Assume that $X$ is a quasi-projective scheme containing a projective subscheme $Z$ such that $\ko_{iZ}\in\Dp(X)$, for all $i>0$. The exact functors $\iota\colon\D_Z(\Qcoh(X))\lto\D(\Qcoh(X))$ and
$\iota^!\colon\D(\Qcoh(X))\to\D_Z(\Qcoh(X))$, defined in Section
\ref{subsec:prelcat}, have natural dg lifts (denoted with the same
symbols) $\iota\colon\Dg_Z(X)\lto\Dg(X)$ and
$\iota^!\colon\Dg(X)\to\Dg_Z(X)$.

To show this one can use \cite[Sect.\ 4]{KL}. More precisely, the presence of the right adjoint $\iota^!$ yields a semi-orthogonal decomposition of $\D(\Qcoh(X))$, where $\D_{Z}(\Qcoh(X))$ is a non-trivial piece. By \cite[Prop.\ 4.10]{KL}, such a decomposition can be written at the dg level as a gluing with one piece given by $\Dg_Z(X)$, up to quasi-equivalence.
Hence \cite[Lemma 4.4]{KL} combined with the uniqueness of the
enhancements of $\D(\Qcoh(X))$ (see \cite[Cor.\ 2.11]{LO}) and of
$\D_{Z}(\Qcoh(X))$ (see Lemma \ref{lem:uniqueenhance}) provide the dg lifts for the functors $\iota$ and $\iota^!$. By the construction, it is clear that the dg lifts of $\iota$ and $\iota^!$ are not, in general, dg functors but just quasi-functors.
\end{remark}

Under the light of the above remark, consider the quasi-functor $\dgfun{F}_3$ making the following diagram commutative
\begin{equation}\label{eqn:comdiaLO}
\xymatrix{
\Dg(X_1)\ar[rr]^-{\dgfun{F}_3}\ar[d]_{\iota_1^!}&&\Dg(X_2)\\
\Dg_{Z_1}(X_1)\ar[rr]^-{\dgfun{F}_2}&&\Dg_{Z_2}(X_2).\ar[u]_{\iota_2}
}
\end{equation}
Clearly $H^0(\dgfun{F}_3)$ commutes with direct sums, as the same is
true for $\iota_1^!$ (by Lemma \ref{lem:sums}), $H^0(\dgfun{F}_2)$ and
$\iota_2$. Notice also that, if $Z_1=X_1$, then $\iota_1^!\iso\id$.

By \cite[Thm.\ 8.9]{To}, there exist $\widetilde{\ke}\in\D(\Qcoh(X_1\times X_2))$ and an isomorphism of exact functors
\[
\FM{\widetilde{\ke}}\iso H^0(\dgfun{F}_3)\colon\D(\Qcoh(X_1))\mor{\iota_1^!}\D_{Z_1}(\Qcoh(X_1))\mor{H^0(\dgfun{F}_2)}\D_{Z_2}(\Qcoh(X_2))\mor{\iota_2}\D(\Qcoh(X_2)).
\]
It follows that, setting $\ke:=\iota_{1,2}^!(\widetilde{\ke})\in
\D_{Z_1\times Z_2}(\Qcoh(X_1\times X_2))$, we have
\[
H^0(\dgfun{F}_2)\iso
\iota_2^!\comp\iota_2\comp H^0(\dgfun{F}_2)\comp\iota_1^!\comp\iota_1
\iso\iota_2^!\comp\FM{\widetilde{\ke}}\comp\iota_1\iso\FMS{\ke}
\]
by Lemma \ref{lem:FMS}.

\begin{remark}\label{rmk:Toenrevis}
	The careful reader may notice that To\"{e}n's result \cite[Thm.\ 8.9]{To} was originally proved by using different enhancements for $\D(\Qcoh(X_1))$ and $\D(\Qcoh(X_2))$. That the same statement holds true for $\Dg(X_1)$ and $\Dg(X_2)$ is observed in \cite[Thm.\ 4.9]{K}.
\end{remark}

Putting this together with the isomorphisms \eqref{eqn:LO2} and
\eqref{eqn:Fi}, we have proved Proposition \ref{prop:rep1}. By Example
\ref{ex:geosetting}, the following consequence of Proposition
\ref{prop:rep1} is immediate.

\begin{cor}\label{cor:LO2}
Let $X_1$ be a quasi-projective scheme containing a projective subscheme $Z_1$ such that either $X_1$ is smooth or $X_1=Z_1$. Assume that $X_2$ is a scheme containing a closed subscheme $Z_2$.
Then, for any exact functor $\fun{F}\colon\Dp[Z_1](X_1)\to\Dp[Z_2](X_2)$
satisfying $\Cc$ there exist $\ke\in\D_{Z_1\times Z_2}(\Qcoh(X_1\times
X_2))$ and an isomorphism of exact functors $\fun{F}\iso\FMS{\ke}$.
\end{cor}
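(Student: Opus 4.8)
The plan is to obtain this statement as an immediate specialization of Proposition \ref{prop:rep1}. Indeed, the two results have the same conclusion --- the existence of a kernel $\ke\in\D_{Z_1\times Z_2}(\Qcoh(X_1\times X_2))$ together with an isomorphism of exact functors $\fun{F}\iso\FMS{\ke}$ --- and impose the same conditions on the target pair $(X_2,Z_2)$ and on $\fun{F}$ (exactness plus property $\Cc$). The only point to address is that the hypothesis ``$\ko_{iZ_1}\in\Dp(X_1)$ for all $i>0$'' of Proposition \ref{prop:rep1} is implied here by the assumption that $X_1$ is either smooth or equal to $Z_1$.

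First I would record this implication, which is precisely the content of Example \ref{ex:geosetting}. If $X_1=Z_1$, then each infinitesimal neighbourhood $iZ_1$ of $Z_1$ in $X_1$ equals $X_1$, so $\ko_{iZ_1}=\ko_{X_1}$ is perfect. If instead $X_1$ is smooth (hence regular) and quasi-projective, then $\Dp(X_1)=\Db(X_1)$ as recalled in Section \ref{subsec:prelcat}, and therefore the coherent sheaf $\ko_{iZ_1}$, viewed as a complex concentrated in degree $0$, lies in $\Dp(X_1)$. In either case the hypotheses of Proposition \ref{prop:rep1} are met.

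Second, with this verification in hand, I would simply invoke Proposition \ref{prop:rep1}: applied to the given exact functor $\fun{F}\colon\Dp[Z_1](X_1)\to\Dp[Z_2](X_2)$ satisfying $\Cc$, it produces the desired $\ke$ and the isomorphism $\fun{F}\iso\FMS{\ke}$, which is exactly the assertion.

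I do not anticipate any genuine obstacle: the statement is a reformulation of Proposition \ref{prop:rep1} in which the technical requirement on the perfectness of the infinitesimal neighbourhoods is replaced by two transparent geometric situations where it holds automatically. All the substantive work has already been carried out --- the construction of the dg-lift and of the kernel $\ke$ in Section \ref{subsec:kernel1}, and the extension of the isomorphism from the weakly ample set $\cat{Amp}(Z_1,X_1,H_1)$ to all of $\Dp[Z_1](X_1)$ via Corollary \ref{cor:critiso2}, using that $\Cc$ implies $\Ca$ as noted in Remark \ref{rmK:2cond}.
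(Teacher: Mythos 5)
Your proposal is correct and matches the paper's own argument: the paper simply observes that by Example \ref{ex:geosetting} the hypothesis $\ko_{iZ_1}\in\Dp(X_1)$ for all $i>0$ holds when $X_1$ is smooth or $X_1=Z_1$, and then applies Proposition \ref{prop:rep1}. Your justifications for the two cases (perfectness of $\ko_{X_1}$ when $Z_1=X_1$, and $\Dp(X_1)=\Db(X_1)$ when $X_1$ is regular) are exactly what is implicit in that example.
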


\subsection{The category generated by a spherical object}\label{subsec:spherical}

Let us start with a detour about the derived category of a smooth quasi-projective curve with support condition on a closed point for which we can prove variants of Theorems \ref{thm:main2} and \ref{thm:main1}. Notice that in this case it is not true that the
maximal $0$-dimensional torsion subsheaf of $\ko_Z$ is trivial. In
particular, it is easy to see that the construction in Section
\ref{subsec:almostample} does not provide an almost ample set. Thus we
need a particular treatment that, unfortunately, works only for points
embedded in curves.

\begin{prop}\label{prop:spherical}
Let $\p$ be a closed point in a smooth quasi-projective curve $C$.

{\rm (i)}  Let $X$ be a scheme with a closed subscheme
$Z$ and let
\[
\fun{F}\colon\Db_{\p}(C)\lto\Dp[Z](X)
\]
be an exact functor such that
\begin{equation}\label{eqn:FMp}
\Hom_{\Dp[Z](X)}(\fun{F}(\ka),\fun{F}(\kb)[k])=0,
\end{equation}
for all $\ka,\kb\in\Coh_{\p}(C)$ and all $k<0$. Then there exist
$\ke\in\Db_{\{\p\}\times Z}(\Qcoh(C\times X))$ and an isomorphism of
exact functors $\fun{F}\iso\FMS{\ke}$.

{\rm (ii)} The triangulated category $\Db_\p(C)$ has a strongly unique enhancement.
\end{prop}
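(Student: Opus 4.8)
The plan is to run the proofs of Proposition~\ref{prop:rep1} and of Theorem~\ref{thm:main1}, isolating the unique point at which the hypothesis $T_0(\ko_Z)=0$ (that is, the existence of a genuine weakly ample set) enters, and replacing it by a direct argument that uses the very special structure of $\Coh_\p(C)$. Since $\ko_{C,\p}$ is a discrete valuation ring, $\Coh_\p(C)$ is the category of finite length $\ko_{C,\p}$-modules: by the structure theorem it is the closure under finite direct sums of $\cat{Amp}(\p,C,H)=\{\ko_{\abs{i}\p}(jH)\}\iso\{\ko_{n\p}\}_{n\ge1}$ (a line bundle restricts trivially to the Artinian scheme $n\p$), and it is hereditary. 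Consequently every object of $\Db_\p(C)=\Dp[\p](C)\iso\Db(\Coh_\p(C))$ is a finite direct sum of shifts of its cohomology sheaves.

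The key step is the following substitute for Corollary~\ref{cor:critiso2}: if $\fun{G}_1,\fun{G}_2\colon\Db_\p(C)\to\cat{T}$ are exact functors such that $\Hom(\fun{G}_1(\ka),\fun{G}_1(\kb)[k])=0$ for all $\ka,\kb\in\Coh_\p(C)$ and all $k<0$, and $f\colon\fun{G}_1\rest{\cat{Amp}(\p,C,H)}\isomor\fun{G}_2\rest{\cat{Amp}(\p,C,H)}$ is an isomorphism, then $f$ extends to an isomorphism $\fun{G}_1\isomor\fun{G}_2$. We are in the setting of Section~\ref{sec:extending} with both $\cat{E}$ and the ambient abelian category equal to $\Coh_\p(C)$: then {\rm (E1)} is trivial, {\rm (E2)} (i.e.\ \eqref{ample1} and \eqref{ample2} for $\cat{Amp}(\p,C,H)$) holds by Proposition~\ref{prop:amp} — the hypothesis $T_0=0$ being needed there only for \eqref{ample3} — and {\rm (E3)} holds with $N(A)=1$ because $\Coh_\p(C)$ is hereditary. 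Since the assumed vanishing is precisely condition $(1)$ of $\Ca$ for $\fun{G}_1$, Corollary~\ref{cor:critiso1} yields a unique isomorphism compatible with shifts $f_0\colon\fun{G}_1\rest{\cat{D}_0}\isomor\fun{G}_2\rest{\cat{D}_0}$ extending $f$, where $\cat{D}_0$ is the full subcategory of $\Db_\p(C)$ of objects isomorphic to shifts of sheaves in $\Coh_\p(C)$. Now every object of $\Db_\p(C)$ is a finite direct sum of objects of $\cat{D}_0$, and, $\cat{D}_0$ being full, every morphism of $\Db_\p(C)$ is a matrix of morphisms in $\cat{D}_0$; hence $f_0$ extends uniquely, by additivity, to an isomorphism $\fun{G}_1\isomor\fun{G}_2$. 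Note that condition $(2)$ of $\Ca$ — which fails for $\Db_\p(C)$ — is never invoked: the inductive step of Proposition~\ref{prop:extending1} is here replaced by the splitting of bounded complexes over the hereditary category $\Coh_\p(C)$.

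Granting this, part (i) follows by repeating the proof of Proposition~\ref{prop:rep1}, with $\cat{A}:=\cat{Amp}(\p,C,H)$. Since $C$ is smooth, $\ko_{n\p}\in\Dp(C)$ for all $n$, so Lemma~\ref{lem:uniqueenhance} applies: $\D_\p(\Qcoh(C))$ has a unique enhancement and there is an equivalence $\varphi\colon\D_\p(\Qcoh(C))\isomor\Dg(\cat{A})/\cat{L}$. The dg-lift construction of Section~\ref{subsubsec:dglift} uses only this uniqueness together with condition $(1)$ of $\Cc$, which here is exactly \eqref{eqn:FMp}; it produces a direct-sum-preserving quasi-functor $\dgfun{F}_2\colon\Dg_\p(C)\to\Dg_Z(X)$, an exact functor $\fun{F}_1:=H^0(\dgfun{F}_2)\rest{\Db_\p(C)}$, and an isomorphism $\fun{F}\rest{\cat{A}}\isomor\fun{F}_1\rest{\cat{A}}$. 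Applying the step above with $\fun{G}_1=\fun{F}$ (which satisfies the required vanishing by \eqref{eqn:FMp}) and $\fun{G}_2=\fun{F}_1$, in place of the appeal to Corollary~\ref{cor:critiso2} in Section~\ref{subsubsec:iso}, gives $\fun{F}\iso\fun{F}_1$. Finally, To\"en's representability \cite[Thm.\ 8.9]{To} applied to the induced quasi-functor $\dgfun{F}_3$ of \eqref{eqn:comdiaLO}, together with Lemma~\ref{lem:kernel1}, produces $\widetilde\ke=(\iota\times\iota)^!\ke\in\D_{\{\p\}\times Z}(\Qcoh(C\times X))$ with $\fun{F}\iso\FMS{\widetilde\ke}$; boundedness of $\widetilde\ke$ is obtained as in the deduction of Theorem~\ref{thm:main2} from Proposition~\ref{prop:rep1} (cf.\ the proof of Corollary~\ref{cor:experf} and \cite[Cor.\ 9.13(4)]{LO}).

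Part (ii) is proved by repeating the argument of Section~\ref{subsec:enh} with $X=C$ and $Z=\{\p\}$. All its inputs are available: Proposition~\ref{prop:excat} gives $\Dp[\p](C)\iso\Db(\Coh_\p(C))$, Lemma~\ref{lem:uniqueenhance} gives the equivalence $\varphi$ and the uniqueness of the enhancement of $\D_\p(\Qcoh(C))$, and the proof of Lemma~\ref{lem:subcatcomp} is formal and does not use the hypothesis $T_0(\ko_Z)=0$, so $\cat{L}^c=\cat{L}\cap\Dg(\cat{A})^c$ still holds. Again, the only point where $T_0=0$ entered is the final appeal to Corollary~\ref{cor:critiso2} to obtain $\gamma^{-1}\iso H^0(\delta)$; but the functors involved there, $\fun{F}_1=\gamma^{-1}\comp\alpha$ and $\fun{F}_2=H^0(\delta)\comp\alpha$, are equivalences, hence full, hence satisfy $\Hom(\fun{F}_i(\ka),\fun{F}_i(\kb)[k])=0$ for $\ka,\kb\in\Coh_\p(C)$ and $k<0$, so the step above applies to the isomorphism $\fun{F}_1\rest{\cat{A}}\iso\fun{F}_2\rest{\cat{A}}$ and gives $\gamma^{-1}\iso H^0(\delta)$, whence strong uniqueness as at the end of Section~\ref{subsec:enh}. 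The main obstacle is the verification underlying the second paragraph: with $\cat{Amp}(\p,C,H)$ only an additive generating set and not a weakly ample set, one must check that every use of \eqref{ample2} and of condition $(2)$ of $\Ca$ in Sections~\ref{sec:extending} and \ref{subsec:enh} is either vacuous or avoidable — which rests precisely on $\Coh_\p(C)$ being the additive closure of $\cat{Amp}(\p,C,H)$ and hereditary.
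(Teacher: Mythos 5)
Your proposal is correct and takes essentially the same route as the paper's proof: both use Corollary \ref{cor:critiso1} (rather than \ref{cor:critiso2}) to extend the isomorphism of functors from $\cat{Amp}(\p,C,H)=\{\ko_{n\p}\}$ to the subcategory $\cat{D}_0$ of shifts of sheaves, and then exploit the fact that over the smooth curve $C$ every object of $\Db_\p(C)$ splits as a finite direct sum of objects of $\cat{D}_0$, so the extension to the whole category is forced by additivity; for (ii) both likewise re-run the argument of Section \ref{subsec:enh} and replace the final appeal to Corollary \ref{cor:critiso2}. You merely make explicit some checks the paper leaves implicit (the verification of (E1)--(E3) with $\cat{E}=\cat{A}=\Coh_\p(C)$, and the matrix/well-definedness argument for the additive extension), and for boundedness of the kernel the paper actually cites Lemma \ref{lem:boundedcohom} rather than Corollary \ref{cor:experf}, but the content is the same.
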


\begin{proof}
By Proposition \ref{prop:amp} the subcategory $\cat{C}$ of $\Coh_{\p}(C)$, whose objects are
$\{\ko_{n\p}:n>0\}$, satisfies property \eqref{ample2} in Definition
\ref{def:almostample}. In particular, looking carefully at the
construction in Section \ref{subsec:kernel1}, this together with
\eqref{eqn:FMp} is enough to provide an $\ke\in\Db_{\{\p\}\times
  Z_2}(\Qcoh(C\times X))$ and an isomorphism
\[
\theta\colon\fun{F}\rest{\cat{C}}\isomor\FMS{\ke}\rest{\cat{C}}.
\]
To be precise, the fact that $\ke$ is a bounded complex is a
consequence of Lemma \ref{lem:boundedcohom} below.

Set $\cat{D}_0$ to be the (strictly) full subcategory of $\Db_\p(X)$
whose objects are isomorphic to shifts of objects of
$\Coh_{\p}(X)$. By Corollary \ref{cor:critiso1}, the isomorphism
$\theta$ extends (uniquely) to an isomorphism compatible with shifts
\[
\theta_0\colon\fun{F}\rest{\cat{D}_0}\isomor\FMS{\ke}\rest{\cat{D}_0}.
\]
Being $C$ a smooth curve, any object $\kf\in\Db_\p(C)$ can be written
(in an essentially unique way) as a finite direct sum of objects of
$\cat{D}_0$. Thus $\theta_0$ extends (uniquely) to the desired
isomorphism $\fun{F}\isomor\FMS{\ke}$, proving (i).

As for (ii), the same line of reasoning as in the proof of Theorem \ref{thm:main1} in Section \ref{subsec:enh} works with the only difference that the extension of the isomorphism
\eqref{eqn:isoimp} takes place due to Corollary \ref{cor:critiso1}
instead of Corollary \ref{cor:critiso2}. Then we argue as in the
last part of the proof of (i), i.e.\ using that any object in $\Db_\p(C)$ can be (uniquely) written
as a finite direct sum of objects of $\cat{D}_0$.
\end{proof}

Obviously, if $C$ is a smooth quasi-projective curve, the Serre functor of $\Db_\p(C)$ is the shift by one and so $\Db_\p(C)$ is
a $1$-Calabi--Yau category. Moreover, it is easy to verify that the skyscraper sheaf $\ko_\p\in\Coh_\p(C)$ is a $1$-spherical object when $\p$ is a $\K$-rational point. Recall that, as we
mentioned in the introduction, an object $S$ in a triangulated category $\cat{T}$ is \emph{$d$-spherical} (where $d$ is a positive integer) if $\Hom_\cat{T}(S,S[i])$
is trivial if $i\neq 0,d$, while it is isomorphic to $\K$
otherwise.

\begin{remark}\label{rmk:equiv}
Let $\cat{T}$ be an idempotent complete algebraic triangulated category (see \cite{K} for its definition) and let $S\in\cat{T}$ be a $d$-spherical object. In this case, up to equivalence, the full triangulated subcategory $\cat{T}_S$ of $\cat{T}$ classically generated by $S$ does not depend neither on the triangulated category $\cat{T}$ nor on the $d$-spherical object $S$ (see \cite[Thm.\ 2.1]{KYZ}).
\end{remark}

To prove Proposition \ref{prop:sph}, take $S$ to be a $1$-spherical object in an an idempotent complete algebraic triangulated category $\cat{T}$. If $C=\mathbb{A}^1$ and $\p$ is the origin, we observed before that $\ko_\p$ is a $1$-spherical object and hence, by Remark \ref{rmk:equiv}, there exists an exact equivalence $\cat{T}_S\iso\Db_\p(C)$. Now it is enough to apply Proposition \ref{prop:spherical}.

\section{Uniqueness of Fourier--Mukai kernels}\label{sect:uniqueness}

For functors satisfying $\Cc$ and with $X_1$ and $X_2$ smooth, the
uniqueness of Fourier--Mukai kernels is proved via a direct
computation in Section \ref{subsect:supportunique}. As a preliminary
step, we study some basic properties of Fourier--Mukai functors in the
supported setting. In particular, in Lemma \ref{lem:boundedcohom} we show
that the Fourier--Mukai kernels have bounded cohomology. We also make
clear that, in general, one cannot expect the kernel to have coherent
cohomology.

\subsection{Basic properties}\label{subsec:kernel2}

Let $X_1$ and $X_2$ be schemes containing closed subschemes $Z_1$ and $Z_2$. As explained in the following example, we cannot
expect that in general the Fourier--Mukai kernel $\ke$ of a functor
$\FM{\ke}\colon\Dp[Z_1](X_1)\to\Dp[Z_2](X_2)$ has (bounded and)
coherent cohomology when $Z_i\neq X_i$.

\begin{ex}\label{ex:nocoherent}
Suppose that there exists $\ke\in\Db_{Z_1\times Z_1}(X_1\times X_1)$
such that
\[
\FMS{\ke}\iso\id\colon\Dp[Z_1](X_1)\to\Dp[Z_1](X_1).
\]
By \cite[Lemma 7.40]{R}, there exist $n>0$ and $\ke_n\in\Db(nZ_1\times
nZ_1)$ such that $\iota_{1,1}(\ke)\iso(i_n\times i_n)_*(\ke_n)$, where
$i_n\colon nZ_1\to X_1$ is the embedding. For any $\kf_n\in\Db(nZ_1)$,
we have
\begin{equation}\label{eqn:split1}
(i_n)_*(\kf_n)\iso\FM{\iota_{1,1}(\ke)}((i_n)_*(\kf_n))\iso
(i_n)_*\comp\FM{\ke_n}(i_n^*\comp(i_n)_*(\kf_n)).
\end{equation}

Take now $X_1=\PP^k$, $Z_1=\PP^{k-1}$ and $\kf_n:=\ko_{nZ_1}(m)$, for $m\in\ZZ$. An easy calculation shows that $i_n^*\comp(i_n)_*(\kf_n)\iso\ko_{nZ_1}(m)\oplus\ko_{nZ_1}(m-n)[1]$. Hence to have \eqref{eqn:split1} verified, we should have either $\FM{\ke_n}(\ko_{nZ_1}(m))=0$ or $\FM{\ke_n}(\ko_{nZ_1}(m-n))=0$. But the following isomorphisms should hold at the same time
\[
\begin{split}
&\FM{\ke_n}(\ko_{nZ_1}(m))\oplus\FM{\ke_n}(\ko_{nZ_1}(m-n))[1]\iso\ko_{nZ_1}(m),\\
&\FM{\ke_n}(\ko_{nZ_1}(m+n))\oplus\FM{\ke_n}(\ko_{nZ_1}(m))[1]\iso\ko_{nZ_1}(m+n).
\end{split}
\]
If $\FM{\ke_n}(\ko_{nZ_1}(m-n))=0$, then from the second one we would have that $\ko_{nZ_1}(m)[1]$ is a direct summand of $\ko_{nZ_1}(m+n)$ which is absurd. Thus $\FM{\ke_n}(\ko_{nZ_1}(m))=0$. As this holds for all $m\in\ZZ$, we get a contradiction.
\end{ex}

On the other hand, it is easy to find a kernel of the identity
functor. Indeed, denoting by $\Delta\colon X_1\to X_1\times X_1$ the
diagonal embedding, setting $\ko_{\Delta}:=\Delta_*(\ko_{X_1})$, and
defining
\begin{equation}\label{eqn:kerid}
\ki:=\iota_{1,1}^!(\ko_{\Delta})\in\Db_{Z_1\times Z_1}(\Qcoh(X_1\times X_1))
\end{equation}
(notice that $\ki$ has bounded cohomologies by Lemma \ref{lem:sums}),
we have the following result.

\begin{lem}\label{lem:identity}
There exists an isomorphism of exact functors
\[
\id\iso\FMS{\ki}\colon\Db_{Z_1}(\Qcoh(X_1))\to\Db_{Z_1}(\Qcoh(X_1)).
\]
\end{lem}

\begin{proof}
Indeed, $\FMS{\ki}\iso\iota_1^!\comp\FM{\ko_{\Delta}}\comp\iota_1$ by
Lemma \ref{lem:FMS}. This is enough to conclude, since
$\FM{\ko_{\Delta}}\iso\id$.
\end{proof}

The following result proves, in particular, that in Theorem
\ref{thm:main2} the kernel $\ke$ is actually in $\Db_{Z_1\times
  Z_2}(\Qcoh(X_1\times X_2))$.

\begin{lem}\label{lem:boundedcohom}
If $\ke\in\D_{Z_1\times Z_2}(\Qcoh(X_1\times X_2))$ is such that $\FMS{\ke}(\Dp[Z_1](X_1))\subseteq\Dp[Z_2](X_2)$, then $\ke\in\Db_{Z_1\times Z_2}(\Qcoh(X_1\times X_2))$. Moreover, if $X_1=Z_1$, then $\ke\in\Db_{X_1\times Z_2}(X_1\times X_2)$.
\end{lem}

\begin{proof}
By \cite[Thm.\ 6.8]{R}, for $i=1,2$, the category $\D_{Z_i}(\Qcoh(X_i))$ has a compact generator $G_i\in\Dp[Z_i](X_i)$ (see \cite{BB} for the case without support conditions). Moreover, by the explicit description of the compact generator in the proof of \cite[Thm.\ 6.8]{R}, one sees that $G_1\boxtimes G_2$ is a compact generator of $\D_{Z_1\times Z_2}(\Qcoh(X_1\times X_2))$ (for the non-supported case, see for example \cite[Lemma 3.4.1]{BB} and \cite[Thm.\ 3.7]{LP}).

By \cite[Prop.\ 6.9]{R}, the kernel $\ke$ has bounded cohomology if and only if there exists an interval $[a,b]\subset\RR$ such that $\Hom(G_1\boxtimes G_2,\ke[k])=0$, for any $k\not\in[a,b]$. But now
\[
\Hom(G_1\boxtimes G_2,\ke[k])\iso\Hom(\iota_1(G_1)\boxtimes\iota_2(G_2),\iota_{1,2}(\ke)[k])\iso\Hom(G_2,\FMS{\ke}(G_1\dual)[k])
\]
which is non-trivial only for finitely many $k\in\ZZ$.

Suppose that $Z_1=X_1$. Then $\ke\in\Db_{X_1\times Z_2}(X_1\times X_2)$
if and only if $\widetilde{\ke}:=\iota_{1,2}(\ke)\in\Db(X_1\times X_2)$. Since
$\iota_2\comp\FMS{\ke}\iso\FM{\widetilde{\ke}}\comp\iota_1\iso\FM{\widetilde{\ke}}$, the
functor $\FM{\widetilde{\ke}}$ sends perfect complexes to perfect
complexes. Hence we can assume, without
loss of generality, that $Z_i=X_i$, for $i=1,2$. Then it follows from
\cite[Cor.\ 9.13 (4)]{LO}, where the assumption that the functor is fully faithful is not used (see also \cite[Lemma 4.1]{CS1}), that
$\widetilde{\ke}\in\Db(X_1\times X_2)$.
\end{proof}

We recall that a functor
\[
\fun{F}\colon\Db_{Z_1}(\Qcoh(X_1))\lto\Db_{Z_2}(\Qcoh(X_2))
\]
is \emph{bounded} if there is an interval $[a,b]\subset\RR$ such that,
for any $\ka\in\Qcoh_{Z_1}(X_1)$, we have that if
$H^i(\fun{F}(\ka))\neq 0$, then $i\in[a,b]$. We will need the
following lemma.

\begin{lem}\label{lem:bound}
Assume that the base field $\K$ is perfect. Then every exact functor $\fun{F}\colon\Db_{Z_1}(X_1)\to\Db_{Z_2}(\Qcoh(X_2))$ or $\fun{G}\colon\Db_{Z_1}(\Qcoh(X_1))\to\Db_{Z_2}(\Qcoh(X_2))$, commuting with arbitrary direct sums, is bounded.
\end{lem}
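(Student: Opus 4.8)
The plan is to reduce the boundedness of $\fun{F}$ (and of $\fun{G}$) to the fact that $\Db_{Z_1}(X_1)$, respectively $\Db_{Z_1}(\Qcoh(X_1))$, is generated, in a suitable bounded sense, by pushforwards from the closed subscheme $Z_1$ under $i_*$, and then to observe that an exact functor automatically sends a bounded set of generators to a bounded set of objects. More precisely, the first step is to recall from Proposition \ref{prop:Ball1} that the image of $\Db(Z_1)$ under $i_*$ classically generates $\Db_{Z_1}(X_1)$, and that the image of $\D(\Qcoh(Z_1))$ under $i_*$ classically completely generates $\D_{Z_1}(\Qcoh(X_1))$; together with Proposition \ref{prop:Ball2} this lets us work with complexes of sheaves genuinely supported on $Z_1$, where we have an honest closed embedding $i\colon Z_1\hookrightarrow X_1$ with $Z_1$ proper over $\K$.

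The second step is the actual boundedness estimate. Since $\K$ is perfect and $Z_1$ is of finite type, one can stratify $Z_1$ into regular locally closed pieces (or argue by Noetherian induction on $\mathrm{Supp}$), so that every coherent sheaf on $Z_1$ admits a finite filtration whose graded pieces are pushforwards of locally free sheaves from regular subvarieties; dually, every quasi-coherent sheaf is a filtered colimit of such. Applying $i_*$ and then $\fun{F}$, and using that $\fun{F}$ is exact and commutes with the relevant colimits (in the $\fun{G}$ case), it suffices to bound the cohomological amplitude of $\fun{F}$ on the finitely many "standard" generators $i_*(j_*\ko_W(\ell))$, where $W$ runs over a finite set of regular strata and $j\colon W\to Z_1$ is the inclusion — but a single object has bounded cohomology, and a finite collection of objects has uniformly bounded cohomology. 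Twisting by powers of an ample line bundle on $X_1$ only shifts which generators are needed for a given $\ka$, not the amplitude of $\fun{F}$ on each fixed generator, so the interval $[a,b]$ can be taken independent of $\ka$.

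I expect the main obstacle to be making the reduction to finitely many generators genuinely uniform: a priori the classical generation in Proposition \ref{prop:Ball1} only tells us that each object is built from the generators by \emph{finitely many} cones and summands, but the number of cones could grow with the object, and each cone can increase cohomological amplitude by one. The resolution is that one does not need the full thick closure: it is enough that an arbitrary coherent sheaf $\ka\in\Coh_{Z_1}(X_1)$ sits in a bounded filtration (of length bounded by $\dim Z_1 + 1$ plus the homological dimension contributed by the regular strata) with graded pieces among a fixed finite list of generators up to twist, and this follows from the structure theory over the perfect field $\K$ together with the fact that $\ka$ is an $\ko_{nZ_1}$-module for some $n$. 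For the quasi-coherent statement $\fun{G}$, one additionally uses that $\fun{G}$, being exact on $\Db_{Z_1}(\Qcoh(X_1))$, sends the canonical presentation of a quasi-coherent sheaf by the generators of $\cat{Amp}(Z_1,X_1,H)$ from Proposition \ref{prop:amp} to a complex whose cohomology is controlled by the (uniform) amplitude on those generators. Finally one notes that the conclusion for $\fun{F}$ follows from the one for $\fun{G}$ by composing with the inclusion $\Db_{Z_1}(X_1)\hookrightarrow\Db_{Z_1}(\Qcoh(X_1))$, since a perfect complex on $X_1$ supported on $Z_1$ lies in $\Db_{Z_1}(\Qcoh(X_1))$.
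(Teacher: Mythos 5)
Your reduction in the first paragraph, via Proposition~\ref{prop:Ball1}, to studying $\fun{F}\comp i_*\colon\Db(Z_1)\to\Db_{Z_2}(\Qcoh(X_2))$ is the same first step the paper takes, and it is correct. The real content, however, is the second step, and here your argument has a genuine gap. After stratifying $Z_1$ into regular pieces you produce a generating family $i_*(j_*\ko_W(\ell))$, but this family is \emph{infinite} (the twist $\ell$ runs over $\ZZ$), and you then assert that the cohomological amplitude of $\fun{F}$ on $i_*(j_*\ko_W(\ell))$ is independent of $\ell$. This is unjustified: $\fun{F}$ is an arbitrary exact functor, not one that intertwines twisting by an ample line bundle, so nothing forces $\fun{F}(i_*j_*\ko_W(\ell))$ and $\fun{F}(i_*j_*\ko_W(\ell'))$ to have comparable amplitude. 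That uniformity is precisely the nontrivial point, and it is what Rouquier's theorem on strong generation of $\Db(\Coh Z_1)$ over a perfect field \cite[Thm.~7.38--7.39]{R} provides: one fixed object $G$ (a finite direct sum) and a fixed number $n$ so that every coherent sheaf is obtained from $G$ in at most $n$ extension steps using only boundedly many shifts of $G$. Perfectness of $\K$ enters in the proof of that theorem (via a Frobenius argument) rather than merely in making regular stratifications behave well; your sketch does not recover the statement. The paper simply invokes \cite[Thm.~7.39]{R} as a black box, which is the right move.

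Two further problems. First, the attempt to handle the quasi-coherent case by writing sheaves as filtered colimits of generators from $\cat{Amp}(Z_1,X_1,H)$ and arguing that $\fun{G}$ ``commutes with the relevant colimits'' is not available: an arbitrary exact functor between bounded derived categories of quasi-coherent sheaves need not preserve filtered colimits, and $\Db_{Z_1}(\Qcoh(X_1))$ is not closed under arbitrary colimits anyway. The paper handles this case by the same $i_*$-reduction and the same appeal to Rouquier, not by a colimit argument. Second, the final sentence is logically reversed: an exact functor $\fun{F}$ defined only on $\Db_{Z_1}(X_1)$ need not extend to $\Db_{Z_1}(\Qcoh(X_1))$, so boundedness of $\fun{F}$ cannot be deduced from boundedness of $\fun{G}$ by ``composing with the inclusion''; the two statements in the lemma must be established in parallel, which is what the paper does.
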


\begin{proof}
To deal with the first part, observe that, by Proposition \ref{prop:Ball1}, the category $\Coh_{Z_1}(X_1)$ is generated, as an abelian category, by the image of the natural fully faithful functor $i_*\colon\Coh(Z_1)\mono\Coh_{Z_1}(X_1)$. This means that it is enough to show the boundedness of the functor
\[
\fun{F}':=\fun{F}\comp i_*\colon\Db(Z_1)\lto\Db_{Z_2}(\Qcoh(X_2)).
\]
Now this is a straightforward consequence of \cite[Thm.\ 7.39]{R} (here we need that $\K$ is perfect). Indeed, by this result, there exists a positive integer $d$ and a compact object $G$ in $\D(\Qcoh(Z_1))$ such that any object of $\Db(Z_1)$ is generated by $G$ in at most $d$ steps by taking cones, shifts, direct summands and finite direct sums. Hence $\fun{F}'$ is certainly bounded.

The second part, concerning exact functors between the bounded derived categories of quasi-coherent sheaves, is proved using the same argument. Indeed, one applies again Proposition \ref{prop:Ball1} and reduces to studying the boundness of the exact functor
\[
\fun{G}'\colon\Db(\Qcoh(Z_1))\lto\Db_{Z_2}(\Qcoh(X_2)).
\]
By \cite[Thm.\ 7.39]{R}, there exists a positive integer $d$ such that any object of $\Db(\Qcoh(Z_1))$ is generated by $G$ in at most $d$ steps by taking cones, shifts, direct summands, finite direct sums and arbitrary multiples of the same object (see \cite[Sect.\ 3.1.1]{R}). As $\fun{G}'$ commutes with arbitrary direct sums, this is enough to conclude.
\end{proof}

\subsection{The uniqueness of the Fourier--Mukai kernels}\label{subsect:supportunique}

Assume that the base field $\K$ is perfect and that $X_1$ and $X_2$
are smooth quasi-projective schemes containing projective subschemes
$Z_1$ and $Z_2$.  Consider $\ke\in\Db_{Z_1\times Z_2}(\Qcoh(X_1\times
X_2))$ and observe that, obviously,
$\ke\iso\iota_{1,2}^!(\widetilde{\ke})$, where
$\widetilde{\ke}:=\iota_{1,2}(\ke)\in\Db(\Qcoh(X_1\times X_2))$. We
get a Fourier--Mukai functor
\begin{equation}\label{eqn:Phi_i}
\Psi_{\ke}:=\FM{\ko_{\Delta}\boxtimes\widetilde{\ke}}\colon\D(\Qcoh(X_1\times X_1))\lto\D(\Qcoh(X_1\times X_2)).
\end{equation}
Let $\widetilde{\ki}:=\iota_{1,1}(\ki)\in\Db(\Qcoh(X_1\times X_1))$,
where $\ki$ is the complex defined in \eqref{eqn:kerid} (such that
$\FMS{\ki}\iso\id$ by Lemma \ref{lem:identity}). We first prove the
following result.

\begin{lem}\label{lem:unique1}
We have $\Psi_{\ke}(\widetilde{\ki})\iso\widetilde{\ke}$ and
$\Psi_{\ke}(\kf\boxtimes\kg)\iso
\kf\boxtimes\FM{\widetilde{\ke}}(\kg)$ for every
$\kf,\kg\in\D(\Qcoh(X_1))$.
\end{lem}

\begin{proof}
A standard computation shows that, for every $\kd\in
\D(\Qcoh(X_1\times X_1))$,
\[
\Psi_{\ke}(\kd)\iso\kd\kcomp\widetilde{\ke}:=
(p_{1,3})_*(p_{1,2}^*(\kd)\otimes p_{2,3}^*(\widetilde{\ke})),
\]
where $p_{i,j}$ denote the obvious projections from $X_1\times
X_1\times X_2$. The second assertion in the statement is then
clear. As for the first one, by Lemma \ref{lem:compat} we have
\[
\widetilde{\ki}=\iota_{1,1}\comp\iota_{1,1}^!\comp\Delta_*(\ko_{X_1})
\iso\Delta_*\comp\iota_1\comp\iota_1^!(\ko_{X_1}).
\]
Moreover, it is easy to see that
$\Delta_*(\kf)\kcomp\widetilde{\ke}\iso
p_1^*(\kf)\otimes\widetilde{\ke}$ for every $\kf\in\D(\Qcoh(X_1))$.
It follows that, using the same notation as in the proof of Lemma
\ref{lem:FMS},
\[
\Psi_{\ke}(\widetilde{\ki})\iso
p_1^*\comp\iota_1\comp\iota_1^!(\ko_{X_1})\otimes\widetilde{\ke}\iso
\bar{\iota}_1\comp\bar{\iota}_1^!(\ko_{X_1\times X_2})\otimes
\widetilde{\ke}\iso\bar{\iota}_1\comp\bar{\iota}_1^!(\widetilde{\ke})
\iso\widetilde{\ke},
\]
again by Lemma \ref{lem:compat}.
\end{proof}

We can use this to prove the following result which is precisely the uniqueness statement in Theorem \ref{thm:main2}.

\begin{prop}\label{prop:uniq}
Let $X_1$, $X_2$, $Z_1$ and $Z_2$ be as above and let
$\fun{F}\colon\Db_{Z_1}(X_1)\to\Db_{Z_2}(X_2)$ be an exact functor satisfying $\Cc$. Assume that there are $\ke_1,\ke_2\in\Db_{Z_1\times Z_2}(\Qcoh(X_1\times X_2))$ such that
\[
\fun{F}\iso\FMS{\ke_1}\iso\FMS{\ke_2}.
\]
Then $\ke_1\iso\ke_2$.	
\end{prop}

\begin{proof}
Since $\widetilde{\ki}$ is bounded, there exists a bounded above
complex
\[
\cp{\kl}:=\{\cdots\to\kl^j\to\kl^{j+1}\to\cdots\to\kl^n\to0\}\in\D(\Qcoh(X_1\times X_1))
\]
such that $\cp{\kl}\iso\widetilde{\ki}$ and, for any $j\in\ZZ$, the sheaf $\kl^j$ is of the form $P_j\boxtimes M_j$, where $P_j$ and $M_j$ are (possibly infinite) direct sums of sheaves in $\cat{Amp}(Z_1,X_1,H_1)$ (use Proposition \ref{prop:amp}).

Using again that $\widetilde{\ki}$ is a bounded complex, for $m>0$
sufficiently large, the stupid truncation in position $-m$
\[
\cp{\km}:=\{0\to\kl^{-m}\to\cdots\to\kl^n\to0\}
\]
of $\cp{\kl}$ is such that $\cp{\km}\iso\widetilde{\ki}\oplus\kk[m]$, for some $\kk\in\Qcoh(X_1\times X_1)$. Applying term by term the functor $\Psi_{\ke_i}$ in \eqref{eqn:Phi_i} to $\cp{\km}$ we get a complex of complexes
\[
\Psi_{\ke_i}(\kl^{-m})\lto\cdots\lto\Psi_{\ke_i}(\kl^n).
\]
Due to Lemma \ref{lem:unique1}, the choice of $m$ sufficiently large, the assumption $\Cc$ and Lemma \ref{lem:conv1}, this complex has a unique (up to isomorphism) right convolution
\[
\ka_i:=\widetilde{\ke}_i\oplus\kk_i[m],
\]
with $\kk_i=\Psi_{\ke_i}(\kk)\in\Db(\Qcoh(X_1\times X_2))$.

Applying Lemma \ref{lem:conv2} under the hypothesis $\Cc$, we get
$\ka_1\iso\ka_2$. By Lemma \ref{lem:bound} (here we use that $\K$ is
perfect), the functor $\Psi_{\ke_i}$ is bounded and so, for $m$ large
enough,
\[
\Hom(\widetilde{\ke}_1,\kk_2[m])\iso\Hom(\kk_1[m],\widetilde{\ke}_2)\iso0.
\]
Hence
$\iota_{1,2}(\ke_1)=\widetilde{\ke}_1\iso\widetilde{\ke}_2=\iota_{1,2}(\ke_2)$,
which is equivalent to $\ke_1\iso\ke_2$.
\end{proof}

\begin{remark}\label{rmk:orlov}
Following a suggestion of D.\ Orlov, we can show that if $X_1$ is a projective scheme such that $T_0(\ko_{X_1})=0$, $X_2$ is a scheme and
$\FM{\ke}\colon\Dp(X_1)\lto\Db(X_2)$ is an exact fully faithful functor, then $\ke\in\D(\Qcoh(X_1\times X_2))$ is uniquely determined (up to isomorphism).

Indeed, suppose that there exist $\kf\in\D(\Qcoh(X_1\times X_2))$ and an isomorphism $\FM{\kf}\iso\FM{\ke}$. Consider the quasi-functors
\[
\FMdg{\ke},\FMdg{\kf}\colon\Perf(X_1)\lto\Dg(X_2)
\]
corresponding to $\ke$ and $\kf$ under the bijection given by \cite[Thm.\ 8.9]{To}.
Let $\cat{C}\subseteq\Dg(\Qcoh(X_2))$ be the full dg subcategory whose objects are the same as those in the essential image of $\FM{\ke}$. If $\Psi^{\dg}\colon\cat{C}\to\Perf(X_1)$ is a quasi-functor which is the inverse of $\FMdg{\kf}$ in $\Hqe$, consider the composition
\[
\dgfun{F}:=\Psi^{\dg}\comp\FMdg{\ke}\colon\Perf(X_1)\lto\Perf(X_1)
\]
which has the property $\FM{\ko_{\Delta}}\iso\id\iso H^0(\dgfun{F})$. As in the proof of \cite[Cor.\ 9.13]{LO}, the dg quasi-functor $\dgfun{F}$ extends to $\dgfun{G}\colon\Dg(X_1)\lto\Dg(X_1)$.
On the other hand, by \cite[Thm.\ 8.9]{To}, there exists (a unique)
$\kg\in\D(\Qcoh(X_1\times X_1))$ such that
$\dgfun{G}\iso\FMdg{\kg}$. Hence
\[
\FM{\ko_{\Delta}}\iso H^0(\dgfun{F})\iso\FM{\kg}
\]
and, using for example
\cite[Thm.\ 1.2]{CS1}, we get $\kg\iso\ko_{\Delta}$. Therefore $\dgfun{G}\iso\id$ and so $\FMdg{\ke}\iso\FMdg{\kf}$. Applying again \cite[Thm.\ 8.9]{To}, we deduce $\ke\iso\kf$.

Notice that the proof above does not work if the functor $\FM{\ke}$ satisfies $\Cc$ in the introduction but it is not fully faithful. Nevertheless we expect the result to be true in this case as well.
\end{remark}


\bigskip

{\small\noindent {\bf Acknowledgements.} The authors are mostly grateful to the referee(s) for the insightful comments which allowed them to improve the quality of the paper in a substantial way. We  would like to thank Arend Bayer, Daniel Huybrechts, Emanuele Macr\`i, Dmitri Orlov and Pawel Sosna for comments on an early version of this paper. Part of this article was written while P.S.\ was visiting the Department of Mathematics of the University of Utah and the Institut Henri Poincar\'e in Paris whose warm hospitality is gratefully acknowledged. It is a pleasure to thank these institutions and the Istituto Nazionale di Alta Matematica for the financial support.}


\end{document}